\theoremstyle{plain}
\newtheorem{quest}{Question}[section]
\newtheorem{defin}[quest]{Definition}
\newtheorem{theorem}[quest]{Theorem}
\newtheorem{prop}[quest]{Proposition}
\newtheorem{corollary}[quest]{Corollary}
\newtheorem{lemma}[quest]{Lemma}
\newtheorem*{prop*}{Proposition}
\newtheorem*{defin*}{Definition}
\theoremstyle{remark}
\newtheorem{example}[quest]{Example}
\newtheorem{remark}[quest]{Remark}
\newcommand{\adef}{\begin{defin}}
\newcommand{\zdef}{\end{defin}}
\newcommand{\aproof}{\begin{proof}}
\newcommand{\zproof}{\end{proof}}
\begin{document}

\thanks{Grant 23/12916-1, S\~ao Paulo Research Foundation (FAPESP)} 
\author{Leandro Candido}
\address{Instituto de Ci\^encia e Tecnologia da Universidade federal de S\~ao Paulo, Av. Cesare Giulio Lattes, 1201, ZIP 12247-014 S\~ao Jos\'e dos Campos/SP, Brasil}\email{leandro.candido@unifesp.br}

\title{Kurzweil--Stieltjes integration on compact lines}
\author{Pedro L. Kaufmann}
\address{Instituto de Ci\^encia e Tecnologia da Universidade federal de S\~ao Paulo, Av. Cesare Giulio Lattes, 1201, ZIP 12247-014 S\~ao Jos\'e dos Campos/SP, Brasil}\email{plkaufmann@unifesp.br}
\email{}

\maketitle

\begin{abstract}
   We develop a version of the Kurzweil--Stieltjes integral on compact lines and establish its fundamental properties. For sufficiently regular integrators, we obtain convergence theorems and show that the presented integration process generalizes Lebesgue integration with respect to positive Radon measures. Additionally, we introduce a notion of derivation on compact lines which, when paired with the proposed integral, yields a formulation of the Fundamental Theorem of Calculus in this context. 
\end{abstract}

\tableofcontents


\section{Introduction}

Jaroslav Kurzweil, and independently Ralph Henstock (see \cite{kurzweil1957generalized} and \cite{henstock1968riemann}), discovered a simple way to modify the Riemann integral on the real line, obtaining a powerful analytical tool known nowadays as the \emph{Henstock--Kurzweil integral}. On one hand, this integral generalized the Lebesgue and Newton integrals in a user-friendly way, and on the other hand it presented remarkable features, notably several convergence properties, a very general Fundamental Theorem of Calculus, and closedness with respect to improper integration, a result known as Hake's Theorem. See any textbook on the subject for basic definitions and a more complete account of properties of this integral,  for example \cite{bartle2001modern}. The simplicity of its definition, allied with the wide range of properties, makes it  a useful asset in particular when dealing with generalized ordinary differential equations, see for example \cite{schwabik1992generalized} and \cite{monteiro2019kurzweil}. The feature of the Henstock-Kurzweil integral that perhaps contrast the most with the Lebesgue integral is that it is a \emph{nonabsolute} integral. This means that there are integrable functions $f$ such that $|f|$ is not integrable. The price payed is that we cannot define a norm as directly as in the space of Lebesgue integrable functions - where for instance the seminorm $\|f\| = \int |f|$ and variations of it turn out to be fundamental in the development of functional analysis and measure theory, with a wide range of applications.  

Although we can naturally generalize Henstock-Kurzweil integration for Euclidean domains in more dimensions - partitioning our domain is parallelograms with sides parallel to the coordinate axes, still under the control of a gauge, see \cite{lee2011henstock} - many desirable properties lack for the resulting integral. For instance, it was early noticed that two-dimensional Perron integrability (which is equivalent to Henstock--Kurzweil integrability) is not invariant by rotations, see \cite{kartak1955k}. Several attempts have been made to modify the definition of Henstock--Kurzweil for multidimensional domains in order to obtain better properties, for instance imposing regularity conditions to the parallelograms (\cite{jarnik1983mawhin}) or allowing other shapes to the partition elements (\cite{jarnik1985non},  \cite{bianconi2010triangle}). In his book \cite{pfeffer1993riemann}, Pfeffer analyses some conflicting properties when defining nonabsolute integrals on more than one dimension. 

Henstock-Kurzweil integration works better in one dimensional Euclidean space due to its compatible order. To illustrate this, consider the following classical example. Let $\sum a_n = a$ be a conditionally convergent series of real numbers, and consider $I_1,I_2,\dots$ a countable set of nondegenerate subintervals of $[0,1]$ with $I_1<I_2<\dots$. Let $f=\sum a_n\chi_{I_n}/\ell(I_n)$, where $\chi_{I_n}$ is the characteristic function of $I_n$, and $\ell(I_n)$ is its length. Then $f$ is Henstock--Kurzweil integrable on $[0,1]$, and the value of the integral is $a$. Now, since the series is conditionally convergent, it is a known fact that given an arbitrary real number $b$, there exists a permutation $\sigma$ of the natural numbers such that $\sum a_{\sigma(n)}=b$. It follows that  $\sum a_{\sigma(n)}\chi_{I_n}/\ell(I_n)$ is Henstock--Kurzweil integrable, with integral $b$. This indicates the importance of the order in which the pieces where $f$ has integral $a_n$ appear. This phenomenon is absent for the Lebesgue integral, since $f=\sum a_n\chi_{I_n}/\ell(I_n)$  is Lebesgue integrable if and only if $\sum a_n$ is unconditionally convergent.  

This motivates the investigation of nonabsolute integrals on more general spaces with compatible linear orders. A general ambient space would be the \emph{linearly ordered topological spaces}, which are spaces whose topology is induced by a linear order. In this work, we will investigate a Kurzweil type integral on linearly ordered topological spaces that are moreover Dedekind-complete and bounded. These spaces are also known as \emph{compact lines}. The idea of interpreting the Lebesgue integral of continuous functions using Riemann sums on compact lines has been recently explored in Victor Ronchim's PhD thesis \cite{ronchim2021study} with the objective of studying extension of $c_0(I)$-valued operators on $C(K)$, where $K$ is a compact line. Later on, the results were published in \cite{tausk2023} without using these integrals, but Ronchim's original ideas inspired the current work.   Basic examples of compact lines are the compact real intervals, and more generally bounded time scales, but the class is much wider. Since compact lines in general lack of algebraic structure, we cannot base our definition on Riemann sums of the form $\sum f(t_n)(x_{n-1}-x_n)$. Instead, we use sums of the form  $\sum f(t_n)(G(x_{n-1})-G(x_n))$, for appropriate real-valued functions $G$, giving rise to a Kurzweil--Stieltjes type of integral. As we shall see, this integral share many features with the Henstock--Kurzweil integral on the compact real interval, such as linearity, additivity, and key convergence properties. Additionally, we will show that when we pair our integral with a suitable derivation process that reminisce of the nabla derivative on time scales, a far-reaching Fundamental Theorem of Calculus is obtained. 

The remainder of this work is organized as follows. In Section \ref{sec:cpt lines}, we expose prerequisites on compact lines. In Section \ref{sec:KS integration} we introduce a Kurzweil--Stieltjes integration process on compact lines, and present some examples and basic properties of the resulting integral. Moreover, in this section we explore the connection between amenable integrators and Radon measures. In Section \ref{sec:Calculus}, we pair our integral with a suitable differentiation process and provide results related to the Fundamental Theorem of Calculus. In Section \ref{sec:convergence}, we obtain basic convergence theorems such as the Monotone Convergence Theorem and the Dominated Convergence Theorem. We also establish the relationship between our integral and the Lebesgue integral.

\section{Compact lines}\label{sec:cpt lines}





A \emph{compact line} is a nonempty set equipped with a linear order whose order topology is compact. We begin by recalling some basic properties and examples of compact lines relevant to our purposes, and refer the reader to \cite{engelking1989general}, \cite{steen1978counterexamples}, and \cite{cater2006simple} for a more detailed treatment of linearly ordered topological spaces. 

Compact lines are \emph{Dedekind-complete}, meaning that every nonempty subset has both a supremum and an infimum. In particular, a compact line $K$ has a least and a greatest element, which we denote by $0_K$ and $1_K$, respectively. Intervals in $K$ will be denoted following the usual notation for $\mathbb{R}$; for example, $(x, y] = \{ z \in K \mid x < z \leq y \}$. 

Dedekind-completeness also allows for a classification of the points in $K$ according to whether they are limit points or have immediate predecessors or successors, as we now describe. 

\begin{defin}[Classification of points]\label{def:classification}
    Let $K = [0_K, 1_K]$ be a compact line. A point $x \in [0_K, 1_K)$ is said to be \emph{right-dense} if every neighborhood of $x$ contains a point greater than $x$. Otherwise, we say that $x$ is \emph{right-isolated}, and we define  
    \[x^+ = \inf\{y \in K : y > x\} \in K.\]
    In this case, $x^+$ is called the \emph{successor} of $x$. Similarly, a point $x \in (0_K, 1_K]$ is said to be \emph{left-dense} if every neighborhood of $x$ contains a point less than $x$. Otherwise, we say that $x$ is \emph{left-isolated}, and we define  
    \[x^- = \sup\{y \in K : y < x\} \in K.\]
    We call $x^-$ the \emph{predecessor} of $x$. We always consider $0_K$ to be left-isolated and $1_K$ to be right-isolated.
\end{defin}

Two fundamental classes of compact lines that serve as building blocks for more elaborate constructions are \emph{ordinal intervals} and \emph{bounded time scales}. Ordinal intervals are compact lines of the form $[0,\alpha] = \{\kappa \mid \kappa \text{ is an ordinal and } \kappa \leq \alpha\}$, endowed with the order topology. This class allows us to construct compact lines of arbitrary cardinality.

Bounded time scales, on the other hand, are particularly interesting due to their simplicity and relevance in the theory of dynamic equations, see for instance \cite{bohner2001dynamic}. These are defined as the intersection of a nonempty closed subset $ \mathbb{T} \subset \mathbb{R} $ (called a \emph{time scale}) with a real nondegenerate interval $[a,b]$, provided the intersection, denoted $ [a,b]_\mathbb{T} $, is nonempty. Observe that bounded time scales are nothing but nonempty compact subsets of $ \mathbb{R}$.

We also point out that terminology in the literature on time scales often refers to right- and left-isolated points as \emph{right scattered} and \emph{left scattered}, respectively. We choose instead to adopt the standard topological terminology introduced in Definition~\ref{def:classification}, in part because the term ``scattered'' has a different meaning in general topology, and this could lead to ambiguity.

Once we have a given family of compact lines at our disposal, there are several ways to construct new ones. One simple method is to take a nonempty closed subset of a compact line, which remains a compact line under the induced order. Another important construction arises from the \emph{lexicographic product}: given two compact lines $ K $ and $ L $, the set $ K \times L $ endowed with the lexicographic order $ \leq_\mathrm{lex} $ becomes a compact line. This allows, for instance, the construction of $ \alpha $ copies of a compact line $ K $, arranged lexicographically, by considering
$([0,\alpha] \times K, \leq_\mathrm{lex})$.
As an illustrative aside, this kind of construction appears in the classical definition of \emph{long lines}, where one considers the lexicographic product $[0,\alpha] \times [0,1)$, with $\alpha$ a large ordinal, and removes the smallest point. See~\cite{steen1978counterexamples} for details.

Another notable class of examples is given by the so-called \emph{double-arrow spaces}. Given a compact line $ K $ and a subspace $ L \subset K $, the \emph{Aleksandrov double-arrow space} is defined as
\[
DA(K, L) = \left((K \times \{0\}) \cup (L \times \{1\}), \leq_{\mathrm{lex}}\right),
\]
where the space is endowed with the lexicographic order $ \leq_{\mathrm{lex}} $. In the particular case where both $ K $ and $ L $ are the closed unit interval $[0,1]$ with the usual order, the resulting space is commonly referred to as the \emph{split interval}.

 A remarkable result by Ostaszewski~\cite{ostaszewski1974characterization} asserts that every separable compact line -  hence, in particular, every time scale -  is order-isomorphic to a subspace of the split interval. This insight reinforces our belief that the integral framework developed in this paper may offer a useful perspective on nonabsolute integration theory, potentially applicable to both separable and non-separable settings.

\section{Kurzweil--Stieltjes integration}\label{sec:KS integration}


The development of a satisfactory nonabsolute integration theory on the real line \`a la Henstock-Kurzweil classically depends on three main ingredients: Cousin's Lemma (which guarantees a proper definition of the integral), Saks-Henstock's Lemma (which permits to have control on how integrable functions behave on elementary sets, despite the conditionally convergent aspect of the integral), and Vitali's Covering Theorem (a result of geometric nature used to obtain a very powerful Fundamental Theorem of Calculus). When passing to the more general Kurzweil--Stieltjes setting, the identification of bounded variation integrators with Lebesgue--Stieltjes measures also plays a central role.  It turns out that versions of these four results hold for an appropriate integral on compact lines, under reasonable conditions. In this section, we introduce such a Kurzweil--Stieltjes type integration process on compact lines, discuss its basic properties, and provide examples. In particular, we prove versions of the mentioned four fundamental results that allow us to further develop convergence theorems and calculus results in the following sections. 



\subsection{Gauges and partitions} 


We start by defining tagged partitions analogously as for the classical Henstock-Kurzweil integral. 

\begin{defin}[Tagged partitions]\label{def:partition}
    A \emph{tagged partition} $P$ of a compact line $K$ consists on a finite set of points in $K$, $\mathcal D=\{x_0,x_1,\dots,x_n\}$, with $0_K=x_0\leq x_1\leq \dots\leq x_n=1_K$, together with a set of points $\mathcal T=\{t_1,\dots,t_n\}$ such that $t_i\in [x_{i-1},x_i]$, for each $i\in \{1,\dots,n\}$. The elements of $\mathcal D$ are called \emph{division points}, while the elements of $\mathcal T$ are called \emph{tag points}. 
\end{defin}
Since all partitions considered will be tagged, we will omit this qualifier throughout. We will always represent a partition with $\mathcal{D} = \{x_0, x_1, \dots, x_n\}$ and $\mathcal{T} = \{t_1, \dots, t_n\}$, in the form  
\[P = \{([x_{i-1}, x_i], t_i) : 1 \leq i \leq n\}.\]
The pairs $([x_{i-1}, x_i], t_i)$, for $i = 1, \dots, n$, are referred to as the \emph{components} of the partition.

 The definition of gauge - which in the classical theory is a function from $[a,b]$ to $(0,\infty)$ -  can be easily adapted to compact lines once we observe that the job the gauge should perform is essentially assign to each point a neighbourhood of it. The definition of $\delta$-fine partition for a gauge $\delta$ is more subtle, in particular to guarantee their existence for an arbitrary gauge. 

\begin{defin}[Gauges, refinements and $\delta$-fine partitions]
    Let $K$ be a compact line. A \emph{gauge} on $K$ is a function $\delta$ from $K$ into the set of open intervals of $K$ such that $x\in\delta(x)$, for each $x\in K$. Given two gauges $\delta$ and $\eta$ on $K$, we say that $\eta$ \emph{refines} $\delta$ if $\eta(x)\subset \delta (x)$ for each $x\in K$.  Given a gauge $\delta$ on $K$, a partition $P =\{([x_{i-1},x_i],t_i):1\leq i \leq n\}$ of $K$ is said to be \emph{$\delta$-fine} when $(x_{i-1},x_i]\subset \delta(t_i)$, for all $i\in\{1,\dots,n\}$. We denote this relationship by $P \ll \delta$.

    When $I$ is a compact subinterval of $K$, it is clear that $\delta_I : I \to \mathbb{R}$, defined by $\delta_I(x) = \delta(x) \cap I$, is a gauge on $I$ (sometimes referred to as the induced gauge). Whenever $P$ is a partition of $I$, we will simply say that $P$ is $\delta$-fine instead of $\delta_I$-fine, as the context makes the meaning clear and prevents any confusion.
\end{defin}

Some remarks are in order. 
\begin{enumerate}[label=\textnormal{(P\alph*)}]
    \item\label{it:Partition1} By an open interval in $K$, we mean an interval that is also an open subset of $K$. For example, if $x\in (0_K,1_K]$, then $[0_K, x)$ is an open interval. It is worth noting that an interval may admit multiple representations in terms of its endpoints. For instance, if $x \in (0_K, 1_K]$ is a left-isolated point in a compact line $K$ (consider, for example, the ordinal space $[0, \omega]$, which contains infinitely many left-isolated points), then the interval $[x, 1_K]$ can equivalently be written as $(x^-, 1_K]$.
    \item\label{it:Partition2} We formally allow the possibility that $x_{n-1} = t_n = x_n$ in a partition. This does not affect $\delta$-fineness, as $(x_{i-1}, x_i] = \emptyset \subset \delta(t_i)$ in such cases. We do not require $x_{i-1} < x_i$ to accommodate trivial $\delta$-fine partitions, especially in one-point compact lines. Moreover, it is sometimes convenient to insert tags and division points to enlarge a partition. For instance, a component $([x_{i-1}, x_i], t_i)$ may be split into two components, $([x_{i-1}, x'_i], t_i)$ and $([x'_{i-1}, x_i], t_i)$, where $x'_i = t_i = x'_{i-1}$, possibly with $x'_{i-1} = x_i$. 
    
    \item\label{it:Partition3} In classical Henstock-Kurzweil integration theory, one requires that $[x_{i-1}, x_i] \subset (t_i - \delta(t_i), t_i + \delta(t_i))$, which, in the notation used above, corresponds to $[x_{i-1}, x_i] \subset \delta(t_i)$. Our choice of $(x_{i-1}, x_i]$ instead of $[x_{i-1}, x_i]$ is intended to ensure the existence of $\delta$-fine partitions for an arbitrary gauge $\delta$. This is the content of the following lemma.
 
\end{enumerate}


\begin{lemma}[Cousin]\label{lem:cousin}
    Let $\delta$ be a gauge on a compact line $K$. Then $K$ admits a $\delta$-fine partition. 
\end{lemma}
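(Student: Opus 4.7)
The plan is to mimic the classical proof of Cousin's Lemma for a compact real interval, replacing the least upper bound property of $\mathbb{R}$ by the Dedekind-completeness of the compact line $K$ noted at the beginning of Section~\ref{sec:cpt lines}. Concretely, I would let
\[
S \;=\; \{\,x \in K \;:\; [0_K, x] \text{ admits a $\delta$-fine partition}\,\}.
\]
Since the trivial partition $\{([0_K,0_K],0_K)\}$ is permitted by remark~\ref{it:Partition2}, we have $0_K \in S$, so $S$ is nonempty, and Dedekind-completeness yields $s := \sup S \in K$. The proof then reduces to showing that $s \in S$ and that $s = 1_K$.

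To show $s \in S$, I would split on the classification of $s$ from Definition~\ref{def:classification}. The case $s = 0_K$ is immediate. If $s$ is left-dense, then $\delta(s)$ is an open interval containing $s$, hence a neighborhood of $s$, so it contains some $y < s$; since $\delta(s)$ is an interval, $(y, s] \subset \delta(s)$. By the defining property of the supremum, there exists $z \in S \cap (y, s]$. If $z = s$, we are done; otherwise, gluing a $\delta$-fine partition of $[0_K, z]$ with the extra component $([z, s], s)$ produces a $\delta$-fine partition of $[0_K,s]$, since $(z, s] \subset (y, s] \subset \delta(s)$. If $s$ is left-isolated, any $w \in S$ with $w > s^-$ --- which exists because $s^- < s = \sup S$ --- must satisfy $w = s$, as $(s^-, s) = \emptyset$; hence $s \in S$.

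To show $s = 1_K$, I would argue by contradiction, assuming $s < 1_K$ and producing an element of $S$ strictly greater than $s$. If $s$ is right-dense, then $\delta(s)$ contains some $y > s$; appending the component $([s, y], s)$ to a $\delta$-fine partition of $[0_K, s]$ yields a $\delta$-fine partition of $[0_K, y]$, since $(s, y] \subset [s, y] \subset \delta(s)$, placing $y \in S$. If $s$ is right-isolated, then $s^+$ exists and one instead appends $([s, s^+], s^+)$; the fineness requirement $(s, s^+] \subset \delta(s^+)$ collapses to $s^+ \in \delta(s^+)$, which holds by the very definition of a gauge. In either case, some $y > s$ lies in $S$, contradicting $s = \sup S$.

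The main conceptual point, rather than any single hard step, is that the asymmetric convention $(x_{i-1}, x_i] \subset \delta(t_i)$ singled out in remark~\ref{it:Partition3} is doing real work here: it is precisely what allows the right-isolated extension to go through with the tag $s^+$ even when $\delta(s)$ does not reach past $s$. The remaining care is in keeping the case analysis exhaustive, invoking the left-classification of $s$ to show $s \in S$ and the right-classification to show $s = 1_K$.
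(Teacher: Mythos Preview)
Your proof is correct and follows essentially the same supremum argument as the paper's: define $S$, show its supremum $s$ lies in $S$ via the left-classification of $s$, then push $s$ up to $1_K$ via the right-classification. The only cosmetic differences are that you seed $S$ with the trivial partition at $0_K$ (whereas the paper separately exhibits a point strictly above $0_K$ in $S$), and your left-isolated step argues directly that some $w\in S$ with $w>s^-$ must equal $s$, rather than first placing $s^-$ in $S$ and extending by $([s^-,s],s)$ as the paper does.
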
    
 \begin{proof}
We assume that $0_K < 1_K$, as the conclusion is trivial in the opposite case. Consider the set $S = \{x \in (0_K,1_K] : [0_K, x] \text{ admits a } \delta\text{-fine   partition} \}$. First, we observe that $S \neq \emptyset$. Indeed, if $0_K$ is a right-isolated point, then the partition $P = \{([0_K, 0_K^+], 0_K^+)\}$ is a $\delta$-fine   partition for $[0_K, 0_K^+]$. On the other hand, if $0_K$ is right-dense, then there exists $x \in \delta(0_K) \cap (0_K, 1_K]$, and in this case, the partition $P = \{([0_K, x], 0_K)\}$ is a $\delta$-fine   partition for $[0_K, x]$.  

Since $S$ is nonempty and bounded above by $1_K$, we may define  $y = \sup S \in K$. If $y$ is left-isolated, let $y^-$ be its predecessor. If $y^- = 0_K$, then by the argument above, we have $y \in S$. If $y^- > 0_K$, then $y^- \in S$, for otherwise, $y^-$ would be an upper bound of $S$ strictly smaller than $\sup S = y$, contradicting the definition of $y$.  
Let $P$ be a $\delta$-fine   partition of $[0_K, y^-]$. Then, defining  $P' = P \cup \{([y^-, y], y)\}$
yields a $\delta$-fine   partition of $[0_K, y]$, so that $y \in S$. If $y$ is left-dense, then there exists $x \in [0_K, y) \cap \delta(y) \cap S$. Given a $\delta$-fine   partition $P$ of $[0_K, x]$, we define $P' = P \cup \{([x, y], y)\}$. This forms a $\delta$-fine   partition of $[0_K, y]$, ensuring that $y \in S$. In all cases, we conclude that $y \in S$.  

To complete the proof, we must show that $y = 1_K$. Assume, for contradiction, that $y < 1_K$, and let $P$ be a $\delta$-fine   partition for $[0_K, y]$.  If $y$ is right-isolated, let $y^+$ be its successor. Then the partition $P' = P \cup \{([y, y^+], y^+)\}$ is a $\delta$-fine   partition for $[0_K, y^+]$, implying that $y^+ \in S$, which contradicts the definition of $y$ as the supremum of $S$.  If $y$ is right-dense, we can choose $z \in (y, 1_K] \cap \delta(y)$. Then the partition  $P' = P \cup \{([y, z], y)\}$
is a $\delta$-fine   partition for $[0_K, z]$, implying that $z \in S$, which again contradicts the definition of $y$.  Thus, we must have $y = 1_K$, completing the proof.
\end{proof}

Before concluding this subsection, we introduce some additional terminology and record a few elementary facts about $\delta$-fine partitions, which will be used implicitly throughout the text.

\begin{enumerate}[label=\textnormal{(G\alph*)}]
\item\label{it:Gauge1} It is easily verified that if $\eta$ refines $\delta$, then every $\eta$-fine partition $P$ is also $\delta$-fine; in other words, $P \ll \eta$ implies $P \ll \delta$.

\item\label{it:Gauge2} Since the topology on our compact line is Hausdorff, given a gauge $\delta$, we can refine it to a gauge $\eta$ that \emph{exposes} a point $c \in K\setminus\{0_K\}$ in the sense that $x \neq c \Rightarrow c \notin \eta(x)$. One way to construct such a gauge is by defining $\eta(x) = \delta(x) \cap [0_K, c)$ for $x < c$, $\eta(x) = \delta(x) \cap (c, 1_K]$ for $x > c$, and $\eta(c) = \delta(c)$. This construction is useful because if a gauge $\eta$ exposes a point $c \in K$, then for every $\eta$-fine partition $P = \{([x_{i-1},x_i],t_i):1\leq i \leq n\}$ of $K$, we have that $c \in (x_{i-1}, x_i]$ implies $t_i = c$. In particular, $c$ must appear as a tag in $P$. Moreover, any finite set of points can be simultaneously exposed by iteratively applying this construction to each point in the set.

\item\label{it:Gauge3} If $P = \{([x_{i-1},x_i],t_i): 1 \leq i \leq n\}$ is a   partition of $[0_K, c]$, and $Q = \{([y_{j-1},y_j],s_j): 1 \leq j \leq m\}$ is a   partition of $[c, 1_K]$, then clearly $x_n = c = y_0$. We define the \emph{merging} of $P$ and $Q$ as  
\[\widehat{P \cup Q} = \{([z_{k-1},z_k],p_k): 1 \leq k \leq m+n\},\]  
where
\[([z_{k-1}, z_k], p_k) = 
\begin{cases}
([x_{k-1}, x_k], t_k), & \text{if } k \leq n, \\
([y_{k-1-n}, y_{k-n}], s_{k-n}), & \text{if } k > n,
\end{cases}\]
which constitutes a   partition of $K$. Given a gauge $\delta$ on $K$, it is straightforward to verify that $\widehat{P \cup Q}$ is $\delta$-fine if and only if $P$ is a $\delta$-fine   partition of $[0_K, c]$ and $Q$ is a $\delta$-fine   partition of $[c, 1_K]$. It is straightforward to extend the merging operation to three or more partitions in an analogous manner.
\end{enumerate}

\subsection{Riemann sums and integrability} 

\begin{defin}[Riemann sums and integrability]\label{def:integrability}
    Let $K$ be a compact line, $P=\{([x_{i-1},x_i],t_i): 1 \leq i \leq n\}$ be a partition of $K$, and $f,G:K\to \mathbb R$.  The \emph{Riemann sum of $f$ with respect to $P$ and $G$} is defined and denoted by
\begin{eqnarray}\label{eq:riemann}
S(f,G,P)=f(0_K)G(0_K)+\sum_{i=1}^n f(t_i)(G(x_{i})-G(x_{i-1})). 
\end{eqnarray}
We say that $f$ is \emph{Kurzweil--Stiletjes integrable with respect to $G$}, or \emph{$G$-integrable} for short, when there is an $A\in \mathbb R$ such that, for each $\varepsilon>0$, there exists a gauge $\delta$ on $K$ such that 
\begin{eqnarray}\label{eq:int condition}
P\ll \delta \Rightarrow |S(f,G,P)-A|<\varepsilon. 
\end{eqnarray}
In the affirmative case, we call $A$ the \emph{Kurzweil--Stieltjes integral of $f$} with respect to $G$ (or \emph{$G$-integral of $f$}) over $K$, and denote it by $\int_K f\, dG$ or $\int_{0_K}^{1_K} f\, dG$. A gauge satisfying \eqref{eq:int condition} is called an \emph{$\varepsilon$-gauge} (with respect to a $G$-integrable function $f$). 
\end{defin}

\begin{remark}\label{rem:CompareNabla}
    The term $f(0_k)G(0_K)$ in \eqref{eq:riemann} does not appear in the classic definition of Kurzweil-Stieltjes integration on compact real intervals (see \cite[Definition 6.1.1]{monteiro2019kurzweil}). The measure-theoretic motivation for considering this term will be explored in Subsection \ref{subsec:Radon} and especially highlighted in Theorem \ref{Thm:CharacterizationGRadon}, where a clear connection with Lebesgue integrability is established. 

    The integral defined above turns out to generalize the Kurzweil-Stieltjes nabla integral on time scales (and consequently the classic Kurzweil-Stieltjes integral on compact real intervals) modulo the correcting factor $f(0_K)G(0_K)$. We postpone the proof of this fact to the Appendix. 
\end{remark}

\begin{remark}\label{Rem:EnlargePartitions}
Let $P = \{([x_{i-1}, x_i], t_i) : 1 \leq i \leq n\}$ be a tagged partition of a compact line $K$, and suppose we are given a collection of points $c_1, \ldots, c_k \in K$ that are exposed in $P$, see \ref{it:Gauge2}. It will occasionally be convenient to refine the partition $P$ so that each point $c_j$ appears as a division point of $P$, or as the tag of two consecutive components of $P$. 

To achieve this, we split any component $([x_{i-1}, x_i], t_i)$ such that $t_i = c_j$ into two components: $([x_{i-1}, c_j], c_j)$ and $([c_j, x_i], c_j)$. The resulting partition is clearly still $\delta$-fine and does not alter the Riemann sum of $f$ with respect to $P$ and $G$, since
\[f(c_j)(G(x_i) - G(x_{i-1})) = f(c_j)(G(x_i) - G(c_j)) + f(c_j)(G(c_j) - G(x_{i-1})).\]
We may also perform the reverse operation when needed, merging adjacent components with the same tag in order to maintain a partition of the form $x_0 < x_1 < \ldots < x_n$, see \ref{it:Partition2}.
\end{remark}

The following basic result is an adaptation to our integration framework of a theorem bearing the same name in the classical Kurzweil--Stieltjes theory. It provides a useful criterion for establishing the integrability of a function in situations where no candidate value for the integral is known. In particular, it will allow us to identify a broad class of integrable functions under certain assumptions on $G$.

\begin{theorem}[Bolzano-Cauchy Criterion]\label{Thm:BolzanoCauchy}
On a compact line $K$, a function $f:K\to \mathbb{R}$ is integrable with respect to some function $G:K\to \mathbb{R}$ if and only if for every $\varepsilon>0$, there exists a gauge $\delta$ such that
\[|S(f,G, P)-S(f,G,Q)|<\varepsilon\]
for every pair of $\delta$-fine partitions $P$ and $Q$ of $K$.
\end{theorem}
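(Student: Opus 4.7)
The Bolzano--Cauchy criterion splits into two implications; the forward direction (necessity) is routine, so let me sketch it first and then focus on the converse. If $f$ is $G$-integrable with integral $A$ and $\varepsilon>0$ is given, I take an $(\varepsilon/2)$-gauge $\delta$ and note that for any two $\delta$-fine partitions $P,Q$ of $K$ the triangle inequality yields $|S(f,G,P)-S(f,G,Q)|\le |S(f,G,P)-A|+|A-S(f,G,Q)|<\varepsilon$.

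For the converse, the plan is the classical diagonal construction: build a Cauchy sequence of Riemann sums whose limit is forced to be the integral. Applying the Cauchy hypothesis with $\varepsilon=1/n$ produces gauges $\delta_n$ on $K$. To be able to compare partitions across different $n$, I replace $\delta_n$ by $\eta_n(x)=\bigcap_{k\le n}\delta_k(x)$. Each $\eta_n(x)$ is a finite intersection of open intervals containing $x$, hence an open interval containing $x$, so $\eta_n$ is a gauge on $K$, and by construction $\eta_n$ refines $\delta_k$ for every $k\le n$. By Cousin's Lemma (Lemma~\ref{lem:cousin}) I can choose, for each $n$, a partition $P_n$ with $P_n\ll\eta_n$.

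Now for $m\ge n$ the partition $P_m$ is $\eta_m$-fine, hence $\delta_n$-fine by \ref{it:Gauge1}; likewise $P_n\ll\delta_n$. The Cauchy hypothesis then gives $|S(f,G,P_m)-S(f,G,P_n)|<1/n$, so $(S(f,G,P_n))_n$ is a Cauchy sequence in $\mathbb R$ and converges to some $A\in\mathbb R$. To see that $A$ is the $G$-integral of $f$, fix $\varepsilon>0$, choose $N$ with $1/N<\varepsilon$, and consider the gauge $\delta_N$. For any $P\ll\delta_N$ and any $n\ge N$ I have $P_n\ll\delta_N$ as well, so the Cauchy hypothesis yields $|S(f,G,P)-S(f,G,P_n)|<1/N$. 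Letting $n\to\infty$ gives $|S(f,G,P)-A|\le 1/N<\varepsilon$, so $\delta_N$ is an $\varepsilon$-gauge for $f$ with respect to the value $A$, completing the proof.

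The only slightly delicate point is checking that the refined gauges $\eta_n$ really are gauges in the sense of this paper (open-interval valued), but this is immediate because finite intersections of open intervals of $K$ containing a common point $x$ are again open intervals containing $x$. No technical obstacle remains; the argument is essentially the standard Cauchy criterion for Henstock--Kurzweil integrals, with Cousin's Lemma on $K$ replacing its classical real-interval counterpart.
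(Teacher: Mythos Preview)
Your proof is correct and follows essentially the same approach as the paper's: both directions are handled identically, with the converse built by refining the gauges $\delta_n$ to a nested sequence (you do this explicitly via $\eta_n=\bigcap_{k\le n}\delta_k$, the paper just asserts one may assume $\delta_{n+1}$ refines $\delta_n$), invoking Cousin's Lemma to pick $P_n\ll\eta_n$, and passing to the limit of the resulting Cauchy sequence of Riemann sums. The only cosmetic difference is in the final estimate, where you let $n\to\infty$ in $|S(f,G,P)-S(f,G,P_n)|<1/N$ while the paper uses the triangle inequality with the bound $1/n+1/n$.
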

\begin{proof}
Let $f$ be integrable with respect to $G$, fix $\varepsilon>0$, and choose $\delta$ to be a $\frac{\varepsilon}{2}$-gauge for $f$.
Then, given that $P$ and $Q$ are $\delta$-fine partitions of $K$, we have:
\[|S(f,G,P)-S(f,G,Q)|\leq |S(f,G,P)-\int f dG|+|\int f dG-S(f,G,Q)|<\varepsilon.\]

On the other hand, for each $n$, let $\delta_n$ be a gauge on $K$ such that 
\[P,Q\ll \delta_n\Rightarrow |S(f,G,P)-S(f,G,Q)|<1/n.\]
    
 We can assume that $ \delta_{n+1}$ refines $\delta_n$, for each $n\in \mathbb{N}$. Select a partition $P_n\ll \delta_n$ for each $n$, and observe that, for $m>n$, we have
    \begin{eqnarray*}
    |S(f,G,P_n)-S(f,G,P_m)|< 1/n.
    \end{eqnarray*}
  Thus $(S(f,G,P_n))_n$ comprises a Cauchy sequence in $\mathbb{R}$ and hence converges to some real number $A$. By passing to the limit as $m\to\infty$ on the previous relation, we obtain 
  \[|S(f,P_n,G)-A|\leq 1/n,\]
  for all $n$. To complete the proof, let $\varepsilon>0$ be arbitrary and fix a natural number $n>2/\varepsilon$. Then, for each $P\ll \delta_n$, one has
\begin{align*}
    |S(f,G,P)-A| &\leq |S(f,G,P)-S(f,G,P_n)| - |S(f,G,P_n)-A|\\
    &\leq 1/n + 1/n <\varepsilon, 
\end{align*}
which concludes the proof.  
\end{proof}

\begin{remark}\label{Rem:EpsilonGauge2}
Taking in account Theorem \ref{Thm:BolzanoCauchy} and aiming the simplification of proofs, we can strengthen the definition of $\varepsilon$-gauge as follows. If $f$ is integrable over a compact line $K$ with respect to some function $G$, for every $\varepsilon > 0$, when we say that  $\delta$ is an $\varepsilon$-gauge for $f$, we will assume that
\begin{enumerate}
    \item $\left|S(f, G, P)-\int_K f\,dG\right|<\varepsilon$, provided that $P$ is a $\delta$-fine   partition of $K$, and
    \item $|S(f, G, P) - S(f, G, Q)| < \varepsilon$ whenever $P, Q \ll \delta$.
\end{enumerate} 
\end{remark}

\begin{lemma}\label{Lem:UniformContinuity}
Let $K \subset \mathbb{R}$ be a compact line, and let $f: K \to \mathbb{R}$ be a continuous function. Then, for every $\varepsilon > 0$, there exists a division $\{ z_0, z_1, \ldots, z_n\}$ of $K$ such that for each $i = 1, \dots, n$, either:
\begin{itemize}
    \item $z_i$ is left-isolated, or
    \item for all $x, y \in [z_{i-1}, z_i]$, we have $|f(x) - f(y)| < \varepsilon$.
\end{itemize}
\end{lemma}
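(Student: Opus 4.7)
The plan is to adapt the proof of Cousin's Lemma (Lemma~\ref{lem:cousin}) via a supremum argument. Define
\[S = \{z \in K : [0_K, z] \text{ admits a division satisfying the stated conditions}\},\]
and observe that $0_K \in S$ trivially (the singleton division vacuously satisfies the requirement). Let $y = \sup S$, which exists by Dedekind completeness of $K$. The goal is to establish both $y \in S$ and $y = 1_K$; together these yield the conclusion.

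To prove $y \in S$, I would distinguish two cases. If $y$ is left-isolated, then $(y^-, y) \cap K = \emptyset$, so any $s \in S$ with $s > y^-$ (which exists by the sup property, since $y^- < \sup S$) must equal $y$, yielding $y \in S$. If $y$ is left-dense, continuity of $f$ at $y$ provides an open interval $V$ of $K$ containing $y$ on which the oscillation of $f$ is strictly less than $\varepsilon$ (obtained by taking $f^{-1}((f(y) - \varepsilon/3, f(y) + \varepsilon/3))$ and extracting an open interval around $y$ inside it). Left-density lets us pick $v \in V$ with $v < y$, and the sup property produces $z \in S$ with $v \leq z \leq y$; either $z = y$ and we are done, or we append $y$ to a division witnessing $z \in S$, in which case the new piece $[z, y]$ lies inside $V$ because $V$ is an interval containing both endpoints, giving the required oscillation bound.

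To prove $y = 1_K$, suppose for contradiction that $y < 1_K$. If $y$ is right-isolated, then $y^+$ is left-isolated by definition, so appending $y^+$ to a division witnessing $y \in S$ yields a valid division of $[0_K, y^+]$, contradicting $y = \sup S$. If $y$ is right-dense, the analogous argument using an open interval $V \ni y$ of small oscillation produces some $w \in V$ with $w > y$, and appending $w$ again contradicts maximality. The most delicate step is the left-dense case of the previous paragraph: one must select $V$ as a genuine open \emph{interval} in $K$, not merely an open neighborhood, so that the appended segment $[z, y]$ stays inside $V$ and the oscillation estimate applies. Note that only pointwise continuity of $f$ is used, so the hypothesis $K \subset \mathbb{R}$ is not essential for this approach; it does however permit a more explicit alternative proof via uniform continuity, where one recursively picks $z_i = \max(K \cap [z_{i-1}, z_{i-1} + \delta])$ for a uniform-continuity modulus $\delta$, jumping to $z_{i-1}^+$ in the degenerate case when that set is $\{z_{i-1}\}$, with termination following from compactness.
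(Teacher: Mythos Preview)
Your argument is correct and takes a genuinely different route from the paper. The paper proceeds by a direct compactness argument: for each $w\in K$ it chooses a closed-interval neighborhood $J_w\subset f^{-1}\bigl((f(w)-\varepsilon/2,\,f(w)+\varepsilon/2)\bigr)$, extracts a finite subcover $J_{w_1},\dots,J_{w_m}$, and declares the endpoints of these intervals to be the division points; it then checks that each subinterval $[z_{i-1},z_i]$ with $z_i$ left-dense sits inside one of the $J_{w_k}$. You instead re-run the Cousin--Lemma machinery, working with the supremum of the set of points up to which a good division exists. Your approach has the advantage of making explicit that the lemma rests on exactly the same order-completeness principle as Lemma~\ref{lem:cousin}, and, as you observe, it shows that the hypothesis $K\subset\mathbb{R}$ plays no role (in fact the paper's proof does not use it either). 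The paper's finite-subcover argument, on the other hand, is the more traditional packaging of a uniform-continuity-type statement and yields the division in one stroke rather than by induction along $K$. Your closing alternative sketch via a uniform-continuity modulus in $\mathbb{R}$ is plausible but would need a short termination argument (every two steps advance by at least $\delta$); the main supremum proof needs no such patching.
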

\begin{proof}
Let $\varepsilon > 0$ be arbitrary. For each $w \in K$, consider a closed interval (in the sense that contains its endpoints) $J_w \subseteq f^{-1}\left((f(w) - \frac{\varepsilon}{2}, f(w) + \frac{\varepsilon}{2})\right)$ that is also a neighborhood of $w$. The collection  $\mathcal{C} = \{ J_w : w \in K \}$ forms an open cover of $K$ when considering the interiors of the intervals $J_w$. Since $K$ is compact, there exist finitely many points $w_1 < w_2 < \cdots < w_m$ such that  $K \subseteq J_{w_1} \cup J_{w_2} \cup \cdots \cup J_{w_m}$.
We may assume that $m$ is the smallest integer for which such a sequence exists.

We use the endpoints of the intervals $J_{w_1}, \dots, J_{w_m}$ to define a division $\{z_0, z_1, \ldots, z_n\}$ of $K$. Let $1 \leq i \leq n$ be arbitrary. If $z_i$ is left-dense, let $1 \leq k \leq m$ be the smallest integer such that $z_i \in J_{w_k}$. If $z_i = 0_{J_{w_k}}$, then $z_{i-1} = z_i^-$, implying that $z_i$ is left-isolated, which contradicts the assumption. Therefore, the interval 
$[z_{i-1}, z_i] \subset J_{w_k} \subset f^{-1}\left( (f(w_k) - \frac{\varepsilon}{2}, f(w_k) + \frac{\varepsilon}{2}) \right)$. Now, for any $x, y \in [z_{i-1}, z_i]$, we have:
\[|f(x) - f(y)| \leq |f(x) - f(w_i)| + |f(w_i) - f(y)| < \frac{\varepsilon}{2} + \frac{\varepsilon}{2} = \varepsilon.\]
This completes the proof.
\end{proof}

Following the notation of \cite{ronchim2021study} that extends the corresponding notion for time scales, we say that on a compact line $K$, a function $G:K\to \mathbb{R}$ is said to have \emph{bounded variation} if its \emph{total variation},
\[\operatorname{Var}(G, K) := \sup \left\{ \sum_{i=1}^n |G(x_i) - G(x_{i-1})| : \{x_0, x_1, \ldots, x_n\} \text{ is a division of } K \right\},\]
is finite. When the domain of $G$ is clear from the context, we will simply write $\operatorname{Var}(G)$.

The following theorem provides a rich class of $G$-integrable functions, assuming that $G$ is of bounded variation.

\begin{theorem}\label{Thm:ContinuousIsGintegrable}
Let $K$ be a compact line, and let $G: K \to \mathbb{R}$ be a function of bounded variation. Then every continuous function 
$f: K \to \mathbb{R}$ is $G$-integrable. Moreover, the following inequality holds:
\[\left|\int_K f \, dG\right| \leq  |f(0_K) G(0_K)|+\sup_{x \in K} |f(x)| \,\operatorname{Var}(G).\]
\end{theorem}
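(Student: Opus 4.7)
My plan is to apply the Bolzano-Cauchy criterion (Theorem \ref{Thm:BolzanoCauchy}): for each $\varepsilon>0$ I will construct a gauge $\delta$ such that any two $\delta$-fine partitions of $K$ produce Riemann sums within $\varepsilon$ of each other. The inequality for the integral will then follow by bounding a generic Riemann sum directly from \eqref{eq:riemann} and passing to the limit.

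Fix $\varepsilon>0$, write $V=\mathrm{Var}(G)$, and set $\eta = \varepsilon/(V+1)$. I will invoke Lemma \ref{Lem:UniformContinuity} (whose proof, being purely compactness-based, extends verbatim to an arbitrary compact line $K$) to produce a division $\{z_0,\ldots,z_m\}$ such that, for each $i$, either $z_i$ is left-isolated or $|f(x)-f(y)|<\eta$ for all $x,y\in[z_{i-1},z_i]$. Then let $\delta$ be a gauge on $K$ that exposes every $z_j$ with $j\geq 1$ in the sense of \ref{it:Gauge2}. Given $\delta$-fine partitions $P,Q$, the exposition property forces each $z_j$ to appear as a tag of the unique component whose half-open interval contains it, so the splitting operation of Remark \ref{Rem:EnlargePartitions} lets me refine $P$ and $Q$ (without changing their Riemann sums) to $\delta$-fine partitions in which every $z_j$ is a division point. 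Writing the difference $S(f,G,P)-S(f,G,Q)=\sum_{i=1}^m \Delta_i$, where $\Delta_i$ is the difference of the partial Riemann sums on $[z_{i-1},z_i]$, reduces the problem to estimating each $\Delta_i$ separately.

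For $i$ with $z_i$ left-isolated I will observe that $z_{i-1}=z_i^-$ (since $z_{i-1}<z_i$ are now consecutive division points), so $[z_{i-1},z_i]=\{z_{i-1},z_i\}$ and both $P$ and $Q$ contain the same unique nontrivial component $([z_{i-1},z_i],z_i)$ — the tag being forced by the exposition of $z_i$ — giving $\Delta_i=0$. For $i$ with $z_i$ left-dense, I will take a common refinement $u_0=z_{i-1}<u_1<\cdots<u_r=z_i$ of the division points of $P$ and $Q$ inside $[z_{i-1},z_i]$; telescoping inside each original component rewrites the two partial sums as $\sum_p f(\tau^P_p)(G(u_p)-G(u_{p-1}))$ and $\sum_p f(\tau^Q_p)(G(u_p)-G(u_{p-1}))$ for tags $\tau^P_p,\tau^Q_p\in[z_{i-1},z_i]$, and the oscillation bound yields $|\Delta_i|\leq \eta\cdot\mathrm{Var}(G,[z_{i-1},z_i])$. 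Additivity of total variation over $\{z_0,\ldots,z_m\}$ then delivers $|S(f,G,P)-S(f,G,Q)|\leq \eta V<\varepsilon$.

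The main obstacle I anticipate is the left-isolated case: one must verify that exposition of the $z_j$ rigidly forces both the division-point structure and the choice of tag so that $P$ and $Q$ agree exactly on those intervals, since a stray tag could destroy the argument there. The remaining analytic ingredient — controlling the difference of two Riemann sums on an interval of small $f$-oscillation by the variation of $G$ on that interval — is standard once the reduction is set up. Finally, since $|S(f,G,P)|\leq |f(0_K)G(0_K)|+\sup_K|f|\cdot V$ holds for every partition, and any $\varepsilon$-gauge places $\int_K f\,dG$ within $\varepsilon$ of some such Riemann sum, the stated estimate follows by letting $\varepsilon\downarrow 0$.
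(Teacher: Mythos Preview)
Your overall strategy coincides with the paper's: invoke Lemma~\ref{Lem:UniformContinuity}, build a gauge that exposes the division points $z_j$, pass to a common refinement of two $\delta$-fine partitions, and control the difference via the oscillation of $f$ and the variation of $G$. The estimate for $|\int_K f\,dG|$ is also handled exactly as in the paper.

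There is, however, a genuine gap in your treatment of the left-isolated case. You assert that when $z_i$ is left-isolated one has $z_{i-1}=z_i^{-}$, ``since $z_{i-1}<z_i$ are now consecutive division points''. This inference is unfounded: $z_{i-1}$ and $z_i$ are consecutive in the division $\{z_0,\dots,z_m\}$ produced by Lemma~\ref{Lem:UniformContinuity}, but that lemma gives no control over the gap $[z_{i-1},z_i]$ beyond the dichotomy ``$z_i$ left-isolated \emph{or} small oscillation''. Nothing prevents $[z_{i-1},z_i]$ from being a large interval on which $P$ and $Q$ are partitioned very differently, and mere exposure of $z_i$ in the gauge does not force the preceding division point to be $z_i^{-}$. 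Concretely, if $([a,b],t)$ is the component of $P$ whose half-open part contains $z_i$, exposure gives $t=z_i$, but $(a,b]\subset\delta(z_i)$ imposes no lower bound on $a$ unless $\delta(z_i)$ itself is restricted.

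The paper closes this gap by strengthening the gauge: in addition to exposing each $z_j$, it requires that whenever $z_j$ is left-isolated one has $\delta(z_j)=[z_j,v)$ for some $v$. This extra condition forces, in any $\delta$-fine partition, the component tagged by $z_j$ to satisfy $(a,b]\subset[z_j,v)$, hence $(a,z_j)=\emptyset$ and $a=z_j^{-}$. After the Remark~\ref{Rem:EnlargePartitions} refinement, $z_j^{-}$ is then automatically a division point of both $P$ and $Q$, and in the common refinement the component ending at $z_j$ is exactly $([z_j^{-},z_j],z_j)$ in both, yielding the cancellation you want. With this single additional gauge requirement your argument goes through unchanged; without it, the left-isolated case is not under control.
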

\begin{proof}
Let $\varepsilon > 0$ be arbitrary. Since $f$ is continuous, by Lemma~\ref{Lem:UniformContinuity}, we can fix a division 
$\mathcal{C} = \{z_0, z_1, \ldots, z_r\}$ such that, for each $1 \leq i \leq r$, either $z_i$ is left-isolated or for all  $x, y \in [z_{i-1}, z_i]$, we have 
$|f(x) - f(y)| < \frac{\varepsilon}{\operatorname{Var}(G) + 1}$. Let $\delta$ be a gauge for $K$ that exposes all the points in $\mathcal{C}\setminus \{0_K\}$ and moreover, whenever $z\in \mathcal{C}\setminus\{1_K\}$ is a left-isolated point, 
$\delta(z)=[z,v)$ for some $v\in K$. Let $P, Q$ be $\delta$-fine partitions of $K$. By refining the partitions $P$ and $Q$ if necessary, we may assume that every element of $\mathcal{C} \setminus \{0_K\}$ appears both as a tag and as a division point in each of the partitions $P$ and $Q$ (see Remark~\ref{Rem:EnlargePartitions}).

Assume $P = \{(I_i, t_i): 1 \leq i \leq l\}$ and $Q = \{(J_j, s_j): 1 \leq j \leq m\}$. Let the union of the endpoints of all intervals $I_i$ and $J_j$ form the set $\{x_0, x_1, \ldots, x_n\}$. Define new partitions 
$P^* = \{([x_{k-1}, x_k], t'_k): 1 \leq k \leq n\}$ and $Q^* = \{([x_{k-1}, x_k], s'_k): 1 \leq k \leq n\}$ as follows: 

For each $1 \leq i \leq l$, if $I_i = \bigcup_{k=k_0}^{k_1} [x_{k-1}, x_k]$, set $t'_k = t_i$ for $k_0 \leq k \leq k_1$. Similarly, for each $1 \leq j \leq m$, if $J_j = \bigcup_{k=k_0}^{k_1} [x_{k-1}, x_k]$, set $s'_k = s_j$ for $k_0 \leq k \leq k_1$.

Now observe that if $I_i = [a, b]$ and $I_i = \bigcup_{k=k_0}^{k_1} [x_{k-1}, x_k]$, we have
\[
f(t_k) (G(b) - G(a)) = \sum_{k=k_0}^{k_1} f(t'_k) (G(x_k) - G(x_{k-1})),
\]
implying $S(f, G, P) = f(0_K) G(0_K) + \sum_{k=1}^n f(t'_k) (G(x_k) - G(x_{k-1})) = S(f, G, P^*)$. The same holds for $Q$, leading to:
\[
|S(f, G, P) - S(f, G, Q)| = |S(f, G, P^*) - S(f, G, Q^*)| = \left| \sum_{k=1}^n (f(t'_k) - f(s'_k)) (G(x_k) - G(x_{k-1})) \right|.
\]
Let $1 \leq k \leq n$ be arbitrary and consider the   intervals $([x_{k-1},x_k],t'_k)$ and $([x_{k-1},x_k],s'_k)$. If $x_k \in \mathcal{C}$ is a left-isolated point, by the construction of the gauge $\delta$ we necessarily have $t'_k=s'_k=x_k$ and  $x_{k-1} = x_k^-$. Consequently, $(f(t'_k) - f(s'_k)) (G(x_k) - G(x_{k-1}))=0$. Otherwise, there must exist $1 \leq j \leq r$ such that
$[x_{i-1},x_i]\subset [z_{j-1}, z_j]$. Since  $t'_k, s'_k \in [x_{i-1},x_i]$ we have
\begin{align*}
|S(f, G, P) - S(f, G, Q)| &\leq \sum_{k=1}^n |f(t'_k) - f(s'_k)| \, |G(x_k) - G(x_{k-1})| \\
&< \frac{\varepsilon}{\operatorname{Var}(G) + 1} \sum_{k=1}^n |G(x_k) - G(x_{k-1})| < \varepsilon.
\end{align*}
By the Bolzano-Cauchy Criterion \ref{Thm:BolzanoCauchy}, $f$ is $G$-integrable.

For the second part, let $\varepsilon > 0$ and let $\delta$ be an $\varepsilon$-gauge for $f$. Given a $\delta$-fine partition 
$P = \{([x_{i-1},x_i],t_i):1\leq i \leq n\}$ of $K$, we have
\begin{align*}
\left|\int_K f \, dG\right| &\leq |S(f, G, P)| + \varepsilon\leq |f(0_K) G(0_K)| + \sum_{i=1}^n |f(t_i)| \, |G(x_i) - G(x_{i-1})| + \varepsilon \\
&\leq  |f(0_K) G(0_K)|+\sup_{x \in K} |f(x)| \,\operatorname{Var}(G) + \varepsilon.
\end{align*}
Since $\varepsilon$ is arbitrary, the result follows.
\end{proof}


\subsection{Regularity of integrators}

As established in the previous section (Theorem~\ref{Thm:ContinuousIsGintegrable}), if a function $G$ defined on a compact line has bounded variation, then all continuous functions are $G$-integrable. However, to guarantee desirable properties of the $G$-integral, further assumptions on the function $G$ are required. 

Although the definition of the $G$-integral can, in principle, be formulated with respect to any function defined on a compact line, our investigation focuses primarily on three levels of regularity for $G$. Before presenting and analyzing these levels, we introduce some additional terminology.

We say that $G$ is \emph{right-continuous at} a point $x \in [0_K, 1_K)$ if, for every $\varepsilon > 0$, there exists a neighborhood $V$ of $x$ such that for all $y \in V \cap [x, 1_K]$, we have $|G(y) - G(x)| < \varepsilon$. The function $G$ is said to be \emph{right-continuous} if it is right-continuous at every point $x \in [0_K, 1_K)$.

If $x \in [0_K, 1_K)$ is a right-dense point, we say that the \emph{right-hand limit of $G$ at $x$ exists and is equal to} $A \in \mathbb{R}$ if, for every $\varepsilon > 0$, there exists a neighborhood $V$ of $x$ such that for all $y \in V \cap (x, 1_K]$, we have $|G(y) - A| < \varepsilon$. In this case, we write $A = \lim_{y \searrow x} G(y)$. Left-hand continuity and left-hand limits are defined analogously.

We say that a function $G: K \to \mathbb{R}$ is \emph{regulated} at a point $x \in K$ if the following conditions hold: either $x$ is left-isolated in $K$, or $x$ is left-dense in $K$ and the left-hand limit of $G$ exists at $x$; and either $x$ is right-isolated in $K$, or $x$ is right-dense in $K$ and the right-hand limit of $G$ exists at $x$. If $G$ is regulated at every point of $K$, we say that $G$ is a \emph{regulated function} on $K$. 

If $G$ is regulated at a point $x \in K$, we define
\[
L_G(x) = 
\begin{cases}
0 & \text{if } x = 0_K, \\
\lim_{y \nearrow x} G(y) & \text{if } x \in (0_K, 1_K] \text{ is left-dense}, \\
G(x^-) & \text{if } x \in (0_K, 1_K] \text{ is left-isolated}.
\end{cases}
\]
In particular, if $G$ is regulated on $K$, the function $L_G: K \to \mathbb{R}$ is well defined at each $x \in K$ by the formula above. This auxiliary function encapsulates the left-limit behavior of $G$ in a unified way and will serve as a fundamental tool in our analysis, especially in the formulation and interpretation of integrals with respect to $G$.

The three levels of regularity we focus on are as follows.

\
 
\paragraph{\textbf{Regularity Level 1: $G$ is amenable}} We say that $G$ is \emph{amenable} if it is both right-continuous and regulated. These conditions ensure that the integral of characteristic functions of intervals can be properly computed, and they also guarantee a satisfactory additivity property of the integral on subintervals. Moreover, under these assumptions, it is possible to establish a version of the Saks-Henstock lemma, which plays a central role in the theoretical framework.

\

\paragraph{\textbf{Regularity Level 2: $G$ is amenable and has bounded variation}}  
Adopting the terminology from \cite{ronchim2021study}, we denote by $\mathrm{NBV}(K)$ the space of functions that are both amenable and of bounded variation. This space becomes a Banach space when equipped with the norm $\|G\| = |G(0_K)| + \operatorname{Var}(G)$. 

When $G \in \mathrm{NBV}(K)$, we can rely on a powerful result from \cite[Theorem 2.4.11]{ronchim2021study} (see also \cite{tausk2023}), which states that $\mathrm{NBV}(K)$ can be naturally and isometrically identified with the space of Radon measures on $K$, denoted by $\mathcal{M}(K)$, endowed with the total variation norm $\|\mu\| = |\mu|(K)$. More precisely, the linear mapping that assigns to each measure $\mu \in \mathcal{M}(K)$ the function $G_\mu \in \mathrm{NBV}(K)$, defined by $G_\mu(x) = \mu([0_K, x])$ for every $x \in K$, is a well-defined isometric isomorphism.

At this level of regularity, we are able to define the concept of $G$-null sets. This notion allows us to identify functions that coincide $G$-almost everywhere, in a suitable sense, with respect to their integrability.

\

\paragraph{\textbf{Regularity Level 3: $G$ is amenable, positive and nondecreasing}}  
In this case, we observe in particular that $G \in \mathrm{NBV}(K)$, since every nondecreasing function has bounded variation. Under this stronger regularity assumption, we are able to introduce the notion of $G$-differentiability and establish a version of the Fundamental Theorem of Calculus, which connects the $G$-integral with the $G$-derivative in a precise and meaningful way. At this level of regularity, we can also prove the standard convergence theorems, such as the Monotone Convergence Theorem and the Dominated Convergence Theorem.

\subsection{Foundational properties and a Saks-Henstock-type lemma}
\label{Sec-BasicProperties}
We begin this section by establishing the fundamental and important fact that the integral defines a bilinear form. Indeed, given a   partition $P$ of a compact line $K$, it is straightforward to verify that the mapping $(f, G) \mapsto S(f, G, P)$ defines a bilinear form on the vector space of real-valued functions on $K$. From this observation, the bilinearity of the integral follows naturally, as stated in the next proposition.

\begin{prop}\label{Prop:BilinearityOfHKIntegral}
Let $K$ be a compact line and $G: K \to \mathbb{R}$ be a fixed function. If $f_1, f_2: K \to \mathbb{R}$ are $G$-integrable, and $\lambda$ is a real scalar, then $\lambda f_1 + f_2$ is integrable,  and satisfies
\[\int_K (\lambda f_1 + f_2) \, dG = \lambda \int_K f_1 \, dG + \int_K f_2 \, dG.\]
Furthermore, if a function $ f: K \to \mathbb{R} $ is integrable with respect to functions $G_1$ and $G_2$, then for every real scalar $\lambda$, $f$ is integrable with respect to $\lambda G_1 + G_2$, and we have
\[\int_K f \, d(\lambda G_1 + G_2) = \lambda \int_K f \, dG_1 + \int_K f \, dG_2.\]
\end{prop}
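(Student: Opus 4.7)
My plan is to reduce both identities to the bilinearity of Riemann sums $(f,G) \mapsto S(f,G,P)$ at a fixed partition $P$, explicitly noted in the paragraph preceding the proposition, together with a standard $\varepsilon/2$ argument based on taking a common refinement of gauges. The mechanism is the same in both slots; the only new ingredient relative to classical Henstock--Kurzweil theory is that a gauge here returns open intervals rather than positive numbers, and one must check that the ``intersection of gauges'' construction still makes sense.

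For the first identity, set $A_i = \int_K f_i\,dG$ and fix $\varepsilon > 0$. I would choose a gauge $\delta_1$ witnessing $G$-integrability of $f_1$ up to tolerance $\varepsilon/(2|\lambda|+2)$ and $\delta_2$ witnessing $G$-integrability of $f_2$ up to tolerance $\varepsilon/2$, and then define $\delta(x) = \delta_1(x) \cap \delta_2(x)$ for each $x \in K$. Since in a linearly ordered topological space the intersection of two open intervals that both contain $x$ is again an open interval containing $x$, the map $\delta$ is indeed a gauge on $K$; the existence of some $\delta$-fine $P$ is then guaranteed by Cousin's Lemma~\ref{lem:cousin}, and by \ref{it:Gauge1} any such $P$ is simultaneously $\delta_1$-fine and $\delta_2$-fine. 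Bilinearity of $S(\cdot,G,P)$ in the first argument gives
\[
S(\lambda f_1 + f_2, G, P) - (\lambda A_1 + A_2) = \lambda\bigl(S(f_1,G,P) - A_1\bigr) + \bigl(S(f_2,G,P) - A_2\bigr),
\]
and the triangle inequality bounds the right-hand side by $\varepsilon$, proving that $\lambda f_1 + f_2$ is $G$-integrable with integral $\lambda A_1 + A_2$.

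The second identity is entirely symmetric: choose gauges witnessing $G_1$- and $G_2$-integrability of the single function $f$ with tolerances $\varepsilon/(2|\lambda|+2)$ and $\varepsilon/2$, intersect them, and apply bilinearity of $S(f,\cdot,P)$ in the integrator slot. I do not anticipate any substantive obstacle; the entire content lies in the translation of the classical ``take the minimum of two gauges'' trick to our setting, where ``minimum'' is replaced by ``intersection of open intervals.'' Once this observation is in place, the rest is a routine verification via the triangle inequality.
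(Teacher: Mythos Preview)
Your proposal is correct and follows essentially the same approach as the paper: both arguments reduce to bilinearity of $S(\cdot,\cdot,P)$, pick a common gauge refining the individual $\varepsilon$-gauges, and conclude via the triangle inequality. Your write-up is in fact slightly more explicit than the paper's, which simply asserts ``let $\delta$ be a $\frac{\varepsilon}{2(1+|\lambda|)}$-gauge for both $f_1$ and $f_2$'' without spelling out the intersection-of-open-intervals construction you describe.
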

\begin{proof}
Let $ A_1 = \int_K f_1 \, dG $ and $ A_2 = \int_K f_2 \, dG $. For a given $ \varepsilon > 0 $, let $ \delta $ be a $ \frac{\varepsilon}{2(1 + |\lambda|)} $-gauge for both $ f_1 $ and $ f_2 $ 
Then, for any partition $ P \ll \delta $, we have
\[
\left| S(\lambda f_1 + f_2, P, G) - \lambda A_1 - A_2 \right| \leq |\lambda| \left| S(f_1, P, G) - A_1 \right| + \left| S(f_2, P, G) - A_2 \right| < \varepsilon.
\]
This shows that $ \lambda f_1 + f_2 $ is integrable and that the integral satisfies the desired linearity property. The second part can be proved using a similar argument with an appropriate gauge.
\end{proof}

To establish the additivity property on subintervals (Theorem~\ref{Thm:Additivity} below), we begin by explicitly computing the integral of the characteristic function of a singleton. In compact lines, such points often carry nonzero mass, which leads to a correction term in the additivity formula. Assuming $G$ is amenable, this computation is manageable, as the following proposition shows.

\begin{prop}\label{Prop:int singleton}
    Let $G$ be an amenable function on a compact line $K$, and let $c \in K$. Then $\chi_{\{c\}}$ is $G$-integrable on $K$, and 
    \begin{equation}\label{eq:int singleton}
        \int_K \chi_{\{c\}}\,dG = G(c) - L_G(c). 
    \end{equation}
\end{prop}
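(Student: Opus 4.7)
My approach is to prove the integrability directly by constructing, for each $\varepsilon > 0$, an appropriate gauge $\delta$ based on the regularity of $G$ near $c$, and then checking that every $\delta$-fine partition $P$ yields a Riemann sum within $\varepsilon$ of $G(c) - L_G(c)$. The construction will use the idea of exposing $c$ (item \ref{it:Gauge2}) so that $c$ is forced to appear as a tag in any $\delta$-fine partition, and the key values of $G$ at neighboring division points will be controlled by the regulated/right-continuous properties of $G$ on each side of $c$.

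Concretely, suppose first that $c \neq 0_K$. I would define $\delta(c) = (a,b)$ where $a$ handles the left side of $c$ and $b$ handles the right: if $c$ is left-isolated, set $a = c^-$; if $c$ is left-dense, use the existence of the left-hand limit of $G$ at $c$ to pick $a < c$ with $|G(y) - L_G(c)| < \varepsilon/2$ for every $y \in [a, c)$. Similarly, if $c$ is right-isolated or $c = 1_K$, take $b = c^+$ (formally $b = 1_K^+$ at the endpoint), and otherwise use right-continuity to pick $b > c$ with $|G(y) - G(c)| < \varepsilon/2$ for $y \in [c, b)$. For every $x \neq c$, pick $\delta(x)$ an open interval containing $x$ but not $c$.

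Given any $\delta$-fine partition $P$, I would first invoke Remark \ref{Rem:EnlargePartitions} to assume $c$ is a division point, say $c = x_{i_0}$. Exposure forces $t_{i_0} = c$, and since $t_{i_0-1} < c$ and $t_{i+1} > c$ for $i \geq i_0 + 1$, the only other index whose tag can equal $c$ is $i_0 + 1$. Thus the Riemann sum collapses to either $G(c) - G(x_{i_0-1})$ or, if $t_{i_0+1} = c$, to $G(x_{i_0+1}) - G(x_{i_0-1})$. The construction of $\delta(c)$ then guarantees $|G(x_{i_0-1}) - L_G(c)| < \varepsilon/2$ (in the left-isolated case even with equality, by forcing $x_{i_0-1} = c^-$; in the right-isolated case, $(c,x_{i_0+1}] \subset (a,c^+)$ forces $x_{i_0+1} = c$ and a degenerate contribution), and $|G(x_{i_0+1}) - G(c)| < \varepsilon/2$ when needed; combining yields the desired estimate.

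The case $c = 0_K$ is separate but similar in spirit: the correcting term $\chi_{\{0_K\}}(0_K) G(0_K) = G(0_K)$ already matches the target $G(0_K) - L_G(0_K) = G(0_K)$, so I need only to bound the remaining sum. Choosing $\delta(0_K)$ to be a small right-neighborhood where $G$ is within $\varepsilon$ of $G(0_K)$ (or the singleton $\{0_K\}$ when $0_K$ is right-isolated) guarantees the only potentially nonzero contribution is a single telescoping term $G(x_1) - G(0_K)$, which is within $\varepsilon$. The main obstacle is a careful case analysis (four combinations of left/right isolated/dense, plus the endpoint cases) together with the technicality that $c$ may legitimately serve as the tag of two consecutive components on either side of it; the exposure trick and the separate left/right control baked into $\delta(c)$ are the tools that tame both issues.
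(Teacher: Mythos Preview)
Your proposal is correct and follows essentially the same approach as the paper's proof: both construct a gauge that exposes $c$ using the amenability of $G$ to control $|G(y)-L_G(c)|$ on the left and $|G(y)-G(c)|$ on the right of $c$, refine any $\delta$-fine partition so that $c$ is a division point (via Remark~\ref{Rem:EnlargePartitions}), and then bound the resulting Riemann sum $G(x_{i_0+1})-G(x_{i_0-1})$ against $G(c)-L_G(c)$ using the two $\varepsilon/2$ estimates. Your separate treatment of the endpoint $c=0_K$ and your extra case split on whether $t_{i_0+1}=c$ are harmless (the latter is in fact automatic after the refinement), and the left/right-isolated subcases are handled just as in the paper.
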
    

\begin{proof}
    We prove the result for $c \in (0_K, 1_K)$; the remaining cases follow by a simplified version of the same argument. Let $\varepsilon > 0$ be arbitrary. Since $G$ is amenable, there exists an open interval $I$ containing $c$ such that:
    \begin{itemize}
        \item $x \in I \cap [0_K, c) \Rightarrow |G(x) - L_G(c)| < \frac{\varepsilon}{2}$,
        \item $x \in I \cap [c, 1_K] \Rightarrow |G(x) - G(c)| < \frac{\varepsilon}{2}$.
    \end{itemize}
We define a gauge $\delta$ on $K$ by setting $\delta(x) = [0_K, c)$ for $x < c$ and $\delta(x) = (c, 1_K]$ for $x > c$. At the point $c$, if it is left-isolated, we define $\delta(c) = I \cap [c, 1_K]$; if $c$ is left-dense, we select a point $y_c < c$ such that $[y_c, c] \subset I$ and let $\delta(c) = I \cap (y_c, 1_K]$. Finally, we refine $\delta$ to ensure that the point $c$ is exposed in any $\delta$-fine partition.

Let $P = \{([x_{i-1}, x_i], t_i): 1 \leq i \leq n\}$ be a $\delta$-fine   partition of $K$. By refining the partition if necessary, we may assume that $c$ appears both as a tag and as a division point of two consecutive components of $P$ (see Remark~\ref{Rem:EnlargePartitions}). Let $1 \leq i \leq n$ be such that $t_i = c = t_{i+1}$. Then, the Riemann sum becomes:
\begin{align*}
    S(\chi_{\{c\}}, G, P) &= \chi_{\{c\}}(c)\big(G(c) - G(x_{i-1})\big) + \chi_{\{c\}}(c)\big(G(x_{i+1}) - G(c)\big)\\
    &= G(x_{i+1}) - G(x_{i-1}).
\end{align*}
    Therefore,
    \begin{align*}
        \left|S(\chi_{\{c\}}, G, P) - \big(G(c) - L_G(c)\big)\right|
        &= \left|G(x_{i+1}) - G(x_{i-1}) - G(c) + L_G(c)\right| \\
        &\leq |G(x_{i+1}) - G(c)| + |L_G(c) - G(x_{i-1})|.
    \end{align*}
    Since $x_{i+1} \in I \cap [c, 1_K]$, we have $|G(x_{i+1}) - G(c)| < \frac{\varepsilon}{2}$. If $c$ is left-isolated, then $x_{i-1} = c^-$ and $|L_G(c) - G(x_{i-1})| = 0$. Otherwise, $x_{i-1} \in I \cap [0_K, c)$, so $|L_G(c) - G(x_{i-1})| < \frac{\varepsilon}{2}$.

    Thus,
    \[
        \left|S(\chi_{\{c\}}, G, P) - \big(G(c) - L_G(c)\big)\right| < \varepsilon,
    \]
    and the proof is complete.
\end{proof}

Our next step will be to establish the $G$-integrability of indicator functions determined by a $G$-integrable function $f$.

\begin{defin}\label{def:IndicatorFunction}
Let $K$ be a compact line and $I\subset K$ be a subinterval. For every function $f:K\to \mathbb{R}$, the indicator function of $I$ with respect to $f$ is the function $f_I:K\to \mathbb{R}$ defined by the formula
\[ f_I(x) = \left\{ \begin{array}{cc}
f(x) & \text{ if } x \in I\\
0 & \text{ otherwise.}  
\end{array} \right. \]
\end{defin}

\begin{prop}\label{Prop:IndicatorFunctionIntegral}
Let $K$ be a compact line and let $G:K\to \mathbb{R}$ be an amenable function. If a function $f:K\to \mathbb{R}$ is $G$-integrable over $K$, then the indicator function $f_I:K\to \mathbb{R}$ is $G$-integrable over $K$ for every nonempty subinterval $I \subset K$.
\end{prop}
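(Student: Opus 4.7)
I would apply the Bolzano--Cauchy Criterion (Theorem~\ref{Thm:BolzanoCauchy}) after a reduction. Since every subinterval $I$ of $K$ has endpoints $a=\inf I$ and $b=\sup I$ in $K$, and $\chi_I$ can be expressed as a linear combination with coefficients $\pm 1$ of $\chi_{[0_K,b]}$, $\chi_{[0_K,a]}$, $\chi_{\{a\}}$ and $\chi_{\{b\}}$ (depending on which endpoints belong to $I$), the function $f_I$ is a linear combination of $f_{[0_K,b]}$, $f_{[0_K,a]}$, $f_{\{a\}}=f(a)\chi_{\{a\}}$ and $f_{\{b\}}=f(b)\chi_{\{b\}}$. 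The singleton pieces are $G$-integrable by Proposition~\ref{Prop:int singleton} combined with bilinearity (Proposition~\ref{Prop:BilinearityOfHKIntegral}); amenability of $G$ is essential here. Thus the problem reduces to showing that $f_{[0_K,c]}$ is $G$-integrable for each $c\in K$ (the case $c=1_K$ is trivial and $c=0_K$ is a singleton).

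Fix $c\in(0_K,1_K)$. For $\varepsilon>0$, I would construct a gauge $\delta$ on $K$ that simultaneously (i) is an $\varepsilon/2$-gauge for $f$ in the sense of Remark~\ref{Rem:EpsilonGauge2}, (ii) exposes $c$ as in~\ref{it:Gauge2}, and (iii) satisfies $|G(y)-G(c)|<\varepsilon/(4(|f(c)|+1))$ for all $y\in\delta(c)\cap(c,1_K]$, which is available by the right-continuity of $G$ at $c$ guaranteed by amenability. Cousin's Lemma~\ref{lem:cousin} yields a $\delta$-fine partition $R$ of $[c,1_K]$, which I fix once and for all. Given any two $\delta$-fine partitions $P,Q$ of $K$, after refining as in Remark~\ref{Rem:EnlargePartitions} so that $c$ becomes a division point of both, I decompose $P=P'\cup P''$ and $Q=Q'\cup Q''$, where $P',Q'$ are $\delta$-fine partitions of $[0_K,c]$ and $P'',Q''$ are $\delta$-fine partitions of $[c,1_K]$. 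Since every tag in $P',Q'$ lies in $[0_K,c]$ while tags in $P'',Q''$ lie in $[c,1_K]$, a direct computation gives
\[
S(f_{[0_K,c]},G,P)\;=\;S(f,G,\widehat{P'\cup R})\;-\;\sum_{(J,t)\in R}f(t)\,\Delta G\;+\;E(P),
\]
where the correction $E(P)$ equals either $0$ or $f(c)(G(z_1)-G(c))$ depending on whether the first component of $P''$ carries the tag $c$ (with $z_1$ its right endpoint). The analogous identity holds for $Q$. Subtracting, the $R$-sums cancel; the difference $|S(f,G,\widehat{P'\cup R})-S(f,G,\widehat{Q'\cup R})|$ is less than $\varepsilon/2$ by Bolzano--Cauchy on $f$ (both merged partitions being $\delta$-fine on $K$ by~\ref{it:Gauge3}), and $|E(P)-E(Q)|<\varepsilon/2$ by condition~(iii), so Theorem~\ref{Thm:BolzanoCauchy} yields the $G$-integrability of $f_{[0_K,c]}$.

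The main obstacle, and the reason amenability is indispensable, is precisely the bookkeeping near the exposed point $c$: because $c$ is forced to be the tag of some component, after splitting at $c$ the tag may also appear at the start of $P''$, producing the asymmetric boundary contribution $E(P)$ which is not controlled by any $\varepsilon$-gauge for $f$ alone. Right-continuity of $G$ at $c$ is exactly what makes this contribution negligible and closes the estimate.
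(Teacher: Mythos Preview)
Your proposal is correct and follows essentially the same approach as the paper's proof: the reduction via linearity and Proposition~\ref{Prop:int singleton} to the case $I=[0_K,c]$, the construction of a gauge that is an $\varepsilon/2$-gauge for $f$, exposes $c$, and exploits right-continuity of $G$ at $c$, the use of a fixed $\delta$-fine partition $R$ of $[c,1_K]$ to replace the right half of $P$ and $Q$, and the final appeal to the Bolzano--Cauchy Criterion are all exactly as in the paper. Your identification of the boundary correction $E(P)$ and its control via amenability is precisely the paper's estimate $|f(b)||G(x_i)-G(y_j)|<\varepsilon/2$, just phrased slightly differently.
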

\begin{proof}
By the linearity of the integral and Proposition~\ref{Prop:int singleton}, it suffices to prove the statement for $I = [0_K, b]$ with $0_K < b < 1_K$. Let $\varepsilon > 0$ be arbitrary. Since $f: K \to \mathbb{R}$ is $G$-integrable over $K$, there exists a $\frac{\varepsilon}{2}$-gauge $\delta$ for $f$ on $K$. Furthermore, as $G$ is amenable, we may assume that 
\[
|G(x) - G(y)| < \frac{\varepsilon}{2(|f(b)| + 1)}
\quad \text{for all } x, y \in \delta(b) \cap [b, 1_K].
\]

By Lemma~\ref{lem:cousin}, fix a $\delta$-fine   partition $R$ of $[b, 1_K]$. Let $P$ and $Q$ be arbitrary $\delta$-fine   partitions of $K$. By refining the partitions if necessary, we may assume that $b$ appears both as a tag and as a division of two consecutive components in each of the partitions $P$ and $Q$ (see Remark~\ref{Rem:EnlargePartitions}). Define $P_I$ and $Q_I$ as the subpartitions of $P$ and $Q$ with intervals contained in $I$ and note that the merged partitions $P_1 = \widehat{P_I \cup R}$ and $Q_1 = \widehat{Q_I \cup R}$ are also $\delta$-fine  partitions of $K$.

Let $([x_{i-1}, x_i], t_i)$ and $([y_{j-1}, y_j], s_j)$ be the tagged components of $P$ and $Q$, respectively, such that $x_{i-1} = b = t_i$ and $y_{j-1} = b = s_j$. By construction, we have $x_{i}, y_{j} \in \delta(b) \cap [b, 1_K]$, and thus:
\begin{align*}
|S(f_I, G, P) - S(f_I, G, Q)| 
&= \Big| S(f, G, P_1) - S(f, G, Q_1) \\
&\quad + f(b)\big(G(x_{i}) - G(b)\big) - f(b)\big(G(y_{j}) + G(b)\big) \Big| \\
&\leq |S(f, G, P_1) - S(f, G, Q_1)| + |f(b)||G(x_{i}) - G(y_{j})| \\
&\leq \frac{\varepsilon}{2} + \frac{\varepsilon}{2} = \varepsilon.
\end{align*}

We conclude that $|S(f_I, G, P) - S(f_I, G, Q)| < \varepsilon$. By Theorem~\ref{Thm:BolzanoCauchy}, it follows that $f_I$ is $G$-integrable over $K$.
\end{proof}

Given a subinterval $I$ of a compact set $K$, and a function $f: K \to \mathbb{R}$, in addition to the indicator function of $I$ with respect to $f$, as described above, we may also consider the restriction of $f$ to $I$, denoted $f|_I: I \to \mathbb{R}$, where $f|_I(x) = f(x)$ for every $x \in I$. These functions are naturally related, as are their integrals. The next proposition will establish the relation between these concepts.

For simplicity, throughout this text, we will say that $f|_I$ is \emph{$G$-integrable} rather than explicitly stating that $f|_I$ is $G|_I$-integrable. Whenever the integral exists, we will denote it by $\int_I f \, dG$, omitting the restriction notation $f|_I$.
 
\begin{prop}\label{prop:IntegrationOnASubinterval}
Let $K$ be a compact line and let $G:K\to \mathbb{R}$ be an amenable function. For every function $f:K\to \mathbb{R}$ and every compact subinterval $I\subset K$, the function $f_I$ is $G$-integrable on $K$ if and only if $f|_I:I\to \mathbb{R}$ is $G$-integrable on $I$. Moreover, we have:
\[
\int_K f_I\,dG = \int_I f\,dG - f(0_I)L_G(0_I).
\]
\end{prop}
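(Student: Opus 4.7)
The plan is to derive an explicit identity relating Riemann sums for $f_I$ on $K$ to Riemann sums for $f|_I$ on $I$, whose boundary corrections can then be controlled by the amenability of $G$. I will treat the generic case $0_K < 0_I$ and $1_I < 1_K$; the boundary cases $0_I = 0_K$, $1_I = 1_K$, and $0_I = 1_I$ arise as easy specializations of the same computation (using $L_G(0_K) = 0$ in the first case). Write $a = 0_I$ and $b = 1_I$.

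Fix a gauge $\delta$ on $K$ that exposes both $a$ and $b$ in the sense of \ref{it:Gauge2}. By Remark~\ref{Rem:EnlargePartitions}, any $\delta$-fine partition $P$ of $K$ may be refined so that $a$ and $b$ appear as division points, each tagging the two adjacent components. Then \ref{it:Gauge3} lets us split $P$ uniquely as a merging $\widehat{R_L \cup P_I \cup R_R}$ of $\delta$-fine partitions $R_L$ of $[0_K,a]$, $P_I$ of $I$, and $R_R$ of $[b,1_K]$. Since $f_I$ vanishes off $I$ and exposure forces every tag in $R_L$ (resp.\ $R_R$) other than the last (resp.\ first) to lie strictly to the left of $a$ (resp.\ right of $b$), a direct computation yields
\[
S(f_I, G, P) \;=\; S(f|_I, G|_I, P_I) \;-\; f(a)\,G(x_{r-1}) \;+\; f(b)\bigl(G(x_{s+1}) - G(b)\bigr),
\]
where $x_{r-1}$ is the division point in $R_L$ immediately preceding $a$ and $x_{s+1}$ the one in $R_R$ immediately following $b$.

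For the forward direction, assume $A := \int_I f\,dG$ exists and fix $\varepsilon > 0$. Take an $\varepsilon/3$-gauge $\delta_I$ for $f|_I$ on $I$ and build a gauge $\delta$ on $K$ such that (i) $\delta|_I$ refines $\delta_I$; (ii) $\delta$ exposes $a$ and $b$; (iii) $|G(y) - L_G(a)| < \varepsilon/(3(|f(a)|+1))$ for every $y \in \delta(a) \cap [0_K, a)$, and $|G(y) - G(b)| < \varepsilon/(3(|f(b)|+1))$ for every $y \in \delta(b) \cap [b, 1_K]$. Condition (iii) is achievable because $G$ is regulated at $a$ (when $a$ is left-dense the left-limit $L_G(a)$ controls $G$ on sufficiently narrow left-neighborhoods; when $a$ is left-isolated $\delta$-fineness forces $x_{r-1} = a^-$, hence $G(x_{r-1}) = L_G(a)$ exactly) and right-continuous at $b$. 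The displayed identity then gives $|S(f_I, G, P) - (A - f(a) L_G(a))| < \varepsilon$, so $f_I$ is $G$-integrable with integral $A - f(0_I) L_G(0_I)$.

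For the backward direction I apply the Bolzano-Cauchy Criterion (Theorem~\ref{Thm:BolzanoCauchy}) on $I$. Given $\varepsilon > 0$, pick an $\varepsilon$-gauge $\delta$ for $f_I$ on $K$ in the sense of Remark~\ref{Rem:EpsilonGauge2}(2) that exposes $a$ and $b$. By Cousin's Lemma (Lemma~\ref{lem:cousin}) applied to the induced gauges on $[0_K,a]$ and $[b,1_K]$, fix once and for all $\delta$-fine partitions $R_L$ and $R_R$ of these intervals. For any pair of $\delta|_I$-fine partitions $P_I, Q_I$ of $I$, both merged partitions $\widehat{R_L \cup P_I \cup R_R}$ and $\widehat{R_L \cup Q_I \cup R_R}$ are $\delta$-fine on $K$. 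The identity from the first step, applied to each, shows that their $f_I$-Riemann sums differ exactly by $S(f|_I, G|_I, P_I) - S(f|_I, G|_I, Q_I)$ because the boundary terms involving $R_L$ and $R_R$ coincide and cancel. Hence this latter difference is less than $\varepsilon$, and Bolzano-Cauchy yields $G$-integrability of $f|_I$. The main technical obstacle is precisely the design of $\delta$ at the boundary points $a$ and $b$ in (iii): unlike the classical real-line setting, the term $-f(a) L_G(a)$ does not vanish in general, reflecting the possible mass concentration at the left endpoint of $I$, and both amenability conditions (left-regularity at $a$ and right-continuity at $b$) are required to absorb the two non-cancelling boundary contributions.
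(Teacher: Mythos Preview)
Your proof is correct and follows essentially the same route as the paper's: exposure of $a$ and $b$, splitting $\delta$-fine partitions of $K$ into a middle piece on $I$ and outer pieces on $[0_K,a]$ and $[b,1_K]$, and amenability of $G$ to control the boundary contributions at $a$ and $b$. The only organizational difference is that you isolate the explicit Riemann-sum identity first and use it to obtain the implication $f|_I$ integrable $\Rightarrow$ $f_I$ integrable together with the integral formula in a single step, whereas the paper argues both implications via the Bolzano--Cauchy criterion and then verifies the formula by a separate computation.
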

\begin{proof}
Let $I = [a, b] \subseteq K$ be an arbitrary closed subinterval. We assume that $0_K < a < b < 1_K$, since the remaining cases follow by simpler versions of the same argument.

Assume first that $f_I$ is $G$-integrable on $K$, and fix $\varepsilon > 0$. Then we may choose an $\varepsilon$-gauge $\delta$ for $f_I$ on $K$. By Lemma~\ref{lem:cousin}, we can fix $\delta$-fine partitions $R_a$ of $[0_K, a]$ and $R_b$ of $[b, 1_K]$.

Now, let $P$ and $Q$ be arbitrary $\delta$-fine partitions of $I$. Define $P_1 = \widehat{R_a \cup P \cup R_b}$ and $Q_1 = \widehat{R_a \cup Q \cup R_b}$. These are $\delta$-fine partitions of $K$, and we have:
\[
|S(f|_I, G, P) - S(f|_I, G, Q)| = |S(f_I, G, P_1) - S(f_I, G, Q_1)| < \varepsilon.
\]
Hence, by Theorem~\ref{Thm:BolzanoCauchy}, $f|_I$ is $G$-integrable on $I$.

Conversely, suppose that $f|_I$ is $G$-integrable on $I$. Let $\delta$ be an $\frac{\varepsilon}{2}$-gauge for $f|_I$. Let $a_1$ be the right endpoint of $\delta(a)$, and $b_0$ the left endpoint of $\delta(b)$. Since $G$ is amenable, there exists $b_1 > b$ such that $|G(x) - G(y)| < \frac{\varepsilon}{4(|f(b)| + 1)}$ for all $x, y \in [b, b_1)$. 

If $a$ is left-isolated, we define $a_0 = a^-$. If $a$ is left-dense, by the amenability of $G$, we may choose $a_0 < a$ such that $|G(x) - G(y)| < \frac{\varepsilon}{4(|f(a)| + 1)}$ for all $x, y \in [a_0, a)$.

Define the gauge $\delta_1$ on $K$ by:
\[
\delta_1(t) =
\begin{cases}
    [0_K, a) & \text{if } t < a, \\
    (a_0, a_1) & \text{if } t = a, \\
    \delta(t) & \text{if } t \in (a, b), \\
    (b_0, b_1) & \text{if } t = b, \\
    (b, 1_K] & \text{if } t > b.
\end{cases}
\]
We further refine $\delta_1$ to ensure that $a$ and $b$ are exposed in any $\delta_1$-fine partition of $K$. Let $P$ and $Q$ be such partitions. By inserting additional points and tags, we may assume that $a$ and $b$ appear as both tags and division points in two consecutive components of each partition.

Let $P_I$ and $Q_I$ be the subpartitions of $P$ and $Q$ consisting of components entirely contained in $I$. Then $P_I$ and $Q_I$ are $\delta$-fine partitions of $I$. Let $([x_{i-1},x_i],t_i)$ and $([y_{j-1},y_j],s_j)$ be the components of $P$ and $Q$ such that $x_i = a = t_i$ and $y_j = a = s_j$, and let $([x_{p-1},x_p],t_p)$ and $([y_{q-1},y_q],s_q)$ be those such that $x_{p-1} = b = t_p$ and $y_{q-1} = b = s_q$.

Then, from the construction of the gauge $\delta_1$ we have:
\begin{align*}
|S(f_I, G, P) - S(f_I, G, Q)| &= \big|S(f|_I, G, P_I) - S(f|_I, G, Q_I) \\
&+f(a)\big(G(a) - G(x_{i-1})\big)- f(a)\big(G(a)-G(y_{j-1})\big)\\
&+f(b)\big(G(x_{p}) - G(b)\big)- f(b)\big(G(y_{q}) - G(b)\big)\big|\\
&<\frac{\varepsilon}{2}+ |f(a)||G(x_{i-1}) - G(y_{j-1})| + |f(b)||G(x_p) - G(y_q)| \\
&< \frac{\varepsilon}{2} + \frac{\varepsilon}{4} + \frac{\varepsilon}{4} = \varepsilon.
\end{align*}
By Theorem~\ref{Thm:BolzanoCauchy}, $f_I$ is $G$-integrable over $K$.

\ 

To prove the remainder statement, fix $\varepsilon > 0$. By the amenability of $G$, choose $v > b$ such that $|G(x) - G(y)| < \frac{\varepsilon}{6(1 + |f(b)|)}$ for all $x, y \in [b, v)$. If $a$ is left-isolated, set $u = a^-$; otherwise, choose $u < a$ so that $|G(x) - G(y)| < \frac{\varepsilon}{6(1 + |f(a)|)}$ for all $x, y \in [u, a)$.

Let $\delta$ be a $\frac{\varepsilon}{3}$-gauge for $f_I$ on $K$ such that $a$ and $b$ are exposed, and such that the induced gauge $\delta_I$ is a $\frac{\varepsilon}{3}$-gauge for $f|_I$. Further refine $\delta$ so that $\delta(a) \cap [0_K, a] \subset (u, a]$ and $\delta(b) \cap [b, 1_K] \subset [b, v)$.

Let $P = \{([x_{i-1},x_i],t_i):1\leq i \leq n \}$ be a $\delta$-fine partition of $K$, where $a$ and $b$ appear as both tag and division points of two consecutive components. Let $1\leq s < r \leq n$ such that $t_s = a = t_{s+1}$ and $t_r = b = t_{r+1}$. Let $P_I$ be the partition of $I$ consisting of the components of $P$ that have interval contained in $I$.

We have:
\begin{align*}
S(f_I,G,P)&=-f(a)G(a)+f(a)(G(a)-G(x_{s-1}))+f(b)(G(x_{r+1})-G(b))+S(f|_I,G,P_I)\\
&=-f(a)L_G(a)+f(a)(L_G(a)-G(x_{s-1}))+f(b)(G(x_{r+1})-G(b))+S(f|_I,G,P_I)
\end{align*}
and from the construction of the gauge we obtain
\begin{align*}
|S(f_I,P,G)-f(a)L_G(a)-S(f|_I,G,P_I)|&\leq |f(b)||G(x_r)-G(b)|\\
&+|f(a)||L_G(a)-G(x_{s-1})|<\frac{\varepsilon}{3}.
\end{align*}
Therefore
\begin{align*}
\left|\int_K f_I\,dG-L_G(a)-\int_I f\,dG \right|&\leq \left|\int_K f_I\,dG-S(f_I,G,P) \right|\\
&+\left|S(f_I,G,P)-L_G(a)-S(f|_I,G,P_I) \right|\\
&+\left|S(f|_I,G,P_I)-\int_I f\,dG \right|<\frac{\varepsilon}{3}+\frac{\varepsilon}{3}+\frac{\varepsilon}{3}=\varepsilon.
\end{align*}
\end{proof}

We are now in a position to establish a theorem concerning the additivity of the integral with respect to subintervals.

\begin{theorem}[Additivity on subintervals]\label{Thm:Additivity}
Let $[a,b]$ be a compact line and let $G:[a,b]\to \mathbb{R}$ be an amenable function. For every $c\in [a,b]$, a function $f$ is $G$-integrable on $[a,b]$ if and only if it is $G$-integrable on both $[a,c]$ and $[c,b]$. In that case, we have:
\[\int_a^b f\,dG = \int_a^c f\,dG + \int_c^b f\,dG - f(c)G(c).\]
\end{theorem}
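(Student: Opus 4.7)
The plan is to reduce everything to the pointwise identity
\[f = f_{[a,c]} + f_{[c,b]} - f(c)\chi_{\{c\}}\]
on $[a,b]$, and then to invoke the tools established so far: bilinearity of the integral (Proposition \ref{Prop:BilinearityOfHKIntegral}), integrability of indicator functions (Proposition \ref{Prop:IndicatorFunctionIntegral}), integrability of singletons (Proposition \ref{Prop:int singleton}), and the passage between $f_I$ on $[a,b]$ and $f|_I$ on $I$ (Proposition \ref{prop:IntegrationOnASubinterval}). Amenability of $G$ is required precisely because these supporting results rely on it.

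For the biconditional on integrability: if $f$ is $G$-integrable on $[a,b]$, then both $f_{[a,c]}$ and $f_{[c,b]}$ are $G$-integrable on $[a,b]$ by Proposition \ref{Prop:IndicatorFunctionIntegral}, whence Proposition \ref{prop:IntegrationOnASubinterval} yields $G$-integrability of the restrictions $f|_{[a,c]}$ and $f|_{[c,b]}$ on their respective subintervals. Conversely, if $f$ is $G$-integrable on both $[a,c]$ and $[c,b]$, then Proposition \ref{prop:IntegrationOnASubinterval} returns $G$-integrability of $f_{[a,c]}$ and $f_{[c,b]}$ on $[a,b]$; since $\chi_{\{c\}}$ is $G$-integrable by Proposition \ref{Prop:int singleton}, bilinearity applied to the decomposition above yields $G$-integrability of $f$ on $[a,b]$.

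For the formula, I apply bilinearity to the decomposition to obtain
\[\int_a^b f\,dG = \int_a^b f_{[a,c]}\,dG + \int_a^b f_{[c,b]}\,dG - f(c)\int_a^b \chi_{\{c\}}\,dG.\]
By Proposition \ref{prop:IntegrationOnASubinterval}, the first summand equals $\int_a^c f\,dG - f(a)L_G(a) = \int_a^c f\,dG$, since $a = 0_{[a,b]}$ forces $L_G(a)=0$ by definition. The second summand equals $\int_c^b f\,dG - f(c)L_G(c)$, and Proposition \ref{Prop:int singleton} gives the third integral as $G(c) - L_G(c)$. Summing, the two $L_G(c)$ contributions cancel and leave exactly the correction $-f(c)G(c)$, yielding the claimed identity.

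The main subtlety to watch is keeping the various $L_G$ values consistent and remembering that the extra term $f(0_K)G(0_K)$ intrinsic to our Riemann sums (see Remark \ref{rem:CompareNabla}) is what ultimately produces the correction $-f(c)G(c)$; the degenerate cases $c=a$ and $c=b$ pose no difficulty, since then one of the subinterval integrals collapses to a single-point Riemann sum and the formula trivializes.
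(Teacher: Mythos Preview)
Your proof is correct and follows essentially the same approach as the paper's: the same decomposition $f = f_{[a,c]} + f_{[c,b]} - f(c)\chi_{\{c\}}$ (the paper writes the last term as $f_{\{c\}}$), the same appeals to Propositions~\ref{Prop:int singleton}, \ref{Prop:IndicatorFunctionIntegral}, and~\ref{prop:IntegrationOnASubinterval}, and the same cancellation of the $L_G(c)$ terms. You are slightly more explicit than the paper in spelling out both directions of the biconditional and in noting that $L_G(a)=0$, but the argument is the same.
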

\begin{proof}
Let $I = [a, c]$ and $J = [c, b]$. Then we may write $f = f_I + f_J - f_{\{c\}}$. By Proposition~\ref{Prop:IndicatorFunctionIntegral} and the linearity of the integral, it follows that $f$ is $G$-integrable on $[a,b]$ if and only if $f_I$, $f_J$, and $f_{\{c\}}$ are $G$-integrable. 

Applying Proposition~\ref{prop:IntegrationOnASubinterval}, we obtain:
\begin{align*}
\int_a^b f\,dG &= \int_a^b f_I\,dG + \int_a^b f_J\,dG - \int_a^b f_{\{c\}}\,dG \\
&= \int_a^c f\,dG + \int_c^b f\,dG - f(c)L_G(c) - f(c)(G(c) - L_G(c)) \\
&= \int_a^c f\,dG + \int_c^b f\,dG - f(c)G(c). \qedhere
\end{align*}
\end{proof}

\begin{defin}
Let $K$ be a compact line. A \emph{tagged system of intervals} in $K$ is a finite collection $\mathcal{S} = \{([a_k,b_k], t_k) : k = 1, \ldots, m\}$, where $t_k \in [a_k, b_k]$ for all $1 \leq k \leq m$, and the intervals are ordered so that $b_k \leq a_{k+1}$ for each $1 \leq k < m$. Moreover, given a gauge $\delta$ on $K$, we say that the tagged system $\mathcal{S}$ is \emph{$\delta$-fine} if for every $1\leq k \leq m$ we have that $(a_k, b_k]\subset \delta(t_k)$.
\end{defin}

\begin{lemma}[Saks-Henstock]\label{lem:SH}
Let $K$ be a compact line, and let $G: K \to \mathbb{R}$ be an amenable function. If a function $f: K \to \mathbb{R}$ is $G$-integrable, then for every $\varepsilon > 0$, and every $\varepsilon$-gauge $\delta$ for $f$, the following holds: for every $\delta$-fine tagged system of intervals $\mathcal{S} = \{([a_k, b_k], t_k) : k = 1, \ldots, m\}$ in $K$, we have
\begin{equation}\label{eq:SH_condition}
\left| \sum_{k=1}^m \left( f(t_k)(G(b_k) - G(a_k)) + f(a_k)G(a_k) - \int_{a_k}^{b_k} f\, dG \right) \right| \leq 2\varepsilon.
\end{equation}
\end{lemma}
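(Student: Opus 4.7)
The plan is to embed the given $\delta$-fine tagged system $\mathcal{S}$ into two different $\delta$-fine partitions of $K$ and read the target sum out of their difference, controlled by the $\varepsilon$-gauge property.

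First, I relabel the (possibly degenerate) complementary intervals of $\mathcal{S}$ as $[b_0, a_1], [b_1, a_2], \ldots, [b_m, a_{m+1}]$, with the conventions $b_0 = 0_K$ and $a_{m+1} = 1_K$, and fix an auxiliary $\eta > 0$ to be sent to $0$ at the end. By Theorem~\ref{Thm:Additivity}, $f$ is $G$-integrable on every $[a_k, b_k]$; hence, by refining $\delta$ on $[a_k, b_k]$ (which preserves $\delta$-fineness, see~\ref{it:Gauge1}) to an $\eta/(m+1)$-gauge for $f|_{[a_k,b_k]}$ and invoking Cousin's Lemma~\ref{lem:cousin}, I pick a $\delta$-fine partition $R_k$ of $[a_k, b_k]$ satisfying $\bigl|S(f, G, R_k) - \int_{a_k}^{b_k} f\, dG\bigr| < \eta/(m+1)$. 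On each nondegenerate complementary interval $[b_k, a_{k+1}]$, I pick any $\delta$-fine partition $Q_k$ (again via Lemma~\ref{lem:cousin}). Using the merging construction~\ref{it:Gauge3}, I assemble two $\delta$-fine partitions of $K$:
\[
P \,=\, \widehat{Q_0 \cup \{([a_1, b_1], t_1)\} \cup Q_1 \cup \cdots \cup \{([a_m, b_m], t_m)\} \cup Q_m},
\]
and $P' \,=\, \widehat{Q_0 \cup R_1 \cup Q_1 \cup \cdots \cup R_m \cup Q_m}$.

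Since $\delta$ is an $\varepsilon$-gauge for $f$ and $P, P' \ll \delta$, the triangle inequality gives $|S(f, G, P) - S(f, G, P')| < 2\varepsilon$. On the other hand, $P$ and $P'$ share the same initial term $f(0_K)G(0_K)$ and the same contributions from every gap component, so a direct expansion of the Riemann sums yields the clean identity
\[
S(f, G, P) - S(f, G, P') \,=\, \sum_{k=1}^m \bigl(f(t_k)(G(b_k) - G(a_k)) + f(a_k)G(a_k) - S(f, G, R_k)\bigr),
\]
where the $+f(a_k)G(a_k)$ terms appear because each subpartition $R_k$ of $[a_k, b_k]$ carries its own initial mass $f(a_k)G(a_k)$ in $S(f, G, R_k)$ per Definition~\ref{def:integrability}, which is absent once $R_k$ is inserted into the global sum $S(f, G, P')$. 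Replacing each $S(f, G, R_k)$ by $\int_{a_k}^{b_k} f\, dG$ at total cost below $\eta$, and then letting $\eta \to 0$, produces the desired bound $\leq 2\varepsilon$ on the target sum.

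The principal obstacle is the careful accounting of the boundary corrections $f(a_k)G(a_k)$. They arise precisely because the Riemann-sum convention of Definition~\ref{def:integrability} attaches the initial mass $f(0_I)G(0_I)$ to every partition of every compact subinterval $I$, and this mass is ``lost'' when a smaller partition is embedded into a larger one. Happily, this is exactly the correction $-f(c)G(c)$ already present in the additivity formula of Theorem~\ref{Thm:Additivity}, so the identity above falls into place without further bookkeeping.
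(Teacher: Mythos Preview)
Your proof is correct and close in spirit to the paper's, but with a pleasant twist worth noting. Both arguments complete the tagged system $\mathcal{S}$ to a $\delta$-fine partition of $K$ by filling in the complementary gaps $[b_k,a_{k+1}]$. The paper then fixes \emph{approximating} partitions $Q_k$ on the \emph{gaps} (each an $\varepsilon/(m{+}1)$-gauge partition for $f|_{[b_k,a_{k+1}]}$), forms a single merged partition $P'$, and compares $S(f,G,P')$ directly to $\int_K f\,dG$ via the additivity formula of Theorem~\ref{Thm:Additivity}. You instead pick \emph{arbitrary} $\delta$-fine $Q_k$ on the gaps but \emph{approximating} partitions $R_k$ on the system intervals $[a_k,b_k]$, build two merged partitions $P,P'$ sharing the $Q_k$, and compare them to one another. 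This buys you two small simplifications: the gap contributions cancel automatically, and the correction terms $+f(a_k)G(a_k)$ fall out of the Riemann-sum identity without invoking the full additivity formula for $\int_K f\,dG$. Your use of an auxiliary $\eta\to 0$ is also a clean way to avoid the bookkeeping of apportioning $\varepsilon$ among the $m{+}1$ gaps.
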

\begin{proof}
Given an arbitrary $\varepsilon > 0$, let $\delta$ be an $\varepsilon$-gauge for the function $f$, and let $\mathcal{S} = \{([a_k, b_k], t_k) : k = 1, \ldots, m\}$ be a $\delta$-fine tagged system of $K$. We set $b_0 = 0_K$ and $a_{m+1} = 1_K$. Since $f$ is $G$-integrable, it follows from Propositions \ref{Prop:IndicatorFunctionIntegral} and \ref{prop:IntegrationOnASubinterval} that the restriction $f|_{[b_k, a_{k+1}]}$ is $G$-integrable over $[b_k, a_{k+1}]$ for each $0 \leq k \leq m$.

If $\delta_k$ denotes the gauge on $[b_k, a_{k+1}]$ defined by $\delta_k(x) = \delta(x) \cap [b_k, a_{k+1}]$, we may choose $\eta_k \ll \delta_k$ to be a $\frac{\varepsilon}{m}$-gauge for $f|_{[b_k, a_{k+1}]}$. By applying Lemma \ref{lem:cousin}, we can fix an $\eta_k$-fine partition $Q_k$ of each subinterval $[b_k, a_{k+1}]$. 

On the other hand, by hypothesis, for each $1 \leq k \leq m$, the set $P_k = \{([a_k, b_k], t_k)\}$ forms a $\delta$-fine partition of $[a_k, b_k]$. We then form a $\delta$-fine partition $P^{\prime}$ of $K$ by merging all the partitions $Q_0, P_1, Q_1, \ldots, P_m, Q_m$, and write
\[\sum_{k=1}^m f(t_k)(G(b_k) - G(a_k)) = S(f, G, P^{\prime}) - \sum_{k=0}^{m} S(f|_{[b_k, a_{k+1}]}, G, Q_k) + \sum_{k=1}^{m} f(b_k)G(b_k).\]

From Theorem \ref{Thm:Additivity}, we have:
\[\int_K f\, dG = \sum_{k=1}^{m} \int_{a_k}^{b_k} f\, dG + \sum_{k=0}^{m} \int_{b_k}^{a_{k+1}} f\, dG - \sum_{k=1}^{m} f(a_k)G(a_k) - \sum_{k=1}^{m} f(b_k)G(b_k).\]

Combining all the information above, we obtain:
\begin{align*}
&\left| \sum_{k=1}^m \left( f(t_k)(G(b_k) - G(a_k)) + f(a_k)G(a_k) - \int_{a_k}^{b_k} f\, dG \right) \right| \\
&= \left| S(f, G, P^{\prime}) - \int_K f\, dG - \sum_{k=0}^{m} S(f|_{[b_k, a_{k+1}]}, G, Q_k) + \sum_{k=0}^{m} \int_{b_k}^{a_{k+1}} f\, dG \right| \\
&< \left| S(f, G, P^{\prime}) - \int_K f\, dG \right| + \left| \sum_{k=0}^{m} \left( S(f|_{[b_k, a_{k+1}]}, G, Q_k) - \int_{b_k}^{a_{k+1}} f\, dG \right) \right| \\
&< 2\varepsilon.
\end{align*}
\end{proof}

\begin{corollary}\label{corol:SH}
    Under the assumptions of Lemma \ref{lem:SH},
    \begin{equation}\label{eq:absolute SH_condition}
 \sum_{k=1}^m\left|  f(t_k)(G(b_k) - G(a_k)) + f(a_k)G(a_k)-\int_{a_k}^{b_k} f\, dG  \right| \leq 4\varepsilon.
\end{equation}
\end{corollary}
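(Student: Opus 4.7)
The plan is to reduce the absolute-value sum to two applications of the Saks--Henstock Lemma by the standard sign-splitting trick. For brevity, write
\[
A_k \;=\; f(t_k)(G(b_k)-G(a_k)) + f(a_k)G(a_k) - \int_{a_k}^{b_k} f\,dG,
\]
so that the inequality \eqref{eq:SH_condition} says $|\sum_{k=1}^m A_k|\leq 2\varepsilon$, while what we want to prove is $\sum_{k=1}^m |A_k|\leq 4\varepsilon$.

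First I would partition the index set $\{1,\dots,m\}$ into $K^+ = \{k : A_k \geq 0\}$ and $K^- = \{k : A_k < 0\}$. The key observation is that the subfamilies
\[
\mathcal{S}^+ = \{([a_k,b_k],t_k) : k\in K^+\}, \qquad \mathcal{S}^- = \{([a_k,b_k],t_k) : k\in K^-\}
\]
are themselves $\delta$-fine tagged systems of intervals in $K$: the non-overlapping and ordering conditions in the definition pass automatically to any subcollection, and $\delta$-fineness is inherited componentwise. Thus Lemma~\ref{lem:SH} applies to each of $\mathcal{S}^+$ and $\mathcal{S}^-$ separately, yielding
\[
\Bigl|\sum_{k\in K^+} A_k\Bigr| \leq 2\varepsilon \qquad\text{and}\qquad \Bigl|\sum_{k\in K^-} A_k\Bigr| \leq 2\varepsilon.
\]

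Finally, by the choice of $K^+$ and $K^-$,
\[
\sum_{k=1}^m |A_k| = \sum_{k\in K^+} A_k \;-\; \sum_{k\in K^-} A_k \;\leq\; 2\varepsilon + 2\varepsilon = 4\varepsilon,
\]
which is the desired estimate \eqref{eq:absolute SH_condition}. I don't anticipate any genuine obstacle here: the only point that deserves a brief verbal check is that a subcollection of a $\delta$-fine tagged system of intervals is again a $\delta$-fine tagged system, so that Lemma~\ref{lem:SH} can indeed be invoked on $\mathcal{S}^+$ and $\mathcal{S}^-$ without modifying the gauge $\delta$.
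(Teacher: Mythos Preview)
Your proof is correct and follows essentially the same approach as the paper: split the tagged system into $\mathcal{S}^+$ and $\mathcal{S}^-$ according to the sign of $A_k$, apply Lemma~\ref{lem:SH} to each subfamily, and add. The paper's argument is identical in structure and detail.
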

\begin{proof}Once again, let $\varepsilon > 0$ be arbitrary. Pick an $\varepsilon$-gauge $\delta$ for the function $f$, and let $\mathcal{S} = \{([a_k, b_k], t_k) : k = 1, \ldots, m\}$ be any $\delta$-fine tagged system of $K$. Define $\mathcal{S}^+ = \{([a_k, b_k], t_k) : k \in J^+\}$ as the subset of $\mathcal{S}$ consisting of all components $([a_k, b_k], t_k)$ for which
\[f(t_k)(G(b_k) - G(a_k)) + f(a_k)G(a_k) - \int_{a_k}^{b_k} f \, dG \geq 0,\]
and define $\mathcal{S}^- = \{([a_k, b_k], t_k) : k \in J^-\}$ as the subset consisting of all components for which
\[f(t_k)(G(b_k) - G(a_k)) + f(a_k)G(a_k) - \int_{a_k}^{b_k} f \, dG < 0.\]

Applying Lemma \ref{lem:SH} for both $\mathcal{S}^+$ and $\mathcal{S}^-$ we obtain 
\begin{align*}
 \sum_{k\in J^+}^m&\left|f(t_k)(G(b_k) - G(a_k)) + f(a_k)G(a_k)-\int_{a_k}^{b_k} f\, dG  \right|\\
 &=\left|\sum_{k\in J^+}^mf(t_k)(G(b_k) - G(a_k)) + f(a_k)G(a_k)-\int_{a_k}^{b_k} f\, dG  \right|<2\varepsilon
\end{align*}
and 
\begin{align*}
 \sum_{k\in J^-}^m&\left|f(t_k)(G(b_k) - G(a_k)) + f(a_k)G(a_k)-\int_{a_k}^{b_k} f\, dG  \right|\\
  &=-\left(\sum_{k\in J^-}^mf(t_k)(G(b_k) - G(a_k)) + f(a_k)G(a_k)-\int_{a_k}^{b_k} f\, dG  \right)\\
 &=\left|\sum_{k\in J^-}^mf(t_k)(G(b_k) - G(a_k)) + f(a_k)G(a_k)-\int_{a_k}^{b_k} f\, dG  \right|<2\varepsilon.
\end{align*}
Combining the above relations, we conclude the proof.
\end{proof}

\subsection{Connection with Radon Measures}\label{subsec:Radon} 
From Theorem~\ref{Thm:ContinuousIsGintegrable}, we know that if $G$ is a function of bounded variation and $f: K \to \mathbb{R}$ is continuous, then $f$ is $G$-integrable and satisfies the estimate
\[\left|\int_K f \, dG\right| \leq \sup_{x \in K} |f(x)|\left(|G(0_K)| + \operatorname{Var}(G)\right).\]
This inequality shows that the map $f \mapsto \int_K f \, dG$ defines a bounded linear functional on the Banach space $C(K)$ of continuous functions on $K$, and hence belongs to its dual space $C(K)^*$. By the Riesz--Kakutani Representation Theorem, there exists a unique Radon measure $\mu \in \mathcal{M}(K)$ such that
\begin{equation}\label{Rel-Radon}
\int_K f \, dG = \int_K f \, d\mu
\end{equation}
for every $f \in C(K)$. Thus, each function $G$ of bounded variation naturally induces a unique Radon measure $\mu \in \mathcal{M}(K)$ via this integral representation. Moreover, as shown in \cite[Theorem 2.4.11]{ronchim2021study}, among all functions of bounded variation that give rise to a given measure $\mu$, there exists a unique representative $G \in \operatorname{NBV}(K)$ that induces $\mu$.

More generally, given any real-valued function $G$ defined on a compact line $K$, we may associate to it a set function $\mu_G$ defined on the family of initial segments $\{[0_K, x] : x \in K\}$ by $\mu_G([0_K,x]) = G(x)$.
This rule uniquely determines a finitely additive set function on the algebra generated by the intervals of the form $[0_K, x]$, satisfying the identities
\begin{equation}\label{Rel-Radon2}\mu_G(\{0_K\}) = G(0_K), \quad \mu_G((y,x]) = G(x) - G(y) \quad \text{for } y < x. 
\end{equation}
In particular, for any function $f: K \to \mathbb{R}$ and any partition $P = \{([x_{i-1}, x_i], t_i) : 1 \leq i \leq n\}$ of $K$, the Riemann sum \eqref{eq:riemann} can be rewritten as
\[S(f,G,P) = f(0_K) \mu_G(\{0_K\}) + \sum_{i=1}^n f(t_i) \mu_G((x_{i-1}, x_i]).\]

However, it is not difficult to see that for a general function $G$, the set function $\mu_G$ cannot always be extended to a finite $\sigma$-additive Borel measure on $K$. Even assuming that $G$ is amenable is not sufficient for this purpose, as the following example will illustrate.

\begin{example}\label{ex:[0,omega]}
Let $\sum_{i=0}^\infty a_i$ be a series with nonzero real terms converging to $a_\omega\in \mathbb{R}$. Define $K=[0,\omega]$ and $G:[0,\omega]\to \mathbb{R}$ by
\[
G(n)=\sum_{i=1}^n a_i \quad \text{for } n\in \mathbb{N}, \quad \text{and} \quad G(\omega)=a_\omega.
\]
Note that $G$ is continuous and, in particular, amenable. However, if the series is only conditionally convergent, the measure $\mu_G$ cannot be extended to a signed, finite, $\sigma$-additive measure on the Borel sets. This is due to the fact that $\sum_{a_i>0} a_i=\infty$, which implies that the Borel set $\{i: a_i>0\}$ cannot be $\mu_G$-measurable. On the other hand, if the series is unconditionally convergent, then $G$ belongs to $\mathrm{NBV}(K)$, and it follows that $\mu_G$ is the Radon measure in $\mathcal{M}(K)$ defined by
\[\mu_G(S)=\sum_{i\in S} a_i,\quad \text{for each } S\subset [0,\omega].\]

In either case, consider a function $f:[0,\omega]\to \mathbb{R}$ such that the series $\sum_{i=0}^\infty f(i)a_i$ is conditionally convergent, and $f(\omega)=0$. We claim that $f$ is $G$-integrable and that
\[\int_0^\omega f\,dG=\sum_{i=0}^\infty f(i)a_i.\]
Indeed, write $b=\sum_{i=0}^\infty f(i)a_i$. Let $\varepsilon>0$ and choose $N\in\mathbb{N}$ such that for all $n>N$, holds $|b-\sum_{i=0}^{n} f(i)a_i|<\varepsilon$.
Define a gauge on $[0,\omega]$ by setting $\delta(n)=\{n\}$ for each $n\in[0,\omega)$ and $\delta(\omega)=(N,\omega]$. Let $P$ be an arbitrary $\delta$-fine partition. Note that $P$ must be of the form $\{([0,1],1),\dots,([n-1,n],n),\, ([n,\omega],\omega)\}$, with $n>N$. Computing the Riemann sum we have
\[S(f,G,P)=\sum_{i=0}^{n} f(i)a_i.\]
Thus,
\[\bigl|S(f,G,P)- b\bigr|= \Bigl|b-\sum_{i=0}^{n} f(i)a_i\Bigr|<\varepsilon,\]
which establishes our claim.

We now observe that when $G$ has bounded variation (that is, when the series $\sum a_i$ converges unconditionally), the function $f$ cannot be integrable with respect to the measure $\mu_G$. Otherwise, integral of $f$ over the Borel set $\{i<\omega : f(i)a_i>0\}$ would be infinite, a contradiction. This shows that even though the function $G$ induces the Radon measure $\mu_G$, there exist functions that are $G$-integrable yet not integrable with respect to $\mu_G$ in the usual sense of measure theory. In a subsequent subsection (see Theorem~\ref{Thm:CharacterizationGRadon}), we will show that every Borel-measurable function which is $\mu_G$-integrable is also $G$-integrable, and that in fact
\[\int_K f\,dG \;=\; \int_K f\,d\mu_G.\]
\end{example}

From now on, whenever $G \in \mathrm{NBV}(K)$, we denote by $\mu_G$ the unique Radon measure associated with $G$ such that equation~\eqref{Rel-Radon} holds for every continuous function $f: K \to \mathbb{R}$. In addition to the relations in~\eqref{Rel-Radon2} and the results from Section~\ref{Sec-BasicProperties}, we also have
\[\mu_G([0_K, x)) = L_G(x) \quad \text{and} \quad \mu_G([y, x]) = G(x) - L_G(y), \quad \text{for all } y < x.\]

If $G \in \mathrm{NBV}(K)$ and $\mu_G \in \mathcal{M}(K)$ is the Radon measure associated with $G$, we denote by $|\mu_G|$ the total variation measure of $\mu_G$. Note that when $G$ is positive and nondecreasing, we have $\mu_G = |\mu_G|$. For every subset $S \subset K$, we define the \emph{outer measure} of $S$ as
\[|\mu_G|^*(S) = \inf\left\{\sum_{n=1}^\infty |\mu_G|(I_n) : I_n \in \mathcal{O},\ S \subset \bigcup_n I_n \right\},\]
where $\mathcal{O}$ denotes the collection of open intervals in $K$.

This leads us to the following definition:

\begin{defin}
Let $K$ be a compact line, and let $G: K \to \mathbb{R}$ be an element of $\mathrm{NBV}(K)$. A subset $S \subset K$ is said to be \emph{$G$-null} if $|\mu_G|^*(S) = 0$. A function $f: K \to \mathbb{R}$ is called \emph{$G$-null} if the set $\{x \in K : f(x) \neq 0\}$ is $G$-null.
\end{defin}

\begin{prop}\label{Prop:Gnull}
Let $K$ be a compact line and let $G$ be a function in $\mathrm{NBV}(K)$. If $f: K \to \mathbb{R}$ is a $G$-null function, then $f$ is $G$-integrable and $\int_K f\,dG = 0$. 
\end{prop}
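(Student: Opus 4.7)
The plan is to exhibit, for each $\varepsilon > 0$, a gauge $\delta$ on $K$ with $|S(f, G, P)| < \varepsilon$ whenever $P \ll \delta$; this yields both integrability and $\int_K f \, dG = 0$ at once. The strategy is the standard stratification-and-cover argument from classical Henstock--Kurzweil theory, adapted to handle the boundary term $f(0_K) G(0_K)$ appearing in~\eqref{eq:riemann} and the total variation $|\mu_G|$ in place of Lebesgue measure.

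First I would dispose of the boundary term. Set $N = \{x \in K : f(x) \neq 0\}$, so $|\mu_G|^*(N) = 0$ by hypothesis. If $f(0_K) \neq 0$, then $\{0_K\} \subset N$, whence $|\mu_G|(\{0_K\}) \leq |\mu_G|^*(N) = 0$ by monotonicity and countable subadditivity of the outer measure; since $G(0_K) = \mu_G(\{0_K\})$ and $|\mu_G(\{0_K\})| \leq |\mu_G|(\{0_K\})$, this forces $G(0_K) = 0$. In either case, $f(0_K) G(0_K) = 0$. Next I would stratify $N$ by the size of $|f|$: let $E_n = \{x \in K : n - 1 < |f(x)| \leq n\}$, so that $N = \bigcup_{n \geq 1} E_n$ and $|\mu_G|^*(E_n) = 0$ for every $n$. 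By the definition of the outer measure, for each $n \geq 1$ I would choose a countable cover of $E_n$ by open intervals $\{I_{n,k}\}_{k \in \mathbb{N}}$ with $\sum_k |\mu_G|(I_{n,k}) < \varepsilon / (n\, 2^n)$.

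Now I would build the gauge: for each $x \in E_n$, pick $k(x)$ with $x \in I_{n, k(x)}$ and set $\delta(x) = I_{n, k(x)}$; for $x$ with $f(x) = 0$, let $\delta(x)$ be any open interval containing $x$ (e.g.\ $K$ itself). Given a $\delta$-fine partition $P = \{([x_{i-1}, x_i], t_i) : 1 \leq i \leq m\}$, split the indices as $J_0 = \{i : f(t_i) = 0\}$ and $J_n = \{i : t_i \in E_n\}$ for $n \geq 1$; indices in $J_0$ contribute nothing to the Riemann sum. For each $n \geq 1$ and $i \in J_n$, $\delta$-fineness gives $(x_{i-1}, x_i] \subset I_{n, k(t_i)}$; since the half-open intervals $(x_{i-1}, x_i]$ are pairwise disjoint, countable subadditivity yields
\[
\sum_{i \in J_n} |G(x_i) - G(x_{i-1})| \leq \sum_{i \in J_n} |\mu_G|((x_{i-1}, x_i]) \leq |\mu_G|\biggl(\bigcup_k I_{n,k}\biggr) \leq \sum_k |\mu_G|(I_{n,k}) < \frac{\varepsilon}{n\, 2^n}.
\]
Since $|f(t_i)| \leq n$ on $E_n$, the contribution of $J_n$ to the Riemann sum is bounded in absolute value by $\varepsilon / 2^n$, and summing over $n \geq 1$ gives $|S(f, G, P)| \leq \varepsilon$.

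There is no conceptual obstacle here; the difficulty is just bookkeeping. The subtle points are recognizing that the extra $f(0_K) G(0_K)$ term in~\eqref{eq:riemann} is automatically annihilated by the $G$-null hypothesis, and then exploiting the pairwise disjointness of the half-open intervals $(x_{i-1}, x_i]$ in order to dominate the sum of their $|\mu_G|$-measures by the $|\mu_G|$-measure of the preassigned open cover.
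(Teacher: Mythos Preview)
Your proof is correct and follows essentially the same stratification-and-cover approach as the paper's proof: handle the boundary term $f(0_K)G(0_K)$ via $|\mu_G|(\{0_K\})=0$, stratify the support of $f$ into the level sets $E_n$, cover each by open intervals of small total $|\mu_G|$-measure, and use these intervals to define the gauge. The only cosmetic difference is that the paper groups the tags in $E_n$ further by the index $k$ of the covering interval $I_{n,k}$, whereas you invoke the pairwise disjointness of the half-open intervals $(x_{i-1},x_i]$ directly to bound their total $|\mu_G|$-measure by $|\mu_G|\bigl(\bigcup_k I_{n,k}\bigr)$; your bookkeeping is in fact slightly cleaner.
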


\begin{proof}
Let $\mu_G$ be the Radon measure associated with the function $G$. Since $\mu_G(\{0_K\}) = G(0_K)$, we have either $f(0_K) = 0$ or $G(0_K) = 0$. Let $Z = \{x \in K : f(x) \neq 0\}$, and define $Z_n = \{x \in Z : n - 1 \leq |f(x)| < n\}$. Note that $Z$ is the disjoint union of the sets $Z_n$. For each $n \in \mathbb{N}$, there exists a sequence of open intervals $(I_{nk})_k$ such that $Z_n \subset \bigcup_k I_{nk}$ and $\sum_k |\mu_G|(I_{nk}) < \frac{\varepsilon}{n 2^n}$. 

For each $x \in Z$, denote by $n(x)$ the unique natural number $n$ such that $x \in Z_n$, and let $k(x)$ be the smallest index $k$ such that $x \in I_{n(x)k}$. Define a gauge $\delta$ on $K$ by setting
\[\delta(x) = 
\begin{cases}
    K, & \text{if } x \in K \setminus Z, \\
    I_{n(x)k(x)}, & \text{if } x \in Z.
\end{cases}\]

Let $P = \{([x_{i-1}, x_i], t_i) : 1 \leq i \leq n\}$ be an arbitrary $\delta$-fine partition of $K$. Observe that whenever $t_i \in K \setminus Z$, the corresponding Riemann summand is zero. Fix $m \in \mathbb{N}$. Then for each $k \in \mathbb{N}$, whenever $t_i \in Z_m \cap I_{mk}$, we have $(x_{i-1}, x_i] \subset I_{mk}$, so
\[\left| \sum_{t_i \in Z_m \cap I_{mk}} G(x_i) - G(x_{i-1}) \right| 
\leq \sum_{t_i \in Z_m \cap I_{mk}} |\mu_G|(J_i) 
\leq |\mu_G|(I_{mk}).\]

Since $Z_m \subset \bigcup_k I_{mk}$, and $|f(t_i)| < m$ whenever $t_i \in Z_m$, we have
\[\left| \sum_{t_i \in Z_m} f(t_i)(G(x_i) - G(x_{i-1})) \right| 
\leq m \sum_{t_i \in Z_m} |\mu_G|(J_i) 
\leq m \cdot \frac{\varepsilon}{m 2^m} = \frac{\varepsilon}{2^m}.\]

It follows that the total contribution of the Riemann summands with $t_i \in Z$ is at most $\sum_m \frac{\varepsilon}{2^m} = \varepsilon$. We conclude that $|S(f,G,P)| \leq \varepsilon$, which implies that $f$ is $G$-integrable and that $\int_K f\,dG = 0$.
\end{proof}

Let $f_1, f_2, G: K \to \mathbb{R}$, where $K$ is a compact line and $G \in \mathrm{NBV}(K)$. We say that $f_1$ and $f_2$ are equal \emph{$G$-almost everywhere} if the set $\{ x \in K : f_1(x) \neq f_2(x) \}$ is $G$-null. More generally, we say that a certain property related to a subset of $K$ holds $G$-almost everywhere if the set of points where the property fails is $G$-null.

The example above, combined with the linearity of the integral (Proposition~\ref{Prop:BilinearityOfHKIntegral}), yields the following result.

\begin{prop}\label{prop:Gae}
    Let $K$ be a compact line, let $G \in \mathrm{NBV}(K)$, and let $f_1, f_2: K \to \mathbb{R}$ be functions that are equal $G$-almost everywhere. Then $f_1$ is $G$-integrable if and only if $f_2$ is $G$-integrable, in which case the values of their integrals with respect to $G$ are equal.
\end{prop}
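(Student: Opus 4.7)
The plan is to reduce the statement to the two results just proved, namely Proposition \ref{Prop:Gnull} (null functions are $G$-integrable with zero integral) and Proposition \ref{Prop:BilinearityOfHKIntegral} (bilinearity of the integral). The key observation is that the difference $h = f_1 - f_2$ is supported on the set $\{x \in K : f_1(x) \neq f_2(x)\}$, which by hypothesis is $G$-null. Hence $h$ itself is a $G$-null function.

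First I would define $h := f_1 - f_2$ and verify that $\{x \in K : h(x) \neq 0\} \subseteq \{x \in K : f_1(x) \neq f_2(x)\}$, so that $h$ is $G$-null in the sense introduced before Proposition \ref{Prop:Gnull}. Applying that proposition directly gives that $h$ is $G$-integrable with $\int_K h\, dG = 0$.

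Next, suppose $f_1$ is $G$-integrable. Writing $f_2 = f_1 + (-1) h$ and invoking the linearity established in Proposition \ref{Prop:BilinearityOfHKIntegral}, we conclude that $f_2$ is $G$-integrable and that
\begin{equation*}
\int_K f_2\, dG = \int_K f_1\, dG - \int_K h\, dG = \int_K f_1\, dG.
\end{equation*}
The converse direction is obtained symmetrically by writing $f_1 = f_2 + h$. This establishes both the equivalence of integrability and the equality of integrals.

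I do not foresee a genuine obstacle here, since all the technical work has already been absorbed into Proposition \ref{Prop:Gnull}; the only mild point requiring care is to notice that the containment of supports may be strict (for instance, at points where $f_1(x) = f_2(x) \neq 0$), but this is harmless because we only need a superset of the support of $h$ to be $G$-null, which is exactly what the definition of $G$-null set provides through the monotonicity of the outer measure $|\mu_G|^*$.
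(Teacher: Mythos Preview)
Your proposal is correct and follows exactly the approach indicated in the paper: the result is obtained immediately from Proposition~\ref{Prop:Gnull} applied to the $G$-null difference $h=f_1-f_2$, together with the linearity of the integral from Proposition~\ref{Prop:BilinearityOfHKIntegral}.
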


In particular, this means that a function only needs to be defined $G$-almost everywhere in order for its $G$-integrability to be studied.

In what follows, we develop a Vitali-type covering theorem that will aid in proving that certain sets are $G$-null and will be instrumental in establishing the version of the Fundamental Theorem of Calculus given in Theorem~\ref{Thm:differentiation}.

\begin{lemma}\label{lem:vitali}
Let $K$ be a compact line equipped with a positive Radon measure $\mu$, and let $\mathcal F$ be a collection of intervals in $K$. Then there exists a subcollection $\mathcal G$ of pairwise disjoint elements of $\mathcal F$ with the following properties: 
    \begin{enumerate}[label=\textnormal{(F\alph*)}]
        \item \label{it:Admissible1} For every $I \in \mathcal{F}$, there exists $J \in \mathcal{G}$ such that $I \cap J \neq \emptyset$ and $2\mu(J) \geq \mu(I)$.
        \item \label{it:Admissible2} There exists a mapping $\varphi: \mathcal{G} \to \{\text{intervals in } K\}$ such that, for every $J \in \mathcal{G}$, $\mu(\varphi(J)) \leq 5\mu(J)$, and for every $I \in \mathcal{F}$, if $I \cap J \neq \emptyset$ and $2\mu(J) \geq \mu(I)$, then $I \subset \varphi(J)$.
    \end{enumerate}
\end{lemma}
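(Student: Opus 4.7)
The plan is to adapt the classical Vitali covering argument to the compact-line setting, exploiting that $\mu(K)<\infty$ (since $\mu$ is a Radon measure on the compact space $K$). The family $\mathcal{G}$ will be assembled in two stages: a countable greedy selection capturing the intervals of positive measure, followed by a Zorn's Lemma argument to handle the measure-zero remainder.

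For the first stage, set $\mathcal{F}_{0}=\mathcal{F}$ and $M_{0}=\sup\{\mu(I):I\in\mathcal{F}_{0}\}$, and recursively, as long as $M_{n}>0$, pick $J_{n+1}\in\mathcal{F}_{n}$ with $\mu(J_{n+1})\geq M_{n}/2$, then put $\mathcal{F}_{n+1}=\{I\in\mathcal{F}_{n}:I\cap J_{n+1}=\emptyset\}$ and $M_{n+1}=\sup\{\mu(I):I\in\mathcal{F}_{n+1}\}$. Since the $J_{n}$ are pairwise disjoint and $\mu(K)<\infty$, we have $\sum_{n}\mu(J_{n})<\infty$, whence $\mu(J_{n})\to 0$ and therefore $M_{n-1}\leq 2\mu(J_{n})\to 0$. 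Set $\mathcal{G}_{1}=\{J_{n}\}$. In the second stage, consider $\mathcal{F}'=\{I\in\mathcal{F}:I\cap J=\emptyset\text{ for all }J\in\mathcal{G}_{1}\}$; the estimate above forces $\mu(I)=0$ for every $I\in\mathcal{F}'$. Applying Zorn's Lemma to the pairwise-disjoint subfamilies of $\mathcal{F}'$ yields a maximal such family $\mathcal{G}_{2}$, and we take $\mathcal{G}=\mathcal{G}_{1}\cup\mathcal{G}_{2}$. Property (Fa) then follows directly: for $I\in\mathcal{F}$, if $I$ meets some $J_{n}\in\mathcal{G}_{1}$, taking the least such $n_{0}$ gives $\mu(I)\leq M_{n_{0}-1}\leq 2\mu(J_{n_{0}})$; otherwise $I\in\mathcal{F}'$, and maximality of $\mathcal{G}_{2}$ forces $I$ to meet some $H\in\mathcal{G}_{2}$, with the measure inequality holding trivially since $\mu(I)=0$.

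For (Fb), define
\[\varphi(J)=\bigcup\bigl\{I\in\mathcal{F}:I\cap J\neq\emptyset,\ \mu(I)\leq 2\mu(J)\bigr\}.\]
As every member of this union is an interval meeting the fixed interval $J$, and $J$ itself lies in the union (when $J\neq\emptyset$), $\varphi(J)$ is convex and hence an interval in $K$; the containment clause of (Fb) is then built into the definition. The crux is the estimate $\mu(\varphi(J))\leq 5\mu(J)$. Writing $a_{J}=\inf J$ and $b_{J}=\sup J$, convexity of $J$ allows us to split $\varphi(J)\setminus J$ into its left part $L$ and right part $R$. For $x\in L$, fix an admissible $I\ni x$; since $I$ is an interval containing $x$ and some $y\in J$ with $y\geq a_{J}$, we have $[x,a_{J}]\subset I$, whence $\mu([x,a_{J}])\leq\mu(I)\leq 2\mu(J)$. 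Taking $\ell=\inf L$ and invoking continuity from above of the finite measure $\mu$ gives $\mu(L)\leq 2\mu(J)$, and symmetrically $\mu(R)\leq 2\mu(J)$; therefore $\mu(\varphi(J))=\mu(L)+\mu(J)+\mu(R)\leq 5\mu(J)$. For $J\in\mathcal{G}_{2}$ the same argument yields $\mu(\varphi(J))=0=5\mu(J)$.

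The step we expect to require the most care is the measure bound in (Fb): when $J$ is half-open or when $a_{J}$, $b_{J}$ happen to be atoms of $\mu$ lying outside $J$, the disjoint decomposition $\varphi(J)=L\sqcup J\sqcup R$ must be set up precisely, and the continuity-from-above step must cleanly accommodate the case $\inf L\notin L$ via an approximating sequence $x_{n}\searrow\ell$ with $x_{n}\in L$.
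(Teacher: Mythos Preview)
Your overall strategy is sound and genuinely different from the paper's. Where the paper stratifies $\mathcal{F}$ into dyadic measure layers $\mathcal{F}_n=\{I:\mu(K)/2^n<\mu(I)\le\mu(K)/2^{n-1}\}$ and applies Zorn's Lemma at every level, you run the classical greedy selection (pick $J_{n+1}$ with $\mu(J_{n+1})\ge M_n/2$) to produce a countable $\mathcal{G}_1$, then invoke Zorn only once for the measure-zero remainder. Both routes yield (Fa) cleanly; yours is closer to the textbook Vitali argument, while the paper's avoids the dependent choices. For (Fb) you and the paper define $\varphi(J)$ identically, and your convexity argument for $\varphi(J)$ is fine.

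There is, however, a genuine gap in your bound $\mu(L)\le 2\mu(J)$. You propose to pass from $\mu([x,a_J])\le 2\mu(J)$ for all $x\in L$ to the limit via ``an approximating sequence $x_n\searrow\ell$ with $x_n\in L$.'' In an arbitrary compact line this sequence need not exist: compact lines are not first-countable in general (think of $[0,\omega_1]$ with the order reversed, where no sequence from the right reaches the minimum). So sequential continuity of $\mu$ is unavailable at this step. The fix is exactly what the paper uses: inner regularity of the Radon measure. For any compact $C\subset L$ one has $c:=\min C\in L$, hence $C\subset[c,a_J]$ and $\mu(C)\le\mu([c,a_J])\le 2\mu(J)$; taking the supremum over compact $C\subset L$ gives $\mu(L)\le 2\mu(J)$, and likewise for $R$. (The paper packages this slightly differently, bounding $\mu([x,y])\le 5\mu(J)$ for every closed $[x,y]\subset\varphi(J)$ and then citing inner regularity once.) With this replacement your argument goes through; the phrase ``continuity from above'' should also be corrected, since the family $[x,a_J]$ is increasing as $x$ decreases.
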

\begin{proof}
The collection $\mathcal F$ can be decomposed as the disjoint union of $\mathcal F_n$, $n\in\mathbb N\cup\{\infty\}$, where
\[\mathcal F_n = \left\{I\in\mathcal F: \frac{\mu(K)}{2^n} < \mu(I) \leq \frac{\mu(K)}{2^{n-1}}\right\}, \quad n\in\mathbb N,\]
and
\[\mathcal F_\infty = \{I\in\mathcal F: \mu(I)=0\}.\]
We define $\mathcal G_n \subset \mathcal F_n$ recursively. First, let $\mathcal G_1 \subset \mathcal F_1$ be a maximal pairwise disjoint subcollection, which exists by Zorn's Lemma. Given $k \geq 2$, suppose that $\mathcal G_j$ has been defined for all $j \in \{1,\dots,k-1\}$, and consider
\[\mathcal H_k = \left\{I \in \mathcal F_k : \forall J \in \mathcal G_1 \cup \dots \cup \mathcal G_{k-1},\, I \cap J = \emptyset \right\}.\]
Apply Zorn's Lemma again to obtain a maximal pairwise disjoint subcollection $\mathcal G_k \subset \mathcal H_k$.

Now consider
\[\mathcal H_\infty = \left\{I \in \mathcal F_\infty : \forall J \in \bigcup_{n\in\mathbb N} \mathcal G_n,\, I \cap J = \emptyset \right\},\]
and once more use Zorn's Lemma to obtain a maximal pairwise disjoint subcollection 
$\mathcal G_\infty \subset \mathcal H_\infty$.

Define $\mathcal G := \bigcup_{n \in \mathbb N \cup \{\infty\}} \mathcal G_n$. We now verify that $\mathcal G$ satisfies conditions \ref{it:Admissible1} and \ref{it:Admissible2}.

Fix $n \in \mathbb N$ and $I \in \mathcal F_n$. Then one of the following holds:
\begin{enumerate}
    \item[(i)] $I \in \mathcal G_n$,
    \item[(ii)] $I \in \mathcal H_n \setminus \mathcal G_n$, hence $I$ intersects some $J \in \mathcal G_n$,
    \item[(iii)] $I \notin \mathcal H_n$, hence $I$ intersects some $J \in \mathcal G_1 \cup \dots \cup \mathcal G_{n-1}$.
\end{enumerate}
In all cases, $I$ intersects some $J \in \mathcal G_1 \cup \dots \cup \mathcal G_n$. On one hand,
\[\mu(J) > \frac{\mu(K)}{2^n};\]
on the other hand, $\mu(I) \leq \mu(K)/2^{n-1}$. From these, we deduce that
\begin{equation}\label{eq:Vitali2}
\mu(J) > \frac{\mu(I)}{2}.
\end{equation}

A similar reasoning applies for $I \in \mathcal F_\infty$, where we replace the inequality in \eqref{eq:Vitali2} with $\geq$ (since $\mu(J)$ may be zero). This shows that $\mathcal G$ satisfies \ref{it:Admissible1}.

To verify \ref{it:Admissible2}, define
\[\varphi(J) := J \cup \bigcup \left\{ I \in \mathcal{F} : I \cap J \neq \emptyset \text{ and } 2\mu(J) \geq \mu(I) \right\}.\]
It is clear from the definition that whenever $I \in \mathcal{F}$ intersects $J \in \mathcal{G}$ and $2\mu(J) \geq \mu(I)$, we have $I \subset \varphi(J)$.

Now fix $J \in \mathcal{G}$, and let $[x,y] \subset \varphi(J)$. Suppose $x \in I_1$ and $y \in I_2$, with $I_1, I_2 \in \mathcal{F}$ intersecting $J$ and satisfying $2\mu(J) \geq \mu(I_1), \mu(I_2)$. Then $[x,y] \subset I_1 \cup J \cup I_2$,
and so
\begin{align*}
    \mu([x,y]) &\leq \mu([x,y] \cap I_1) + \mu([x,y] \cap J) + \mu([x,y] \cap I_2) \\
    &\leq \mu(I_1) + \mu(J) + \mu(I_2) \\
    &\leq 2\mu(J) + \mu(J) + 2\mu(J) = 5\mu(J).
\end{align*}
Since $\mu$ is inner regular, we conclude that $\mu(\varphi(J)) \leq 5\mu(J)$, completing the proof.
\end{proof}

\begin{defin}\label{Def:AdmissibleCovering}
Let $K$ be a compact line, and let $A$ be a subset of $K$. An \emph{admissible covering} of $A$ is a family $\mathcal{F}$ of intervals of $K$ such that for every $a \in A$ and every finite subcollection $\{J_1,\ldots,J_k\}$ of pairwise disjoint elements of $\mathcal{F}$, if $a\in A\setminus \bigcup_{i=1}^k J_i$, then there exists an interval $I \in \mathcal{F}$ such that $a \in I$ and $I\cap \big(\bigcup_{i=1}^k J_i\big)=\emptyset$.
\end{defin}

\begin{theorem}\label{Thm:vitali}
Let $K$ be a compact line equipped with a positive Radon measure $\mu$, and denote by $\mu^*$ the outer measure induced by $\mu$. Let $A$ be a subset of $K$, and let $\mathcal{F}$ be an admissible covering of $A$. Then, for every $\varepsilon > 0$, there exists a finite subcollection $\mathcal{H} \subset \mathcal{F}$ consisting of pairwise disjoint elements such that
\begin{equation}\label{eq:thesis Vitali}
\mu^*(A \setminus \bigcup \mathcal{H}) < \varepsilon.
\end{equation}
\end{theorem}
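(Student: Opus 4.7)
The plan is to apply Lemma~\ref{lem:vitali} to $\mathcal{F}$, obtaining a pairwise disjoint subcollection $\mathcal{G} \subset \mathcal{F}$ together with the enlargement map $\varphi$ satisfying $\mu(\varphi(J)) \leq 5\mu(J)$ for every $J \in \mathcal{G}$. Since $\mu(K) < \infty$ and $\mathcal{G}$ consists of pairwise disjoint sets, only countably many of its elements can carry positive measure; I enumerate them as $J_1, J_2, \ldots$. The series $\sum_n \mu(J_n) \leq \mu(K)$ converges, so given $\varepsilon > 0$ I would choose $N$ with $5\sum_{n>N} \mu(J_n) < \varepsilon$ and set $\mathcal{H} := \{J_1,\ldots,J_N\}$, which is a finite pairwise disjoint subfamily of $\mathcal{F}$.

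The heart of the argument is to show that, up to a null correction, $A\setminus \bigcup\mathcal{H} \subset \bigcup_{n>N}\varphi(J_n)$. Given $a \in A\setminus \bigcup\mathcal{H}$, admissibility of $\mathcal{F}$ applied to the finite disjoint family $\mathcal{H}$ yields an interval $I\in\mathcal{F}$ with $a\in I$ and $I$ disjoint from $\bigcup\mathcal{H}$. Property~\ref{it:Admissible1} of Lemma~\ref{lem:vitali} then produces $J\in\mathcal{G}$ with $I\cap J \neq \emptyset$ and $2\mu(J)\geq \mu(I)$, and \ref{it:Admissible2} gives $I\subset \varphi(J)$. When $\mu(J)>0$, $J$ must be one of the $J_n$, and the disjointness of $I$ from $J_1,\ldots,J_N$ forces $n>N$; hence $a\in \varphi(J_n)$. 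Countable subadditivity of $\mu^*$ together with the enlargement bound then gives
\[
\mu^*\Bigl(A\setminus \bigcup\mathcal{H}\Bigr) \leq \sum_{n>N}\mu(\varphi(J_n)) \leq 5\sum_{n>N}\mu(J_n) < \varepsilon,
\]
which is exactly \eqref{eq:thesis Vitali}.

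The main subtlety I expect to face is the case $\mu(J)=0$ in \ref{it:Admissible1}: such a $J$ is not in the enumerated list $\{J_n\}$, and since $\mathcal{G}$ may contain uncountably many null intervals, countable subadditivity does not apply directly to their enclosures. On the other hand, \ref{it:Admissible2} forces $\mu^*(\varphi(J))\leq 5\mu(J)=0$ for each such $J$, so every point $a$ in this pathological case is trapped in a $\mu$-null set. The technical core of the proof will therefore be to argue, along the same inner-regularity lines that underlie the $5\mu(J)$ estimate in Lemma~\ref{lem:vitali}, that the aggregate contribution of these null enclosures to $\mu^*(A\setminus\bigcup\mathcal{H})$ vanishes and hence does not spoil the bound above.
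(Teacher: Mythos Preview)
Your plan matches the paper's proof closely. The paper also applies Lemma~\ref{lem:vitali}, extracts a finite subfamily of $\mathcal{G}$ (it uses a threshold: choose $\delta>0$ with $\sum_{J\in\mathcal{G}_{<\delta}}\mu(J)<\varepsilon/5$ and set $\mathcal{H}=\mathcal{G}_{\geq\delta}=\{J\in\mathcal{G}:\mu(J)\geq\delta\}$, which is finite since $\sum_{J\in\mathcal{G}}\mu(J)\le\mu(K)$), invokes admissibility for each $a\in A\setminus\bigcup\mathcal{H}$ to obtain $I_a\in\mathcal{F}$ disjoint from $\bigcup\mathcal{H}$, and then uses \ref{it:Admissible1} and \ref{it:Admissible2} to trap $a$ in some $\varphi(J_a)$ with $J_a\in\mathcal{G}_{<\delta}$. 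Your enumeration of the positive-measure elements and choice of $N$ versus the paper's threshold cut are purely cosmetic variants of the same selection.

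Regarding the null-measure subtlety you raise: the paper does not isolate it. It writes the single chain
\[
\mu^*\Bigl(A\setminus\bigcup\mathcal{H}\Bigr)\le\mu^*\Bigl(\bigcup_{J\in\mathcal{J}}\varphi(J)\Bigr)\le\sum_{J\in\mathcal{J}}\mu(\varphi(J))\le 5\sum_{J\in\mathcal{J}}\mu(J)<\varepsilon
\]
over the entire family $\mathcal{J}=\{J_a:a\in A\setminus\bigcup\mathcal{H}\}\subset\mathcal{G}_{<\delta}$, null members included, with the sum over a possibly uncountable index interpreted as the supremum of finite subsums. So the step you flag as the ``technical core'' is precisely the inequality $\mu^*\bigl(\bigcup_{J\in\mathcal{J}}\varphi(J)\bigr)\le\sum_{J\in\mathcal{J}}\mu(\varphi(J))$, which the paper records without further comment. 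Your concern is therefore not a divergence from the paper but an explicit acknowledgment of a point the paper's own proof treats as routine; if you want to fill it, you will be going beyond what the paper spells out rather than reconstructing a hidden argument.
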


\begin{proof}
Let $\varepsilon > 0$, and consider the pairwise disjoint subcollection $\mathcal{G} \subset \mathcal{F}$ obtained as in Lemma~\ref{lem:vitali}. Since $\mathcal{G}$ is pairwise disjoint, we have $\sum_{J \in \mathcal{G}} \mu(J) \leq \mu(K) < \infty$,
where $\sum_{J \in \mathcal{G}} \mu(J) = \sup\left\{\sum_{J \in \mathcal{G}'} \mu(J) : \mathcal{G}' \subset \mathcal{G},\ \mathcal{G}' \text{ finite} \right\}$. Choose $\delta > 0$ such that
\[\sum_{J \in \mathcal{G}_{<\delta}} \mu(J) < \frac{\varepsilon}{5},\]
where $\mathcal{G}_{<\delta} = \{J \in \mathcal{G} : \mu(J) < \delta\}$. Let $\mathcal{G}_{\geq \delta} = \{J \in \mathcal{G} : \mu(J) \geq \delta\}$ and note that this set is finite. If $A \setminus \bigcup \mathcal{G} = \emptyset$, i.e., if $\mathcal{G}$ covers $A$, then $A \setminus \bigcup \mathcal{G}_{\geq \delta} \subset \bigcup \mathcal{G}_{<\delta},$
so
\[\mu^*(A \setminus \bigcup \mathcal{G}_{\geq \delta}) \leq \mu^*(\bigcup \mathcal{G}_{<\delta}) \leq \sum_{J \in \mathcal{G}_{<\delta}} \mu(J) < \frac{\varepsilon}{5}.\]
Hence, inequality~\eqref{eq:thesis Vitali} holds with $\mathcal{H} = \mathcal{G}_{\geq \delta}$.

Now suppose that $A \setminus \bigcup \mathcal{G} \neq \emptyset$. Since $\mathcal{G}_{\geq \delta}$ is finite, by the definition of admissible covering, for each $a \in A \setminus \bigcup \mathcal{G}_{\geq \delta}$ there exists an interval $I_a \in \mathcal{F}$ such that $a \in I_a$ and $I_a\cap\big(\bigcup \mathcal{G}_{\geq \delta}\big)=\emptyset$. By property~\ref{it:Admissible1}, we can find $J_a \in \mathcal{G}$ such that $I_a \cap J_a \neq \emptyset$ and $2\mu(J_a) \geq \mu(I_a)$. Moreover, we have $J_a \in \mathcal{G}_{<\delta}$. Define
\[
\mathcal{J} = \{J_a : a \in A \setminus \bigcup \mathcal{G}_{\geq \delta}\},
\]
and note that $\mathcal{J} \subset \mathcal{G}_{<\delta}$. By property~\ref{it:Admissible2}, we obtain
\[
A \setminus \bigcup \mathcal{G}_{\geq \delta} \subset \bigcup_{a \in A \setminus \bigcup \mathcal{G}_{\geq \delta}} I_a \subset \bigcup_{a \in A \setminus \bigcup \mathcal{G}_{\geq \delta}} \varphi(J_a) = \bigcup_{J \in \mathcal{J}} \varphi(J),
\]
and therefore
\[
\mu^*(A \setminus \bigcup \mathcal{G}_{\geq \delta}) \leq \mu^*\left(\bigcup_{J \in \mathcal{J}} \varphi(J)\right) \leq \sum_{J \in \mathcal{J}} \mu(\varphi(J)) \leq \sum_{J \in \mathcal{J}} 5\mu(J) < \varepsilon.
\]
Thus, setting again $\mathcal{H} = \mathcal{G}_{\geq \delta}$ concludes the proof.
\end{proof}

\section{Calculus on compact lines}\label{sec:Calculus}

\subsection{Absolutely integrable functions}In this subsection, our goal is to establish important properties of primitive functions under the assumption that the integrator $G$ is at least \emph{amenable}. This is the content of Theorem~\ref{Thm:Amenability} below. 

In addition, we provide a characterization of absolutely integrable functions. We say that a function $f$ is \emph{absolutely $G$-integrable} if $|f|$ is also $G$-integrable. As in the case of the classical Kurzweil--Stieltjes integral theory, it is not true in general that every $G$-integrable function is absolutely $G$-integrable, see Example \ref{ex:[0,omega]}. In Theorem \ref{Thm:CaracterizacaodasFuncoesAbsolutamenteIntegraveis} we adapt the well-known \cite[Theorem~7.5]{bartle2001modern} to our integration setting.

\begin{theorem}[Amenability of the indefinite integral]\label{Thm:Amenability}
    Let $K$ be a compact line and let $G: K \to \mathbb{R}$ be an amenable function. If $f:K\to \mathbb{R}$ is $G$-integrable, then the function $F:K\to \mathbb{R}$ defined by the formula
    \[F(x) = \int_{0_K}^x f\, dG\]
    is a well defined amenable function. Moreover, for every $c\in K$ holds
    \[F(c)-L_F(c)=f(c)\big(G(c)-L_G(c)\big).\] 
In particular, if $G$ is continuous at $c$, then $F$ is continuous at $c$.   
\end{theorem}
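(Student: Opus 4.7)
The plan is to derive everything from a single Saks--Henstock estimate applied to one-component tagged systems. Well-definedness of $F$ is immediate: by Propositions \ref{Prop:IndicatorFunctionIntegral} and \ref{prop:IntegrationOnASubinterval}, the restriction $f|_{[0_K,x]}$ is $G$-integrable on $[0_K,x]$ for every $x \in K$.

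The key step is the following estimate. Fix $\varepsilon > 0$ and let $\delta$ be an $\varepsilon$-gauge for $f$. Applying Lemma \ref{lem:SH} to the singleton tagged system $\{([c,y],c)\}$ (for $y \geq c$ with $(c,y] \subset \delta(c)$) or $\{([y,c],c)\}$ (for $y \leq c$ with $(y,c] \subset \delta(c)$), and combining with the additivity formula of Theorem \ref{Thm:Additivity}, yields
\[\left|\bigl(F(y)-F(c)\bigr) - f(c)\bigl(G(y)-G(c)\bigr)\right| \leq 2\varepsilon,\]
valid on both sides of $c$ whenever $y$ lies in $\delta(c)$.

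With this in hand I would treat each point $c \in K$ by cases. If $c$ is right-isolated, then right-continuity of $F$ at $c$ is automatic since $c$ has a right-neighborhood reducing to $\{c\}$ in $[c,1_K]$. If $c$ is right-dense, the amenability of $G$ gives a right-neighborhood on which $|f(c)(G(y)-G(c))| < \varepsilon$, so the key estimate forces $|F(y)-F(c)|$ to be small for $y$ close to $c$ from the right; this proves right-continuity at $c$ and in particular the existence of the right-hand limit, so $F$ is regulated on the right at $c$. For the left-hand behavior, if $c$ is left-isolated I apply Theorem \ref{Thm:Additivity} on $[0_K,c^-]\cup[c^-,c]$ and compute the integral on the two-point subinterval $[c^-,c]$ directly (using a gauge that forces the tag to be $c$), obtaining $F(c)-F(c^-) = f(c)(G(c)-G(c^-))$, which is the jump formula since $L_F(c) = F(c^-)$ and $L_G(c) = G(c^-)$. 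If $c$ is left-dense, then letting $y \nearrow c$ in the key estimate and using $G(y) \to L_G(c)$ (from amenability of $G$) shows that $\lim_{y \nearrow c} F(y)$ exists and equals $F(c) - f(c)(G(c)-L_G(c))$, which yields both left-regulation of $F$ at $c$ and the jump formula.

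The continuity consequence is then automatic: if $G$ is continuous at $c$, then $L_G(c)=G(c)$, so the jump formula forces $L_F(c)=F(c)$, i.e.\ $F$ is left-continuous at $c$; combined with the already-established right-continuity this gives continuity of $F$ at $c$. The main bookkeeping subtlety I anticipate lies in the direct computation on the two-point subinterval in the left-isolated case, where the partition structure is rigid and the gauge must be selected carefully to single out the correct tag; everything else is a routine interplay between Saks--Henstock and amenability.
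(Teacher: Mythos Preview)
Your proposal is correct and follows essentially the same approach as the paper: Saks--Henstock applied to one-component tagged systems, combined with the additivity formula of Theorem~\ref{Thm:Additivity}, drives every case. Your consolidation of the two-sided bound into a single ``key estimate'' is a mild streamlining of the paper's case-by-case setup, and your direct computation on the two-point interval $[c^-,c]$ in the left-isolated case matches what the paper asserts (in fact your key estimate already covers that case too, since $(c^-,c]=\{c\}\subset\delta(c)$ automatically).
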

\begin{proof}
If $f$ is $G$-integrable, then according to Propositions \ref{Prop:IndicatorFunctionIntegral} and \ref{prop:IntegrationOnASubinterval}, the function $F$, stated as above, is well defined. To check that $F$ is right-continuous, let $ c \in K $ be an arbitrary point, and assume that $ c $ is right-dense. Let $\delta$ be an $\frac{\varepsilon}{4}$-gauge for $ f $ that exposes the point $ c $. Since $ G $ is right-continuous, we may assume that $\delta(c) \cap [c, 1_K] = [c, v)$ is such that $ |G(x) - G(c)| < \frac{\varepsilon}{2(|f(c)| + 1)} $ for every $ x \in [c, v) $.

For any $ x \in [c, v) $, by applying Lemma~\ref{lem:SH} (Saks--Henstock) to the tagged system $\mathcal{S} = \{([c, x], c)\}$, and the functions $ f $ and $ G $, we obtain:
\[\left| \int_c^x f \, dG - f(c) G(c) \right| - \left| f(c)(G(x) - G(c)) \right| \leq \left| f(c)(G(x) - G(c)) + f(c) G(c) - \int_c^x f \, dG \right| < \frac{\varepsilon}{2}.\]

Hence, by applying Theorem \ref{Thm:Additivity},
\begin{align*}
|F(x) - F(c)| &= \left| \int_0^x f \, dG - \int_0^c f \, dG \right|= \left| \int_c^x f \, dG - f(c) G(c) \right|\\
&< \left| f(c)(G(x) - G(c)) \right| + \frac{\varepsilon}{2}< \frac{\varepsilon}{2} + \frac{\varepsilon}{2} = \varepsilon.
\end{align*}

For the second part, let $c \in K$ be arbitrary. If $c$ is left-isolated and different from $0_K$, then by Theorem \ref{Thm:Additivity}, we obtain  
\begin{align*}
F(c) - L_F(c) &= F(c) - F(c^-) = \int_{c^-}^{c} f \, dG - f(c^-) G(c^-) \\
&= f(c) \big( G(c) - G(c^-) \big) = f(c) \big( G(c) - L_G(c) \big).
\end{align*}
In the case $c = 0_K$, a straightforward adaptation of the previous argument yields the desired result.

If $c$ is a left-dense point, let $\varepsilon >0$ be arbitrary. We let $\delta$ be a $\frac{\epsilon}{2}$-gauge for $f$ on $K$ exposing the point $c$. By denoting $\operatorname{L}_G(c)=\ell$, we may also assume that $\delta(c)\cap [0_K,c)= (u,c)$ and $|\ell-G(x)|<\frac{\varepsilon}{2(|f(c)|+1)}$ for every $x \in (u,c)$. Given $x \in (u,c)$. By applying Lemma~\ref{lem:SH} (Saks--Henstock) for the tagged system $\mathcal{S}=\{([x,c],c)\}$ we obtain 
\[\left| f(c)(G(c) - G(x)) + f(x) G(x) - \int_x^c f \, dG \right| < \frac{\varepsilon}{2}.\]
From Theorem \ref{Thm:Additivity} we have
\small\begin{align*}
|F(x) - (F(c) + f(c)\int_K\chi_{\{c\}}\,dG)|&= \left| f(x)G(x) - \int_x^c f \, dG + f(c)(G(c) - G(x))+ f(c)(G(x) - \ell) \right| \\
& \leq \left|f(c)(G(c) - G(x)) +f(x)G(x) - \int_x^c f \, dG\right|+|f(c)(G(x) - \ell)|\\
&<\frac{\varepsilon}{2}+\frac{\varepsilon}{2} <\varepsilon.
\end{align*}\normalsize
We deduce that 
\[L_F(c)=\lim_{y \nearrow c}F(y)=F(c)+f(c)\int_K\chi_{\{c\}}\,dG=F(c)+f(c)(G(c)-L_G(c)).\] 
This completes the proof.
\end{proof}

\begin{theorem}\label{Thm:CaracterizacaodasFuncoesAbsolutamenteIntegraveis}
Let $K$ be a compact line, and let $G: K \to \mathbb{R}$ be a nondecreasing amenable function. Suppose $f$ is a $G$-integrable function, and consider the fuction $F: K \to \mathbb{R}$ defined by the formula 
\[F(x) = \int_{0_K}^x f \, dG.\] 
Then, $|f|$ is $G$-integrable if and only if $F\in \mathrm{NBV}(K)$. In this case, we have
\[\int_K |f| \, dG = |f(0_K)|G(0_K) + \operatorname{Var}(F).\]
\end{theorem}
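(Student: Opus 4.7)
The proof rests on a single key estimate derived from the Saks--Henstock machinery. Given any partition $P = \{([z_{j-1},z_j], t_j) : 1 \leq j \leq N\}$ of $K$, the additivity formula in Theorem~\ref{Thm:Additivity} gives
\[F(z_j) - F(z_{j-1}) = \int_{z_{j-1}}^{z_j} f \, dG - f(z_{j-1}) G(z_{j-1}).\]
Substituting this into Corollary~\ref{corol:SH} applied to $f$ yields, for any $\varepsilon > 0$, any $\varepsilon$-gauge $\delta$ for $f$, and every $\delta$-fine partition $P$ of $K$,
\[\sum_{j=1}^N \bigl|f(t_j)(G(z_j) - G(z_{j-1})) - (F(z_j) - F(z_{j-1}))\bigr| < 4\varepsilon.\]
Since $G$ is nondecreasing, $|f(t_j)(G(z_j) - G(z_{j-1}))| = |f(t_j)|(G(z_j) - G(z_{j-1}))$, so by the reverse triangle inequality and the definition of the Riemann sum for $|f|$,
\[\Bigl| S(|f|,G,P) - |f(0_K)|G(0_K) - \sum_{j=1}^N |F(z_j) - F(z_{j-1})| \Bigr| < 4\varepsilon.\]
I denote this estimate by $(\ast)$.

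For the sufficiency direction, assume $F \in \mathrm{NBV}(K)$ and let $V = \operatorname{Var}(F)$. Given $\varepsilon > 0$, I pick a division $D_0$ with $\sum_j |F(y_j) - F(y_{j-1})| > V - \varepsilon$, and choose $\delta$ to be an $\varepsilon$-gauge for $f$ that exposes every point of $D_0$ (property~\ref{it:Gauge2}). By Remark~\ref{Rem:EnlargePartitions}, every $\delta$-fine partition $P$ can be refined so that each element of $D_0$ appears as a division point without altering $S(|f|,G,P)$. The triangle inequality for refinements of $F$ gives $V - \varepsilon \leq \sum_j |F(z_j) - F(z_{j-1})| \leq V$, and combining with $(\ast)$ yields $|S(|f|,G,P) - (|f(0_K)|G(0_K) + V)| < 5\varepsilon$. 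Therefore $|f|$ is $G$-integrable with integral $|f(0_K)|G(0_K) + V$.

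For the necessity, suppose $|f|$ is $G$-integrable with integral $I$, and fix $\varepsilon > 0$. Let $\delta$ be a common $\varepsilon$-gauge for $f$ and $|f|$. Given any division $D$ of $K$, I refine $\delta$ to expose the points of $D$ and, via Cousin's Lemma (Lemma~\ref{lem:cousin}) applied on each subinterval between consecutive elements of $D$ together with the merging property~\ref{it:Gauge3}, produce a $\delta$-fine partition $P$ of $K$ whose division points include those of $D$. The triangle inequality yields $\sum_D |F(y_i) - F(y_{i-1})| \leq \sum_P |F(z_j) - F(z_{j-1})|$, and combining $(\ast)$ with $|S(|f|,G,P) - I| < \varepsilon$ gives $\sum_P |F(z_j) - F(z_{j-1})| < I - |f(0_K)|G(0_K) + 5\varepsilon$. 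Taking the supremum over $D$ and then letting $\varepsilon \to 0$ yields $\operatorname{Var}(F) \leq I - |f(0_K)|G(0_K) < \infty$, so $F \in \mathrm{NBV}(K)$; the asserted equality then follows by applying the sufficiency direction just proved.

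The main difficulty lies in producing the estimate $(\ast)$: one must recognize that the correction term $f(z_{j-1})G(z_{j-1})$ appearing in Saks--Henstock is precisely what converts $\int_{z_{j-1}}^{z_j} f\,dG$ into $F(z_j) - F(z_{j-1})$ via Theorem~\ref{Thm:Additivity}, so that the sum over the partition telescopes appropriately. Once $(\ast)$ is in place, both implications reduce to routine manipulations involving $\delta$-fine partitions and refinements of divisions, using only the exposition property of gauges and the merging construction for partitions.
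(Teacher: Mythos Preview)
Your proof is correct and follows essentially the same approach as the paper. The key estimate $(\ast)$ you isolate is precisely what the paper obtains (with the same constants) by combining Theorem~\ref{Thm:Additivity} with Corollary~\ref{corol:SH}, and your sufficiency argument matches the paper's almost verbatim. One small omission: to conclude $F\in\mathrm{NBV}(K)$ you must also note that $F$ is amenable, which follows from Theorem~\ref{Thm:Amenability}; the paper invokes this explicitly at the outset. For the necessity direction you take a slightly different route than the paper: rather than using the direct inequality $\bigl|\int_K f|_{I_i}\,dG\bigr|\le \int_K |f|\,|_{I_i}\,dG$ and summing, you reuse $(\ast)$ together with an $\varepsilon$-gauge for $|f|$ to bound the variation sums uniformly. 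Both arguments are valid; yours has the virtue of unifying the two directions through a single estimate, while the paper's is shorter and avoids constructing a special $\delta$-fine partition for each division $D$ (note also that the exposing step in your necessity argument is unnecessary, since merging Cousin partitions of the subintervals $[y_{i-1},y_i]$ already produces a $\delta$-fine partition containing $D$ among its division points).
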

\begin{proof}
Let us first assume that $|f|$ is $G$-integrable. According to Theorem~\ref{Thm:Amenability}, the primitive function $F$ is amenable. Therefore, it remains to show that $F$ has bounded variation. Let $\{x_0, x_1, \ldots, x_n\}$ be an arbitrary division of $K$. Define $I_0 = \{0_K\}$ and, for each $1 \leq i \leq n$, let $I_i = (x_{i-1}, x_i]$. From the results in Subsection~\ref{Sec-BasicProperties}, we obtain:
\begin{align*}
F(x_i) - F(x_{i-1}) 
&= \int_{x_{i-1}}^{x_i} f \, dG - f(x_{i-1}) G(x_{i-1}) \\
&= \left( \int_{x_{i-1}}^{x_i} f \, dG - f(x_{i-1}) L_G(x_{i-1}) \right) 
- f(x_{i-1}) \left( G(x_{i-1}) - L_G(x_{i-1}) \right) \\
&= \int_K f|_{I_i} \, dG.
\end{align*}
Therefore
\begin{align*}
|f(0_K)|G(0_K) &+\sum_{i=1}^n \left| F(x_i) - F(x_{i-1}) \right|= |f(0_K)|G(0_K)+\sum_{i=1}^n | \int_K f|_{I_i} \, dG |\\
&\leq \sum_{i=0}^n \int_{K} |f||_{I_i} \, dG = \int_K |f| \, dG
\end{align*}
and we deduce that $\operatorname{Var}(F) \leq \int_K |f| \, dG -|f(0_K)|G(0_K)< \infty$.

Conversely, assume that $\operatorname{Var}(F) < \infty$. Given $\varepsilon > 0$, let $\{x_0, x_1, \ldots, x_n\}$ be a division of $K$ such that $x_0<x_1<\ldots<x_n$ and
\[\operatorname{Var}(F) - \varepsilon 
< \sum_{i=1}^n \left| F(x_i) - F(x_{i-1}) \right| 
\leq \operatorname{Var}(F).\]

Let $\delta$ be an $\varepsilon$-gauge for $f$, and let $P$ be a $\delta$-fine partition that exposes the points $\{x_1, \ldots, x_n\}$. By refining the partition $P$ we may assume that all the points $\{x_0, x_1, \ldots, x_n\}$ are division points of the partition $P$ (see Remark~\ref{Rem:EnlargePartitions}).

Assuming $P = \{([y_{j-1},y_j]):1\leq j \leq m\}$, the following inequality holds:
\begin{align*}
\operatorname{Var}(F) - \varepsilon &< \sum_{i=1}^n \left| F(x_i) - F(x_{i-1}) \right|\leq\sum_{j=1}^m \left| F(y_j) - F(y_{j-1}) \right|\\
&= \sum_{j=1}^m \left| \int_{y_{j-1}}^{y_j} f \, dG - f(y_{j-1}) G(y_{j-1}) \right| 
\leq \operatorname{Var}(F).
\end{align*}

By applying Corollary \ref{corol:SH} for the partition $P$, using the reverse triangle inequality, and noting that $G$ is nondecreasing, we obtain:
\[\left|\sum_{j=1}^m |f(t_j)| (G(y_j) - G(y_{j-1})) - \sum_{j=1}^m | \int_{y_{j-1}}^{y_j} f \, dG - f(y_{j-1}) G(y_{j-1}) | \right|\leq 4\varepsilon.\]

Thus, we deduce:
\begin{align*}
|S(|f|, G, P) &- \big(|f(0_K)| G(0_K) + \operatorname{Var}(F)\big)| 
\leq \left| \sum_{j=1}^m |f(t_j)| (G(y_j) - G(y_{j-1})) \right. \\
&- \left. \sum_{j=1}^m | \int_{y_{j-1}}^{y_j} f \, dG - f(y_{j-1}) G(y_{j-1}) | \right|+ \left| \sum_{j=1}^m | F(y_j) - F(y_{j-1}) | - \operatorname{Var}(F) \right| \\
&< 4\varepsilon + \varepsilon = 5\varepsilon.
\end{align*}
and this completes the proof.
\end{proof}

\begin{corollary}\label{Cor:|f|IsIntegrable}
Let $K$ be a compact line, and let $G: K \to \mathbb{R}$ be a nondecreasing amenable function. Suppose $f, g: K \to \mathbb{R}$ are $G$-integrable functions, and assume that $|f(x)| \leq g(x)$ for every $x \in K$. Then $|f|$ is $G$-integrable.
\end{corollary}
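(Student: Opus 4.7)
The plan is to reduce the problem to Theorem~\ref{Thm:CaracterizacaodasFuncoesAbsolutamenteIntegraveis} applied in its reverse direction. Since $G$ is nondecreasing and amenable, and $F(x) := \int_{0_K}^x f\,dG$ is already amenable by Theorem~\ref{Thm:Amenability}, it suffices to show that $F$ has bounded variation in order to conclude $F \in \mathrm{NBV}(K)$ and hence that $|f|$ is $G$-integrable.

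The key preliminary step is the following observation: \emph{whenever $h \geq 0$ is $G$-integrable, its primitive $H(x) = \int_{0_K}^x h\,dG$ is nondecreasing.} For $0_K \leq x < y \leq 1_K$, combining Theorem~\ref{Thm:Additivity} with Proposition~\ref{prop:IntegrationOnASubinterval} (this is the identity used in the first display of the proof of Theorem~\ref{Thm:CaracterizacaodasFuncoesAbsolutamenteIntegraveis}) yields
\[
H(y) - H(x) \;=\; \int_x^y h\,dG - h(x)G(x) \;=\; \int_K h|_{(x,y]}\,dG.
\]
Because $0_K \notin (x,y]$, the boundary term $h|_{(x,y]}(0_K)G(0_K)$ vanishes, so every Riemann sum reduces to $\sum_i h|_{(x,y]}(t_i)\bigl(G(x_i) - G(x_{i-1})\bigr)$, which is a sum of nonnegative terms since $h \geq 0$ and $G$ is nondecreasing. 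Passing to the limit gives $H(y) \geq H(x)$.

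To finish, I decompose $f = \tfrac{1}{2}\bigl((g+f) - (g-f)\bigr)$. The hypothesis $|f| \leq g$ implies $g+f \geq 0$ and $g-f \geq 0$ pointwise, and by the bilinearity of the integral (Proposition~\ref{Prop:BilinearityOfHKIntegral}) both are $G$-integrable. Applying the linearity of the primitive operation on every initial segment $[0_K,x]$, one has $F = \tfrac{1}{2}(A - B)$ where $A(x) = \int_{0_K}^x (g+f)\,dG$ and $B(x) = \int_{0_K}^x (g-f)\,dG$; by the preceding step both $A$ and $B$ are nondecreasing, so $F$ is a difference of two nondecreasing functions and therefore has bounded variation. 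Together with amenability this gives $F \in \mathrm{NBV}(K)$, and Theorem~\ref{Thm:CaracterizacaodasFuncoesAbsolutamenteIntegraveis} then delivers that $|f|$ is $G$-integrable.

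I do not foresee a substantive obstacle here: the whole argument rests on already-established machinery, and the only point requiring minor care is the bookkeeping that identifies $\int_x^y h\,dG - h(x)G(x)$ with $\int_K h|_{(x,y]}\,dG$, so that one can meaningfully exploit nonnegativity of the Riemann summands without a spurious contribution at $0_K$.
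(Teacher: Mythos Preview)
Your proof is correct and, like the paper, reduces to showing that $F(x)=\int_{0_K}^x f\,dG$ has bounded variation and then invoking Theorem~\ref{Thm:CaracterizacaodasFuncoesAbsolutamenteIntegraveis}. The route differs, though. The paper argues by direct majorization: using the same identity $F(x_i)-F(x_{i-1})=\int_K f|_{(x_{i-1},x_i]}\,dG$ that you isolate, it bounds each term by $\int_K g|_{(x_{i-1},x_i]}\,dG$ (via nonnegativity of Riemann sums for $g\pm f$, implicitly) and then sums to obtain $\operatorname{Var}(F)\le \int_K g\,dG - g(0_K)G(0_K)$. Your argument instead performs a Jordan-style decomposition $f=\tfrac12\bigl((g+f)-(g-f)\bigr)$ into nonnegative $G$-integrable pieces, observes that their primitives are nondecreasing, and concludes that $F$ is a difference of monotone functions. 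The paper's approach yields an explicit bound on $\operatorname{Var}(F)$ in terms of $g$; yours is a touch more conceptual and makes transparent why the dominating function forces bounded variation. Both rest on the same bookkeeping identity and the same nonnegativity of Riemann sums when $G$ is nondecreasing and the $0_K$ term drops out, so the care you flag at the end is exactly the point where the two arguments coincide.
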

\begin{proof}
Define $F: K \to \mathbb{R}$ by $F(x) = \int_{0_K}^x f \, dG$, and let $\{x_0, x_1, \ldots, x_n\}$ be a division of $K$. Set $I_0 = \{0_K\}$ and, for each $1 \leq i \leq n$, define $I_i = (x_{i-1}, x_i]$. Repeating the argument from the proof of Theorem~\ref{Thm:CaracterizacaodasFuncoesAbsolutamenteIntegraveis}, we may write:
\begin{align*}
\sum_{i=1}^n|F(x_i)-F(x_{i-1})|&=\sum_{i=1}^n \left| \int_K f|_{I_i} \, dG \right|\leq \sum_{i=1}^n \int_K |f||_{I_i} \, dG\\
&\leq \sum_{i=1}^n \int_K g|_{I_i} \, dG=\sum_{i=0}^n \int_K g|_{I_i} \, dG - g(0_K)G(0_K)\\
&=\int_K g\, dG- g(0_K)G(0_K)<\infty
\end{align*}
Thus, $\operatorname{Var}(F) < \infty$, which, by Theorem \ref{Thm:CaracterizacaodasFuncoesAbsolutamenteIntegraveis}, implies that $|f|$ is $G$-integrable.
\end{proof}

\subsection{$G$-differentiability on compact lines}
In this subsection, we introduce a notion of derivative that underpins a version of the Fundamental Theorem of Calculus. Our definition is inspired by \cite[Definition 1.1]{pouso2015new}; however, we assume the integrator $G$ to be nondecreasing and amenable, rather than non-decreasing and left-continuous at every point. For related concepts involving $\Delta$- and $\nabla$-derivatives on time scales, see \cite[Chapter 8]{bohner2001dynamic} and \cite[Chapter 8]{monteiro2019kurzweil}.

\begin{defin}\label{def:diff}  
Let $K$ be a compact line, and let $f, G: K \to \mathbb{R}$ be functions, with $G$ amenable and nondecreasing. Fix a point $x \in K$.

If $L_G(x) = G(x)$, we say that $f$ is \emph{$G$-differentiable at $x$} if (in case $x$ is left-isolated in $K$, we assume $f(x) = L_f(x)$ and) the following conditions are satisfied:
\begin{enumerate}[label=\textnormal{(D\alph*)}]
    \item \label{it:diff_defin1} For every neighborhood $V$ of $x$, the function $G$ is not constant on $V$.
    \item \label{it:diff_defin2} There exists $D \in \mathbb{R}$ such that, for every $\varepsilon > 0$, there exists a neighborhood $V$ of $x$ in $K$ such that, for all $y \in V$,
\[\left|f(y) - f(x) - D \big(G(y) - G(x)\big) \right| \leq \varepsilon \big|G(y) - G(x)\big|.\]
\end{enumerate}

If $L_G(x) \neq G(x)$, we say that $f$ is \emph{$G$-differentiable at $x$} if the following conditions are satisfied:
\begin{enumerate}[label=\textnormal{(E\alph*)}]
    \item \label{it:RElDiffAux1} $f$ is regulated at $x$.
    \item \label{it:RElDiffAux2} There exists $D \in \mathbb{R}$ such that, for every $\varepsilon > 0$, there exists a neighborhood $V$ of $x$ in $K$ such that, for all $y \in V\cap [x,1_K]$,
    \[
    \left|f(y) - L_f(x) - D \big(G(y) - L_G(x)\big) \right| \leq \varepsilon \big|G(y) - L_G(x)\big|.
    \]
\end{enumerate}
In both cases, we say that $D$ is the \emph{$G$-derivative} of $f$ at $x$, and we write $\frac{df}{dG}(x) = D$.    
\end{defin}

It is easy to verify that the $G$-derivative of a function, if it exists, is unique. Furthermore, we present the following characterization of $G$-differentiability in the case $L_G(x) \neq G(x)$.

\begin{prop}\label{Pro:AuXDiff}
Let $K$ be a compact line, and let $f, G: K \to \mathbb{R}$ be functions with $G$ nondecreasing and amenable. For a point $x \in K$ such that $L_G(x) \neq G(x)$, assume that $f$ is regulated at $x$. Then, the function $f$ is $G$-differentiable at $x$ if and only if $f$ is right-continuous at $x$. In this case,
\[\frac{df}{dG}(x)=\frac{f(x)-L_f(x)}{G(x)-L_G(x)}.\]
\end{prop}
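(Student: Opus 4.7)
The plan is to begin by observing that condition \ref{it:RElDiffAux2} forces a unique value of $D$. Indeed, setting $y = x$ in the defining inequality gives $|f(x) - L_f(x) - D(G(x)-L_G(x))| \le \varepsilon\,|G(x)-L_G(x)|$ for every $\varepsilon>0$, and since by hypothesis $G(x)-L_G(x) > 0$ (recall $G$ is nondecreasing and $L_G(x)\ne G(x)$), this forces
\[
D \;=\; \frac{f(x)-L_f(x)}{G(x)-L_G(x)}.
\]
Using this identity to substitute $L_f(x) = f(x) - D(G(x)-L_G(x))$ and $L_G(x) = G(x) - (G(x)-L_G(x))$, the expression appearing on the left-hand side of \ref{it:RElDiffAux2} simplifies to
\[
f(y) - L_f(x) - D(G(y)-L_G(x)) \;=\; f(y) - f(x) - D(G(y)-G(x)),
\]
which is the cleaner quantity to work with. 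Also, for $y \in [x,1_K]$ the monotonicity of $G$ gives $|G(y)-L_G(x)| = G(y)-L_G(x) \ge G(x)-L_G(x) =: \alpha > 0$.

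For the forward direction, assume $f$ is $G$-differentiable at $x$, with $D$ as above. We need right-continuity of $f$ at $x$, which is nontrivial only when $x$ is right-dense and $x \ne 1_K$; in all other cases the condition is vacuous. Fix $\varepsilon' > 0$ and apply \ref{it:RElDiffAux2} with some $\varepsilon > 0$ to be chosen, obtaining a neighborhood $V$ of $x$ in which
\[
|f(y) - f(x) - D(G(y)-G(x))| \;\le\; \varepsilon\,|G(y)-L_G(x)|
\quad \text{for all } y \in V\cap[x,1_K].
\]
By the right-continuity of $G$ (from amenability) we may shrink $V$ so that $|G(y)-G(x)| < \varepsilon$ on $V\cap[x,1_K]$; consequently $|G(y)-L_G(x)| \le \alpha + \varepsilon$. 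Then
\[
|f(y)-f(x)| \;\le\; \varepsilon(\alpha+\varepsilon) + |D|\varepsilon.
\]
Choosing $\varepsilon$ small enough that the right side is below $\varepsilon'$ yields right-continuity of $f$ at $x$.

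For the backward direction, assume $f$ is right-continuous at $x$, set $D = (f(x)-L_f(x))/(G(x)-L_G(x))$, and verify \ref{it:RElDiffAux2}. Condition \ref{it:RElDiffAux1} is the standing hypothesis. Given $\varepsilon > 0$, by the right-continuity of $f$ and of $G$ at $x$, choose a neighborhood $V$ of $x$ such that for all $y \in V \cap [x,1_K]$,
\[
|f(y)-f(x)| \;\le\; \tfrac{\varepsilon\alpha}{2}
\qquad\text{and}\qquad
|D|\,|G(y)-G(x)| \;\le\; \tfrac{\varepsilon\alpha}{2}.
\]
Combining these, $|f(y)-f(x)-D(G(y)-G(x))| \le \varepsilon\alpha$, and since $|G(y)-L_G(x)| \ge \alpha$ for $y \ge x$, the right-hand side is at most $\varepsilon\,|G(y)-L_G(x)|$. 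Using the simplification noted in the first paragraph, this is exactly the inequality in \ref{it:RElDiffAux2}, so $f$ is $G$-differentiable at $x$ with the stated derivative.

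The main issue to watch is purely bookkeeping: the simplification in the first paragraph depends on $G(x) - L_G(x) > 0$ being strictly positive, which is what makes $D$ determined and also what supplies the lower bound $|G(y)-L_G(x)| \ge \alpha$ used crucially in the backward direction. Edge cases (the point $x$ being right-isolated or $x = 1_K$) require no extra argument, since the restriction $y \in V\cap[x,1_K]$ then reduces essentially to $y = x$, for which both the inequality in \ref{it:RElDiffAux2} and right-continuity of $f$ hold trivially.
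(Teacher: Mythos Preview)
Your proof is correct and follows essentially the same route as the paper. The paper packages the argument via the auxiliary function $H(y) = \dfrac{f(y)-L_f(x)}{G(y)-L_G(x)} - \dfrac{f(x)-L_f(x)}{G(x)-L_G(x)}$ and shows that $H$ being right-continuous at $x$ with $H(x)=0$ is equivalent to both \ref{it:RElDiffAux2} and the right-continuity of $f$; you instead perform the algebraic simplification $f(y)-L_f(x)-D(G(y)-L_G(x)) = f(y)-f(x)-D(G(y)-G(x))$ up front and work directly with that expression, which is arguably cleaner but amounts to the same computation.
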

\begin{proof}  
Since $G$ is nondecreasing and $L_G(x) \neq G(x)$, we can define a function $H: K \to \mathbb{R}$ for each $y \in K$ by the formula
\[
H(y) = 
\begin{cases}
\displaystyle\frac{f(y) - L_f(x)}{G(y) - L_G(x)} - \frac{f(x) - L_f(x)}{G(x) - L_G(x)} & \text{if } y \geq x, \\
0 & \text{otherwise}.
\end{cases}
\]

If $f$ is $G$-differentiable at $x$, then it is straightforward to check that $\frac{df}{dG}(x)=\frac{f(x) - L_f(x)}{G(x) - L_G(x)}$,
and consequently, the condition \ref{it:RElDiffAux2} implies that the function $H$ is right-continuous at $x$, with $H(x)=0$. Setting $P(y) = f(y) - L_f(x)$ and $Q(y) = G(y) - L_G(x)$, we can write, for every $y \geq x$,
\begin{align*}  
    |H(y)| &= \left| \frac{P(y)}{Q(y)} - \frac{P(x)}{Q(x)} \right| \geq \left| \frac{P(x) - P(y)}{Q(y)} \right| - |P(x)| \left| \frac{1}{Q(y)} - \frac{1}{Q(x)} \right|,
\end{align*}
which leads to
\begin{align*}  
    |f(x) - f(y)| &= |P(x) - P(y)| \leq |Q(y)||H(y)| + |D|\, |Q(x) - Q(y)| \\
    &= |G(y) - L_G(x)||H(y)| + |D|\, |G(x) - G(y)|.
\end{align*}
Since $G$ is amenable, it follows from the previous relation that $f$ is right-continuous at $x$.

Conversely, assume that $f$ is right-continuous at $x$. Since $G$ is also right-continuous at $x$ by hypothesis, it follows that the function $H$ is right-continuous at $x$, with $H(x)=0$. This implies that condition \ref{it:RElDiffAux2} holds with
\[D = \frac{f(x) - L_f(x)}{G(x) - L_G(x)}.\]
Therefore, $f$ is $G$-differentiable at $x$, with $\frac{df}{dG}(x) = D$. This concludes the proof.
\end{proof}

\begin{lemma}[Straddle]\label{lem:straddle}
Let $K$ be a compact line, and let $G: K \to \mathbb{R}$ be a non-decreasing, positive, and amenable function. Suppose $f:K \to \mathbb{R}$ is $G$-differentiable at a point $t \in K$. Then, in each of the following cases, for every $\varepsilon > 0$, there exists an open interval $I$ containing $t$ such that, for any $(x,y] \subset I$ with $x \leq t \leq y$, the following inequality holds:
\begin{enumerate}[label=\textnormal{(S\alph*)}]
    \item \label{it:Straddle1} If $G(t) = L_G(t)$, then
    \[\left| f(y) - f(x) - \frac{df}{dG}(t) \big(G(y) - G(x)\big) \right| \leq \varepsilon \big(G(y) - G(x)\big).\]
    \item \label{it:Straddle2} If $G(t) \neq L_G(t)$ and $t$ is left-isolated, then
    \[\left| f(y) - f(x) - \frac{df}{dG}(t) \big(G(y) - G(x)\big) \right| \leq \varepsilon \big(G(y) - L_G(x)\big).\]
\end{enumerate}
\end{lemma}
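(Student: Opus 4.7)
I split the argument along the two regimes of Definition~\ref{def:diff} that may apply at $t$; in both, the central difficulty is the choice of $I$, since $(x,y]\subset I$ does \emph{not} force $x\in I$, and so $I$ has to be engineered strictly inside the differentiability neighborhood $V$, or so that the only possible ``outside'' value of $x$ is the controllable point $t^-$.

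For part \ref{it:Straddle1}, let $D=\frac{df}{dG}(t)$. Condition (Db) (applied with $\varepsilon/2$) yields a neighborhood $V$ of $t$ such that $|f(z)-f(t)-D(G(z)-G(t))|\leq\frac{\varepsilon}{2}|G(z)-G(t)|$ for all $z\in V$. I then choose the open interval $I\ni t$ inside $V$ as follows: if $t$ is left-dense, take $I=(a',b')$ with $a',b'\in V$ and $a'<t<b'$, so that $(x,y]\subset I$ forces $x\geq a'$ and hence $x\in V$; if $t$ is left-isolated, take $I=(t^-,b')$ with $b'\in V$ and $b'>t$, which forces $x\in\{t^-,t\}$. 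In the left-isolated subcase $x=t^-$, the parenthetical convention $f(t)=L_f(t)$ from Definition~\ref{def:diff} together with the case hypothesis $G(t)=L_G(t)$ give $f(x)=f(t)$ and $G(x)=G(t)$, reducing matters to $x=t$. With $x,y$ both effectively in $V$, applying the decomposition
\[f(y)-f(x)-D(G(y)-G(x)) = \bigl(f(y)-f(t)-D(G(y)-G(t))\bigr)-\bigl(f(x)-f(t)-D(G(x)-G(t))\bigr)\]
and using monotonicity of $G$ (so that $|G(y)-G(t)|+|G(t)-G(x)|=G(y)-G(x)$) delivers the desired bound $\varepsilon(G(y)-G(x))$.

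For part \ref{it:Straddle2}, condition (Eb) provides a neighborhood $V$ of $t$ such that $|f(z)-L_f(t)-D(G(z)-L_G(t))|\leq \varepsilon(G(z)-L_G(t))$ for all $z\in V\cap[t,1_K]$, while Proposition~\ref{Pro:AuXDiff} identifies $D=(f(t)-L_f(t))/(G(t)-L_G(t))$, so that $f(t)-L_f(t)=D(G(t)-L_G(t))$. Since $t$ is left-isolated I take $I=(t^-,b')=[t,b')$ with $b'\in V$ and $b'>t$; this is an open interval containing $t$ sitting inside $V\cap[t,1_K]$, and $(x,y]\subset I$ with $x\leq t$ forces $x\in\{t^-,t\}$. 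If $x=t^-$, then $f(x)=L_f(t)$ and $G(x)=L_G(t)$, whence $f(y)-f(x)-D(G(y)-G(x))=f(y)-L_f(t)-D(G(y)-L_G(t))$; if $x=t$, the identity for $D$ reduces the same quantity to the same form. In either case the bound $\varepsilon(G(y)-L_G(t))$ applies, and since $L_G(x)\leq L_G(t)$ in both subcases (trivially for $x=t$, and via $L_G(t^-)\leq G(t^-)=L_G(t)$ for $x=t^-$), this is at most $\varepsilon(G(y)-L_G(x))$.

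\textbf{Main obstacle.} As flagged above, the principal difficulty is not the computation itself but the selection of $I$: I have to arrange that the left endpoint $x$ of any $(x,y]\subset I$ lies either inside the differentiability neighborhood $V$ or in the singleton $\{t^-\}$ governed by the conventions of Definition~\ref{def:diff}. Once this topological engineering is done, each part collapses to essentially the classical straddle estimate, with Proposition~\ref{Pro:AuXDiff} handling the ``jump'' contribution in part \ref{it:Straddle2}.
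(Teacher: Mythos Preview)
Your proof is correct and follows essentially the same route as the paper's: the same engineering of $I$ (strictly inside $V$ in the left-dense case, $[t,b')=(t^-,b')$ in the left-isolated case) and the same add-and-subtract decomposition around $f(t),G(t)$ or $L_f(t),L_G(t)$. The only noteworthy variation is that in part~\ref{it:Straddle2} with $x=t$ you invoke Proposition~\ref{Pro:AuXDiff} to make the subtracted term vanish identically, whereas the paper applies condition~\ref{it:RElDiffAux2} with $\varepsilon/2$ at both $y$ and $t$; either device works.
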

\begin{proof}
Assume that $G(t) = L_G(t)$ and let $\varepsilon > 0$ be arbitrary. Denoting $D = \frac{df}{dG}(t)$, there exists a neighborhood $V$ of $t$ such that condition~\ref{it:diff_defin2} holds for every $y \in V$. 

If $t$ is left-dense, we let $I = (a,b)$ (or $I = (a,t]$ in case $t = 1_K$) be such that $a < t < b$ and $[a,b) \subset V$. Then, for any $(x,y] \subset I$ with $x \leq t \leq y$, we have $x,y \in V$. By adding and subtracting $f(t) + D G(t)$ and performing standard computations, we obtain
\begin{align*}
\left| f(y) - f(x) - D\big(G(y) - G(x)\big) \right|
&\leq \varepsilon \big(G(y) - G(t)\big) + \varepsilon \big(G(t) - G(x)\big) \\
&= \varepsilon \big(G(y) - G(x)\big).
\end{align*}

If $t$ is left-isolated, then $t<1_K$ by condition \ref{it:diff_defin1}. We fix $I=[t,b) \subset V$ with $t<b$. Since in this case we have $G(t) = L_G(t)$ and $f(t) = L_f(t)$, for each $(x,y] \subset I$ with $x \leq t \leq y$, we have
\[\left| f(y) - f(x) - D \big(G(y) - G(x)\big) \right| \leq \varepsilon \big(G(y) - G(x)\big).\]

Now assume that $G(t) \neq L_G(t)$ and that $t$ is left-isolated. Again, letting $D = \frac{df}{dG}(t)$, we can choose an open interval $I \subset [t,1_K]$ containing $t$ such that condition~\ref{it:RElDiffAux2} holds for every $y \in I$, with error bound $\frac{\varepsilon}{2}$. For any $(x,y] \subset I$ with $x \leq t \leq y$, we consider two cases:

If $x < t$, then $x = t^-$, so
\begin{align*}
\left| f(y) - f(x) - D \big(G(y) - G(x)\big) \right|
&= \left| f(y) - L_f(t) - D\big(G(y) - L_G(t)\big) \right| \\
&\leq \varepsilon \big(G(y) - L_G(t)\big) \leq \varepsilon \big(G(y) - L_G(x)\big).
\end{align*}

If $x = t$, we add and subtract $L_f(t) + D L_G(t)$ to obtain
\begin{align*}
\left| f(y) - f(x) - D \big(G(y) - G(x)\big) \right|
&\leq \frac{\varepsilon}{2}\big(G(y) - L_G(t)\big) + \frac{\varepsilon}{2} \big(G(x) - L_G(t)\big) \\
&\leq \varepsilon \big(G(y) - L_G(t)\big) \leq \varepsilon \big(G(y) - L_G(x)\big).
\end{align*}

This concludes the proof.
\end{proof}

\subsection{The Fundamental Theorem of Calculus}

Motivated by \cite[Theorem 2.4]{peterson2006henstock}, we now establish the following formulation of the Fundamental Theorem of Calculus.

\begin{theorem}[Fundamental Theorem of Calculus: integrating derivatives]\label{Thm:integration}
Let $K$ be a compact line, and let $G: K \to \mathbb{R}$ be a non-decreasing, positive, and amenable function. Suppose that a continuous function $F: K \to \mathbb{R}$ is $G$-differentiable at every point of $K$, except possibly at $0_K$ and at a countable set of left-dense points. Let $f: K \to \mathbb{R}$ be any function such that $f(x) = \frac{dF}{dG}(x)$ at every point where $F$ is $G$-differentiable. Then, $f$ is $G$-integrable, and the following equality holds:
\[\int_K f \, dG = F(1_K) - \left(F(0_K) - f(0_K) G(0_K)\right).\]
\end{theorem}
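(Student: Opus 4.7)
The plan is to construct, for each $\varepsilon > 0$, a gauge $\delta$ on $K$ such that every $\delta$-fine partition $P = \{([x_{i-1}, x_i], t_i) : 1 \leq i \leq n\}$ of $K$ satisfies the required estimate. Using the telescoping identity $F(1_K) - F(0_K) = \sum_i (F(x_i) - F(x_{i-1}))$, the problem reduces to controlling $\sum_i [f(t_i)(G(x_i) - G(x_{i-1})) - (F(x_i) - F(x_{i-1}))]$. I enumerate a countable set $C = \{c_0, c_1, c_2, \ldots\}$ containing $0_K$, the given countable set of possibly non-$G$-differentiable left-dense points, and every left-dense point $c$ with $L_G(c) \neq G(c)$; the last class is countable because the amenable nondecreasing function $G$ has at most countably many discontinuities. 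This enlargement guarantees that at every $t \notin C$ exactly one of the two Straddle Lemma cases applies: either $L_G(t) = G(t)$ (case \ref{it:Straddle1}) or $t$ is left-isolated with $L_G(t) \neq G(t)$ (case \ref{it:Straddle2}).

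For $t \notin C$, I invoke Lemma~\ref{lem:straddle} with tolerance $\varepsilon' = \varepsilon/(4G(1_K) + 1)$ to define $\delta(t)$. For $c_n \in C$, I exploit the continuity of $F$ together with the right-continuity and regulatedness of $G$ to choose $\delta(c_n)$ small enough that the individual contribution of any interval of $P$ tagged by $c_n$ is at most $\varepsilon/2^{n+2}$; concretely, this means forcing $|F(y) - F(c_n)|$ and $|G(y) - G(c_n)|$ (and the analogous quantities involving $L_F$ and $L_G$ on the left) to be bounded by a quantity of order $\varepsilon \cdot 2^{-n}/(|f(c_n)| + 1)$ throughout $\delta(c_n)$. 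When $L_G(c_n) \neq G(c_n)$, Proposition~\ref{Pro:AuXDiff} combined with the continuity of $F$ forces $F$ to be $G$-differentiable at $c_n$ with $\frac{dF}{dG}(c_n) = 0$, so $f(c_n) = 0$ and the estimate becomes trivial.

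Cousin's Lemma then produces a $\delta$-fine partition, and I split the error sum according to whether $t_i \in C$. For $t_i \notin C$, the Straddle Lemma bounds each term by $\varepsilon'(G(x_i) - L_G(x_{i-1}))$; the summation is controlled by observing that $\sum_i(G(x_i) - G(x_{i-1})) = G(1_K) - G(0_K)$ telescopes and that $\sum_i(G(x_{i-1}) - L_G(x_{i-1}))$ is bounded by the total left-jump of $G$, which for a nondecreasing function does not exceed $G(1_K) - G(0_K)$, yielding a total bound of $2\varepsilon' G(1_K) \leq \varepsilon/2$. For $t_i \in C$, each $c_n$ appearing as a tag contributes at most $\varepsilon/2^{n+2}$ per tagged interval, with at most two adjacent intervals sharing the same tag, producing a total of at most $\sum_n 2 \cdot \varepsilon/2^{n+2} \leq \varepsilon/2$. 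Adding both parts yields the desired estimate.

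The main obstacle I expect is the careful handling of the exceptional set, particularly the insight that the left-dense points where $G$ has a left-jump must be adjoined to $C$ (despite $F$ being actually $G$-differentiable there), because the Straddle Lemma as stated does not cover that configuration; resolving this via Proposition~\ref{Pro:AuXDiff} is what permits a unified treatment. A second subtlety is that there is no need to \emph{expose} the points of $C$, a countable set that may accumulate: even when some $c_n \in (x_{i-1}, x_i]$ but $t_i \notin C$, the Straddle bound $\varepsilon'(G(x_i) - L_G(x_{i-1}))$ already absorbs any jumps of $G$ occurring inside the interval, and summed along the partition it remains controlled.
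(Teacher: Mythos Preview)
Your proposal is correct and follows essentially the same approach as the paper: enumerate an exceptional countable set combining the non-differentiability points with the left-dense points where $G$ jumps (using Proposition~\ref{Pro:AuXDiff} to see that $f$ vanishes at the latter), apply the Straddle Lemma at tags outside this set, and use the continuity of $F$ to control the contribution of exceptional tags via a geometric series. Your choice to use the weaker Straddle bound $\varepsilon'(G(x_i)-L_G(x_{i-1}))$ uniformly (rather than splitting into the (S1)/(S2) cases as the paper does) is a harmless simplification; the only minor imprecision is that $\sum_i(G(x_{i-1})-L_G(x_{i-1}))$ can be as large as $G(1_K)$ rather than $G(1_K)-G(0_K)$ because of the convention $L_G(0_K)=0$, but your final estimate $2\varepsilon' G(1_K)<\varepsilon/2$ with $\varepsilon'=\varepsilon/(4G(1_K)+1)$ already absorbs this.
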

\begin{proof}
Let $A$ be the set of points where $F$ is not $G$-differentiable. Since $F$ is continuous and each point of $A$ is either $0_K$ or left-dense, Proposition~\ref{Pro:AuXDiff} implies that $G$ is continuous at every $t \in A$. Let $B$ be the set of left-dense points $t \in K$ such that $L_G(t) \neq G(t)$. As $G$ is non-decreasing, $B$ is countable. Moreover, the continuity of $F$ ensures that $F$ is $G$-differentiable at each $t \in B$, and hence $f(t) = \frac{dF}{dG}(t) = 0$ by Proposition~\ref{Pro:AuXDiff}. Define $E = A \cup B = \{c_1, c_2, \dots\}$.

Let $\varepsilon > 0$ be arbitrary. We define a gauge $\delta$ on $J$ as follows: for each $t \in K \setminus E$, let $\delta(t)$ be an open interval obtained by applying the Straddle Lemma~\ref{lem:straddle} to $t$ and $\varepsilon$. If $t = c_j \in E$, then since $F$ is continuous, there exists an open neighborhood $I(c_j)$ of $c_j$ such that
\[
x \in I(c_j) \Rightarrow |F(x) - F(c_j)| < \frac{\varepsilon}{2^{j+3}}.
\]

Since each element of $E \setminus \{0_K\}$ is left-dense, we choose $\delta(c_j)$ to be an open interval of the form $(p, q)$ (or $(p, c_j]$ if $c_j = 1_K$) such that $p < c_j < q$ and $[p, q) \subset I(c_j)$. If $c_j = 0_K$, we take $\delta(c_j)$ to be a non-empty open interval of the form $[0_K, q) \subset I(c_j)$. Moreover, for those $j \in \mathbb{N}$ such that $G$ is continuous at $c_j$, we may also assume that
\[|G(x)-G(c_j)| < \frac{\varepsilon}{2^{j+3}(|f(c_j)|+1)} \quad \text{for every } x \in \delta(c_j).\]

Now let $P = \{([x_{i-1}, x_i], t_i) : 1 \leq i \leq n\}$ be a $\delta$-fine partition of $K$, and we may assume that $x_0 < x_1 < \dots < x_n$ (see Remark~\ref{Rem:EnlargePartitions}). 

If $t_i = c_j \in E$, then $(x_{i-1}, x_i] \subset \delta(t_i)$. By the construction of the gauge, we have $[x_{i-1}, x_i] \subset I(c_j)$, and consequently, in the case $c_j \in A$, we obtain:
\begin{align*}
    |F(x_i) - F(x_{i-1}) - &f(c_j) \big(G(x_i) - G(x_{i-1})\big)| \\
    &\leq |F(x_i) - F(c_j)| + |F(c_j) - F(x_{i-1})| \\
    &+ |f(c_j)|\big(|G(x_i) - G(c_j)| + |G(c_j) - G(x_{i-1})|\big) \\
    &\leq\frac{\varepsilon}{2^{j+3}} + \frac{\varepsilon}{2^{j+3}} + \frac{|f(c_j)|}{(|f(c_j)|+1)} \left( \frac{\varepsilon}{2^{j+3}} +\frac{\varepsilon}{2^{j+3}} \right) \leq \frac{\varepsilon}{2^{j+1}}.
\end{align*}

In the case $c_j \in B$, we obtain:
\begin{align*}
    |F(x_i) - F(x_{i-1}) - &f(c_j) \big(G(x_i) - G(x_{i-1})\big)|\\
    &\leq|F(x_i) - F(c_j)| + |F(c_j) - F(x_{i-1})| + |f(c_j)\big(G(x_i) - G(x_{i-1}\big)| \\
    &\leq\frac{\varepsilon}{2^{j+3}} + \frac{\varepsilon}{2^{j+3}} + 0 \leq \frac{\varepsilon}{2^{j+1}}.
\end{align*}

Since each point in $E$ can appear as a tag in at most two components of the partition $P$, it follows that
\[
\sum_{t_i \in E} |F(x_i) - F(x_{i-1}) - f(t_i)(G(x_i) - G(x_{i-1}))| < 2\sum_i \frac{\varepsilon}{2^{i+1}} = \varepsilon.
\]

Now let $t_{i_1} \leq t_{i_2} \leq \dots \leq t_{i_m}$ be all tag points in $P$ that are left-isolated and satisfy $G(t_{i_k}) \neq L_G(t_{i_k})$. For each $1 \leq k \leq m$, we have $(x_{i_k - 1}, x_{i_k}] \subset \delta(t_{i_k})$, with $x_{i_k - 1} \leq t_{i_k} \leq x_{i_k}$. Then, by the Straddle Lemma~\ref{it:Straddle2}, we obtain:
\[
\left|F(x_{i_k}) - F(x_{i_k - 1}) - f(t_{i_k}) (G(x_{i_k}) - G(x_{i_k - 1}))\right| \leq \varepsilon (G(x_{i_k}) - L_G(x_{i_k - 1})).
\]

Summing over $k$ gives:
\begin{align*}
\sum_{k=1}^m \big|F(x_{i_k}) &- F(x_{i_k - 1}) - f(t_{i_k})(G(x_{i_k}) - G(x_{i_k - 1}))\big| \\
&\leq \varepsilon \left( \sum_{k=1}^m (G(x_{i_k}) - G(x_{i_k - 1})) + \sum_{k=1}^m (G(x_{i_k - 1}) - L_G(x_{i_k - 1})) \right).
\end{align*}
Since $L_G(x_{i_k - 1}) \geq G(x_{i_{k-1} - 1})$ for each $k$, and noting that $x_{i_1 - 1} = 0_K$ implies $L_G(x_{i_1 - 1}) = 0$, we obtain:
\[
\sum_{k=1}^m \big|F(x_{i_k}) - F(x_{i_k - 1}) - f(t_{i_k})(G(x_{i_k}) - G(x_{i_k - 1}))\big|
\leq \varepsilon (2G(1_K) - G(0_K)).
\]

Next, let $t_{i_1} \leq t_{i_2} \leq \dots \leq t_{i_m}$ be all left-dense tag points of $P$ outside $E$. Then $G(t_{i_k}) = L_G(t_{i_k})$ for each $k$. Since $(x_{i_k - 1}, x_{i_k}] \subset \delta(t_{i_k})$ and $x_{i_k - 1} \leq t_{i_k} \leq x_{i_k}$, the Straddle Lemma~\ref{it:Straddle1} yields:
\[
|F(x_{i_k}) - F(x_{i_k - 1}) - f(t_{i_k})(G(x_{i_k}) - G(x_{i_k - 1}))| \leq \varepsilon (G(x_{i_k}) - G(x_{i_k - 1})).
\]
Thus,
\begin{align*}
\sum_{k=1}^m \big|F(x_{i_k}) &- F(x_{i_k - 1}) - f(t_{i_k})(G(x_{i_k}) - G(x_{i_k - 1}))\big|\leq \varepsilon (G(1_K) - G(0_K)).
\end{align*}

Combining all the estimates, we conclude:
\[
\sum_{i=1}^n \big|F(x_i) - F(x_{i-1}) - f(t_i)(G(x_i) - G(x_{i-1}))\big| \leq \varepsilon (1 + 3G(1_K) - 2G(0_K)).
\]

Therefore:
\begin{align*}
\big|F(1_K) - (F(0_K) - &f(0_K)G(0_K))- S(f, G, P)\big| \\
&= \left|F(1_K) - F(0_K) - \sum_{i=1}^n f(t_i)(G(x_i) - G(x_{i-1}))\right| \\
&= \left|\sum_{i=1}^n (F(x_i) - F(x_{i-1})) - \sum_{i=1}^n f(t_i)(G(x_i) - G(x_{i-1}))\right| \\
&\leq \sum_{i=1}^n \big|F(x_i) - F(x_{i-1}) - f(t_i)(G(x_i) - G(x_{i-1}))\big| \\
&\leq \varepsilon (1 + 3G(1_K) - 2G(0_K)).
\end{align*}
Since $\varepsilon > 0$ was arbitrary, the theorem is proved.
\end{proof}

\begin{example}
The following simple example shows that the assumption that points of non-$G$-differentiability are left-dense cannot, in general, be omitted in the previous theorem. Indeed, let $K$ be the compact line with two points $\{0,1\}$, and define functions $F$ and $G$ on $K$ by $F(0) = 2$, $F(1) = 3$, and $G(0) = G(1) = 1$. Then $F$ is $G$-differentiable at $0$, with $\frac{dF}{dG}(0) = 2$, but it is not $G$-differentiable at $1$.

Define $f: K \to \mathbb{R}$ by $f(0) = 2$ and $f(1) = 3$. Then
\[\int_K f \, dG = 2 \quad \text{and} \quad F(1) - \left(F(0) - f(0) G(0)\right) = 3 - (2 - 2 \cdot 1) = 3,\]
which shows that Theorem \ref{Thm:integration} fails in this case.
\end{example}

Building on \cite[Theorem 5.9]{bartle2001modern} (see also \cite[Theorem 6.5]{pouso2015new}), we present an alternative formulation of the Fundamental Theorem of Calculus in the context of compact lines.

\begin{theorem}[Fundamental Theorem of Calculus: differentiating  integrals]\label{Thm:differentiation}
Let $K$ be a compact line, and let $G: K \to \mathbb{R}$ be a non-decreasing, positive, and amenable function. Suppose that $f:K\to\mathbb{R}$ is $G$-integrable, and consider the function $F:K\to \mathbb{R}$ given by the formula
\[F(x) = \int_{0_K}^x f \, dG.\]
 Then there exists a $G$-null set $Z \subset K$ such that, for all $x \in K \setminus Z$, the derivative $\frac{dF}{dG}(x)$ exists and satisfies  
\[\frac{dF}{dG}(x) = f(x).\]
\end{theorem}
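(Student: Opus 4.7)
Proof proposal: The plan is to split the argument according to whether $L_G(x)\ne G(x)$ or $L_G(x)=G(x)$. By Theorem~\ref{Thm:Amenability}, $F$ is amenable (hence regulated and right-continuous) and satisfies the identity $F(x)-L_F(x)=f(x)(G(x)-L_G(x))$ for every $x\in K$. At any point $x$ with $L_G(x)\ne G(x)$, Proposition~\ref{Pro:AuXDiff} immediately yields that $F$ is $G$-differentiable at $x$ with
\[
\frac{dF}{dG}(x)\;=\;\frac{F(x)-L_F(x)}{G(x)-L_G(x)}\;=\;f(x),
\]
so no exceptional set is needed at such points. What remains is to control the set of points where $L_G(x)=G(x)$ and the conclusion fails.

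For each rational $\alpha>0$ I define
\[
Z_\alpha \;=\; \bigl\{\,x\in K \,:\, L_G(x)=G(x),\ \forall V\text{ nbhd of }x\ \exists y\in V,\ |F(y)-F(x)-f(x)(G(y)-G(x))|>\alpha|G(y)-G(x)|\,\bigr\},
\]
and let $C\subseteq K$ be the open set of points on which $G$ is constant in some neighborhood. By Theorem~\ref{Thm:Additivity}, $F$ is also constant on such a neighborhood, so $C$ is exactly the set of points violating condition~\ref{it:diff_defin1} of Definition~\ref{def:diff}; its connected components are open intervals of zero $|\mu_G|$-measure, and inner regularity of $|\mu_G|$ shows $C$ is $G$-null. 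Setting $Z:=C\cup\bigcup_{n\ge 1}Z_{1/n}$, for every $x\notin Z$ with $L_G(x)=G(x)$ both conditions \ref{it:diff_defin1} and \ref{it:diff_defin2} of Definition~\ref{def:diff} are satisfied with $D=f(x)$ (the extra assumption for left-isolated $x$ being automatic from $F(x)-L_F(x)=f(x)(G(x)-L_G(x))=0$), and $\frac{dF}{dG}(x)=f(x)$ follows.

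The core of the proof is to show $|\mu_G|^*(Z_\alpha)=0$ for every $\alpha>0$. I split $Z_\alpha=Z_\alpha^R\cup Z_\alpha^L$ according to whether, in every neighborhood of $x$, the bad $y$ can be chosen to the right or to the left of $x$; intersecting witnessing neighborhoods shows this is a decomposition. Treating $Z_\alpha^R$ first, fix $\varepsilon>0$ and an $\varepsilon$-gauge $\delta$ for $f$ as in Lemma~\ref{lem:SH}, and let $\mathcal F$ be the family of closed intervals $[x,b]\subseteq\delta(x)$ with $x\in Z_\alpha^R$, $x<b$, and $|F(b)-F(x)-f(x)(G(b)-G(x))|>\alpha(G(b)-G(x))$. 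This is an admissible covering of $Z_\alpha^R$: if $x\in Z_\alpha^R$ is not covered by finitely many disjoint $J_1,\dots,J_n\in\mathcal F$, then since $\bigcup J_i$ is closed there is an open interval $V$ around $x$ disjoint from it, and by definition of $Z_\alpha^R$ some $b>x$ in $V\cap\delta(x)$ witnesses the bad inequality. Theorem~\ref{Thm:vitali} then produces, for any $\eta>0$, a finite pairwise disjoint subfamily $\mathcal H=\{[x_k,b_k]\}$ with $|\mu_G|^*(Z_\alpha^R\setminus\bigcup\mathcal H)<\eta$; and since $L_G(x_k)=G(x_k)$ we have $|\mu_G|([x_k,b_k])=G(b_k)-G(x_k)$. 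Using $\int_{x_k}^{b_k}f\,dG = F(b_k)-F(x_k)+f(x_k)G(x_k)$ from Theorem~\ref{Thm:Additivity}, Corollary~\ref{corol:SH} applied to the $\delta$-fine tagged system $\{([x_k,b_k],x_k)\}$ gives
\[
\sum_k \bigl|F(b_k)-F(x_k)-f(x_k)(G(b_k)-G(x_k))\bigr|\;\le\;4\varepsilon,
\]
so the bad-inequality lower bound on each summand forces $\sum_k|\mu_G|([x_k,b_k])\le 4\varepsilon/\alpha$ and therefore $|\mu_G|^*(Z_\alpha^R)\le\eta+4\varepsilon/\alpha$; letting $\eta,\varepsilon\to 0$ finishes this case. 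The set $Z_\alpha^L$ is treated symmetrically, using the half-open intervals $(a,x]$ in the Vitali step so that $|\mu_G|((a,x])=G(x)-G(a)$ exactly (avoiding a possible jump of $G$ at $a$) and the corresponding closed intervals $[a,x]$ tagged at $x$ for Saks--Henstock.

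The main technical obstacle is the precise alignment of three quantities along the Vitali-selected intervals: the Saks--Henstock bound on $\sum|f(t_k)(G(b_k)-G(a_k))-(F(b_k)-F(a_k))|$, the $|\mu_G|$-measure used by Theorem~\ref{Thm:vitali}, and the factor $G(b_k)-G(a_k)$ appearing in the bad inequality. Placing the tag at the endpoint where $L_G=G$ (which is always possible because the tag lies in $Z_\alpha$) collapses $|\mu_G|$ of the chosen interval to exactly $G(b_k)-G(a_k)$, so the three quantities match up; this is precisely why one decomposes $Z_\alpha$ into $Z_\alpha^R\cup Z_\alpha^L$ and switches between closed and half-open intervals depending on which side one is working on.
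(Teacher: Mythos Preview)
Your overall strategy coincides with the paper's: dispose of the jump points $\{L_G\neq G\}$ via Proposition~\ref{Pro:AuXDiff} and Theorem~\ref{Thm:Amenability}, then combine the Vitali covering theorem with the Saks--Henstock lemma to show that each $Z_\alpha$ is $G$-null. Your treatment of $Z_\alpha^R$ and $Z_\alpha^L$ (including the switch between closed and half-open intervals so that the $\mu_G$-measure aligns with $G(b_k)-G(a_k)$) is correct and is essentially what the paper does.

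There is, however, a genuine gap in your handling of the set $C$. As you define it, $C$ is the set of \emph{all} points admitting a neighbourhood on which $G$ is constant; you then claim its connected components are open intervals of zero $|\mu_G|$-measure and conclude via inner regularity that $C$ is $G$-null. This is false in general. Any isolated point $x$ (or more generally any left-isolated point) has $\{x\}$, or $[x,b)$, as an open neighbourhood on which $G$ is trivially constant, so $x\in C$ regardless of whether $G$ jumps at $x$. But $\mu_G(\{x\})=G(x)-L_G(x)$ need not vanish. Concretely, take $K=\{0,1\}$ with $G(0)=0$, $G(1)=1$: both points are isolated, so $C=K$, yet $\mu_G(\{1\})=1$ and $C$ is not $G$-null. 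Your connected-component claim already fails here, since $\{1\}$ is a component of $C$ with positive mass.

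The exceptional set you actually need is $A:=C\cap\{L_G=G\}$ (points with $L_G(x)\neq G(x)$ were already handled in your first paragraph, so including them in $Z$ is harmless only if doing so does not enlarge the measure). The paper shows $A$ is $G$-null not by a components/inner-regularity argument---which would also be delicate because $A$ need not be open and, in a non-separable compact line, $\mu_G$-measure zero does not obviously transfer to the countable-open-interval outer measure $|\mu_G|^*$---but by a second application of the Vitali theorem: each $x\in A$ admits a filter base of closed intervals of $\mu_G$-measure zero (using $L_G(x)=G(x)$ to kill the left-endpoint atom), and Theorem~\ref{Thm:vitali} then gives $\mu_G^*(A)<\varepsilon$ directly.
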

\begin{proof}
According to Theorem~\ref{Thm:Amenability}, the primitive function $F$ is amenable in $K$. Therefore, by Proposition~\ref{Pro:AuXDiff}, for every $x \in K$ such that $G(x) \neq L_G(x)$, the function $F$ is $G$-differentiable at $x$, and, once again by Theorem~\ref{Thm:Amenability}, we have
\[\frac{dF}{dG}(x) = \frac{F(x) - L_F(x)}{G(x) - L_G(x)} = f(x).\]

We deduce that the set $Z$ of all points $x \in K$ where either $\frac{dF}{dG}(x)$ does not exist, or it exists but differs from $f(x)$, must satisfy $G(x) = L_G(x)$. Let us define $U = \{x \in K : G(x) = L_G(x)\}$.
From \ref{it:diff_defin1} and \ref{it:diff_defin2}, we can partition $Z$ into two subsets $A$ and $B$ where: 
\begin{itemize}
    \item $A$ consists of all points $x \in U$ for which there exists an neighborhood $V$ of $x$ such that $G$ is constant on $V$, and
\item  $B$ is composed of all points $x \in U \setminus A$ for which there exists $\alpha(x) > 0$ such that, for every neighborhood $V$ of $x$, there exists $c_{x,V} \in V \setminus \{x\}$ satisfying
\begin{equation}\label{RelAuxNotDif}\left|F(x) - F(c_{x,V}) - f(x)\big(G(x) - G(c_{x,V})\big)\right| 
> \alpha(x) \left|G(x) - G(c_{x,V})\right|.
\end{equation}
\end{itemize}
To prove the theorem, it suffices to show that both sets $A$ and $B$ are $G$-null.

\medskip
 \textbf{Claim 1:} $A$ is $G$-null.
\medskip

For each $x \in A$, there exists a neighborhood $V$ of $x$ such that $F$ is constant on $V$. Thus, we may fix a filter basis at $x$, denoted by $\mathcal{F}_x$, consisting of closed intervals with the following properties:  
\begin{itemize}
    \item If $x$ is left-isolated, then $\mathcal{F}_x$ consists of intervals of the form $[x, z]$ with $x \leq z$ and $G(z) = G(x)$.
    \item If $x$ is left-dense, then $\mathcal{F}_x$ consists of intervals of the form $[a, b]$ with $a \leq x \leq b$ and $G(b) = L_G(a) = G(x)$.
\end{itemize}
Since each $\mathcal{F}_x$ is a filter basis of closed subsets of $K$, it follows that $\mathcal{F} = \bigcup_{x \in A} \mathcal{F}_x$ is an admissible covering for $A$ in the sense of Definition \ref{Def:AdmissibleCovering}. Thus, by Theorem \ref{Thm:vitali}, given $\varepsilon > 0$, there exists a finite, pairwise disjoint collection $\mathcal{H} = \{J_1, \dots, J_n\} \subset \mathcal{F}$ such that $\mu^*_G \left( A \setminus \bigcup \mathcal{H} \right) < \varepsilon.$

By the construction of the elements of $\mathcal{F}$, we have  
\[\mu(J_1 \cup \dots \cup J_n) = \mu(J_1) + \dots + \mu(J_n) = 0 + \dots + 0 = 0.\]  
Therefore, $\mu^*_G(A) < \varepsilon$, and since $\varepsilon > 0$ is arbitrary, Claim 1 is established.

\medskip
\textbf{Claim 2:} $B$ is $G$-null.
\medskip

Let $C$ be the subset consisting of all points $x \in B$ such that, for every open interval $V$ containing $x$, there exists $c_{x,V} > x$ satisfying relation~\eqref{RelAuxNotDif}. In this case, we fix $y_{x,V} = x$ and $z_{x,V} = c_{x,V}$, and define the interval $J_{x,V} = [y_{x,V}, z_{x,V}]$.

On the other hand, if $x \in B \setminus C$, then for every open interval $V$ containing $x$, there exists $c_{x,V} < x$ such that relation~\eqref{RelAuxNotDif} holds. In this case, we set $y_{x,V} = c_{x,V}$ and $z_{x,V} = x$. If $y_{x,V} \in C$, we define $J_{x,V} = [y_{x,V}, z_{x,V}]$; otherwise, we set $J_{x,V} = (y_{x,V}, z_{x,V}]$. 

An important observation about the intervals $J_{x,V}$ is that, if $\mu_G$ denotes the Radon measure induced by $G$, then for all $x$ and all open intervals $V$ containing $x$, we have
\begin{equation}\label{Rel:MeasureFinal}
\mu_G(J_{x,V}) = G(z_{x,V}) - G(y_{x,V}).
\end{equation}

Next, for each $n\in\mathbb N$, let $B_n=\{x\in B:\alpha(x)\geq 1/n\}$. Since $B=\bigcup_{n=1}^\infty B_n$, to conclude the proof it suffices to show that each $B_n$ is $G$-null. 
    
Fix $n \in \mathbb{N}$, and let $\varepsilon > 0$ be arbitrary. Since $f$ is $G$-integrable, we may fix a $\frac{\varepsilon}{n}$-gauge $\delta$ for $f$ (see Remark \ref{Rem:EpsilonGauge2}) and consider the collection  
\[\mathcal{F} = \{J_{x,V} : x \in B_n \text{ and } V\subset \delta(x)\text{ is an open interval containing } x\}.\]  
We claim that this family forms an admissible covering of $B_n$. Indeed, let $J_{x_1,V_1} < J_{x_2,V_2} < \dots < J_{x_m,V_m}$ be pairwise disjoint elements of $\mathcal{F}$, and let $a \in B_n \setminus \bigcup_{i=1}^m J_{x_i,V_i}$.
If there exists an open interval $V$ containing $a$ such that $V \cap \left( \bigcup_{i=1}^m J_{x_i,V_i} \right) = \emptyset$, then we are done, since for $V' = V \cap \delta(a)$, we have $J_{a,V'} \in \mathcal{F}$. Assume instead that every open interval $V$ containing $a$ intersects $\bigcup_{i=1}^m J_{x_i,V_i}$. By the construction of the sets $J_{x,V}$, this implies that there exists $1 \leq i \leq m$ such that $J_{x_i,V_i}$ is a semi-open interval of the form $(y_{x_i,V_i}, z_{x_i,V_i}]$ and $a = y_{x_i,V_i}$.
It follows that $0_K < a < 1_K$, that $a$ is left-dense, and that $a \notin C$. If $i = 1$, let $V = \delta(a)$; if $i > 1$, let $V = (z_{x_{i-1},V_{i-1}}, z_{x_i,V_{i}}) \cap \delta(a)$. By our construction, the interval $J_{a,V}$ belongs to $\mathcal{F}$ and is contained in the gap between the intervals $J_{x_i,V_i}$. More precisely, we have
\[J_{a,V} \cap \left( \bigcup_{i=1}^m J_{x_i,V_i} \right) = \emptyset,\]
which completes the proof that $\mathcal{F}$ is admissible.

Next, according to Theorem \ref{Thm:vitali}, there exists a finite subcollection $\mathcal{H} = \{J_{x_1,V_1}, \dots, J_{x_m,V_m}\} \subset \mathcal{F}$ consisiting of pairwise disjoint intervals such that $\mu_G^*(E_n \setminus \bigcup \mathcal{H}) < \varepsilon$,
where $\mu_G^*$ denotes the exterior measure induced by $\mu_G$.
On one hand, by employing Theorem~\ref{Thm:Additivity}, we obtain
\begin{align*}
    &\sum_{i=1}^m\left|f(x_i)(G(z_{x_i,V_i}) - G(y_{x_i,V_i})) + f(y_{x_i,V_i})G(y_{x_i,V_i}) - \int_{y_{x_i,V_i}}^{z_{x_i,V_i}} f\,dG\right|\\
    &= \sum_{i=1}^m\left|f(x_i)(G(z_{x_i,V_i}) - G(y_{x_i,V_i})) - (F(z_{x_i,V_i}) - F(y_{x_i,V_i}))\right| \\
    &> \sum_{i=1}^m \alpha(x_i)\big(G(z_{x_i,V_i}) - G(y_{x_i,V_i})\big) \\
    &\geq \frac{1}{n} \sum_{i=1}^m \big(G(z_{x_i,V_i}) - G(y_{x_i,V_i})\big).
\end{align*}

On the other hand, observe that $\mathcal{S} = \{([y_{x_1,V_1}, z_{x_1,V_1}], x_1), \dots, ([y_{x_m,V_m}, z_{x_m,V_m}], x_m)\}$ is a $\delta$-fine tagged system in $K$. It then follows from Lemma~\ref{lem:SH} (Saks--Henstock) that
\[\sum_{i=1}^m \left|f(x_i)(G(z_{x_i,V_i}) - G(y_{x_i,V_i})) + f(y_{x_i,V_i})G(y_{x_i,V_i}) - \int_{y_{x_i,V_i}}^{z_{x_i,V_i}} f\,dG\right| \leq \frac{4\varepsilon}{n}.\]

Combining these inequalities and recalling relation~\eqref{Rel:MeasureFinal}, we get
\[\mu_G\left(\bigcup \mathcal{H}\right) = \sum_{i=1}^m \big(G(z_{x_i,V_i}) - G(y_{x_i,V_i})\big) < 4\varepsilon.\]
Therefore,
\[\mu_G^*(E_n) \leq \mu_G^*\left(E_n \setminus \bigcup \mathcal{H}\right) + \mu_G^*\left(\bigcup \mathcal{H}\right) < 5\varepsilon,\]
which concludes the proof.
\end{proof}

\section{Convergence theorems and applications}\label{sec:convergence}

In what follows, we adapt the classical convergence theorems (see \cite[Section 8]{bartle2001modern}) to our integration setting. As an application, we obtain a characterization of Lebesgue integrability with respect to positive Radon measures in terms of the presented integral. Let us  begin with the following result:

\begin{theorem}[Monotone Convergence]\label{Thm:MCT}
Let $K$ be a compact line and $G: K \to \mathbb{R}$ a nondecreasing positive amenable function. Let $(f_m)_{m\in \mathbb{N}}$ be a monotone sequence of $G$-integrable functions $f_m: K \to \mathbb{R}$ that converges pointwise $G$-almost everywhere to a function $f: K \to \mathbb{R}$. If the sequence $\left(\int_K f_m \, dG\right)_{m}$ is convergent, then $f$ is also $G$-integrable on $K$ and the following equality holds:
\[\lim_{m\to \infty} \int_K f_m \, dG = \int_K f \, dG.\]
\end{theorem}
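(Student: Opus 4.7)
The approach is to adapt the classical Henstock--Kurzweil Monotone Convergence proof, the main tool being a Hahn-type composite gauge combining $(\varepsilon/2^m)$-gauges for each $f_m$ together with the Saks--Henstock lemma (Lemma~\ref{lem:SH}) applied level-by-level. I would first perform the standard reductions: by passing to $(-f_m)$ if necessary, assume $(f_m)$ is nondecreasing; using Proposition~\ref{prop:Gae}, modify $f_m$ and $f$ on the $G$-null set where pointwise convergence fails so that $f_m \nearrow f$ on all of $K$ without affecting integrability or integrals; and subtracting $f_1$ (which is $G$-integrable by Proposition~\ref{Prop:BilinearityOfHKIntegral}) reduces the problem to $0 \leq f_m \leq f_{m+1} \nearrow f$. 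Write $A := \lim_m \int_K f_m\,dG$; elementary Riemann-sum comparison (using that $G$ is positive and nondecreasing) shows the $G$-integral is monotone on pointwise-ordered integrands, hence $\int_K f_m\,dG \nearrow A$.

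Fix $\varepsilon > 0$. Choose $N$ with $A - \int_K f_N\,dG < \varepsilon$, and for each $x \in K$ let $m(x)$ be the least integer $\geq N$ with $f(x) - f_{m(x)}(x) < \varepsilon$. For each $m$, take an $(\varepsilon/2^m)$-gauge $\delta_m$ for $f_m$ in the strengthened sense of Remark~\ref{Rem:EpsilonGauge2}, and refine cumulatively (by intersection) so that $\delta_{m+1}$ refines $\delta_m$. Define $\delta(x) := \delta_{m(x)}(x)$; by construction $\delta$ refines $\delta_N$, so every $\delta$-fine partition is automatically $\delta_N$-fine. Let $P = \{([x_{i-1}, x_i], t_i) : 1 \leq i \leq n\}$ be a $\delta$-fine partition of $K$, refined via Remark~\ref{Rem:EnlargePartitions} so that $0_K$ is a tag and every division point $x_r$ appears as a tag. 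Grouping the indices by $k = m(t_i)$ yields finitely many nonempty blocks $J_k$, and each subfamily $\{([x_{i-1}, x_i], t_i) : i \in J_k\}$ is a $\delta_k$-fine tagged system in $K$, so Lemma~\ref{lem:SH} gives
\[
\Bigl|\sum_{i \in J_k}\!\Bigl(f_k(t_i)(G(x_i)-G(x_{i-1})) + f_k(x_{i-1})G(x_{i-1}) - \int_{x_{i-1}}^{x_i}\!f_k\,dG\Bigr)\Bigr| \leq \frac{2\varepsilon}{2^k}.
\]
Summing over $k$ and using the triangle inequality bounds the total Saks--Henstock defect by $2\varepsilon$, while the pointwise bound $0 \leq f(t_i) - f_{m(t_i)}(t_i) < \varepsilon$ shows that replacing $f$ by $f_{m(\cdot)}$ in the Riemann sum costs at most $\varepsilon\,G(1_K)$.

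The easy half is the lower bound: $f \geq f_N$ pointwise together with positivity and nondecrease of $G$ give $S(f,G,P) \geq S(f_N,G,P)$, and since $P$ is $\delta_N$-fine, $S(f_N,G,P) \geq \int_K f_N\,dG - \varepsilon > A - 2\varepsilon$. The main obstacle will be the matching upper bound, because the correction $-f(c)G(c)$ in Theorem~\ref{Thm:Additivity} fails to telescope cleanly when the integrand $f_{m(t_i)}$ changes from component to component. My plan is to convert $\sum_i f_{m(t_i)}(t_i)(G(x_i)-G(x_{i-1}))$ via the Saks--Henstock estimate above into $\sum_i \int_{x_{i-1}}^{x_i} f_{m(t_i)}\,dG - \sum_i f_{m(t_i)}(x_{i-1})G(x_{i-1})$ up to $2\varepsilon$, then sandwich both parts using the monotone chain $f_N \leq f_{m(t_i)} \leq f_M$ with $M := \max_i m(t_i)$, and iterate Theorem~\ref{Thm:Additivity} on the fixed-integrand bounding sums to express them as $\int_K f_j\,dG$ plus interior residuals $\sum_r f_j(x_r)G(x_r)$. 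The stubborn residuals $\sum_r(f_M(x_r) - f_N(x_r))G(x_r)$ are the crux; handling them will exploit that every $x_r$ has been made a tag, so $f(x_r) - f_{m(x_r)}(x_r) < \varepsilon$, together with a finer gauge refinement at division points. Once $S(f,G,P) \leq A + C\varepsilon$ is obtained, combined with the lower bound one gets $|S(f,G,P) - A| \leq C\varepsilon$ for some universal constant $C$, and letting $\varepsilon \to 0$ shows $f$ is $G$-integrable with $\int_K f\,dG = A$.
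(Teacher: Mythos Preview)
Your overall architecture (Hahn-type composite gauge $\delta(x)=\delta_{m(x)}(x)$, grouping tags by level and applying Saks--Henstock on each tagged system, then sandwiching) is exactly the paper's approach, and your terms $I_1$ and $I_2$ match the paper's first two error pieces. The gap is in your treatment of the third piece --- what you call the ``stubborn residuals''.

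Your plan is to sandwich $\sum_i \int_{x_{i-1}}^{x_i} f_{m(t_i)}\,dG$ and $\sum_i f_{m(t_i)}(x_{i-1})G(x_{i-1})$ \emph{separately} by the corresponding expressions for $f_N$ and $f_M$, then recombine via iterated additivity. Doing this produces the cross term $\sum_r (f_M-f_N)(x_r)G(x_r)$ that you flag. But this sum is genuinely out of control: even after making every division point $x_r$ a tag, what you get is $f(x_r)-f_{m(x_r)}(x_r)<\varepsilon$, not a bound on $f_M(x_r)-f_N(x_r)$; and even if you did have a per-term bound of order $\varepsilon\,G(1_K)$, the number $n$ of division points is unbounded as the partition varies, so the sum does not stay below a fixed multiple of $\varepsilon$. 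A ``finer gauge refinement at division points'' cannot help, since you do not get to choose which points become division points of a $\delta$-fine partition.

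The paper avoids this obstacle by \emph{not} separating the two parts. The key observation (via Propositions~\ref{prop:IntegrationOnASubinterval} and~\ref{Prop:int singleton}) is that for $J_i=(x_{i-1},x_i]$ one has
\[
\int_{x_{i-1}}^{x_i} f_k\,dG \;-\; f_k(x_{i-1})G(x_{i-1}) \;=\; \int_K f_k\big|_{J_i}\,dG.
\]
This combined quantity is monotone in $k$ (since $G$ is positive nondecreasing), and summing over $i$ together with $J_0=\{0_K\}$ gives exactly $\int_K f_k\,dG$ by linearity, with no leftover boundary terms. The sandwich $\int_K f_{N_0}|_{J_i}\,dG \le \int_K f_{m(t_i)}|_{J_i}\,dG \le \int_K f_{N_1}|_{J_i}\,dG$ then yields directly
\[
\int_K f_{N_0}\,dG \;\le\; \sum_{i} \int_K f_{m(t_i)}\big|_{J_i}\,dG \;\le\; \int_K f_{N_1}\,dG \;\le\; A,
\]
which is the missing upper bound. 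Once you use this identity in place of your two-part sandwich, the rest of your argument goes through.
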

\begin{proof}
Let $V$ be the set of points where the sequence does not converge pointwise to $f$. Since $V$ is $G$-null, we may assume without loss of generality that $V \neq \emptyset$. By Proposition~\ref{prop:Gae}, we can redefine each function $f_m$, for $m \in \mathbb{N}$, as well as $f$, to be zero on $V$. After this modification, the convergence holds pointwise on all of $K$.

Let $\varepsilon > 0$ be arbitrary. Without loss of generality, assume that the sequence $(f_m)_{m \in \mathbb{N}}$ is nondecreasing. Since $\left(\int_K f_m \, dG\right)_{m \in \mathbb{N}}$ is convergent, we can define 
\[A = \lim_{m \to \infty} \int_K f_m \, dG = \sup_{m \in \mathbb{N}} \int_K f_m \, dG.\] 
Choose $N_0 \in \mathbb{N}$ such that 
\[\left| A - \int_K f_m \, dG \right| < \frac{\varepsilon}{3}\]
for all $m \geq N_0$.
Since each function $f_m$ is $G$-integrable, there exists a gauge $\delta_m$ such that whenever $P \ll \delta_m$, we have
\[\left| S(f_m,G,P) - \int_K f_m \, dG \right| < \frac{\varepsilon}{3\cdot 2^{m+1}}.\]
Because $(f_m)_{m}$ converges pointwise to $f$ on the compact space $K$, for each $x \in K$, there exists an integer $m(x)>N_0$ such that 
\[|f_{m(x)}(x) - f(x)| < \frac{\varepsilon}{3(G(1_K)+1)}.\]
Define a gauge $\delta$ by $\delta(x) = \delta_{m(x)}(x)$ for each $x \in K$. If $P =\{([x_{i-1},x_i],t_i): 1\leq i \leq n\}$ is a $\delta$-fine partition, we have the following estimation:
\begin{align*}
\left| S(f,G,P) - A \right| 
&\leq \left| S(f,G,P) -\left(f_{m(0_K)}(0_K) G(0_K)+\sum_{i=1}^n f_{m(t_i)}(t_i) (G(x_{i}) - G(x_{i-1})) \right)\right| \\
&\quad + \left| \sum_{i=1}^n \left(f_{m(t_i)}(t_i)(G(x_{i}) - G(x_{i-1}))+f_{m(t_i)}(x_{i-1}) G(x_{i-1}) - \int_{x_{i-1}}^{x_i} f_{m(t_i)} \, dG\right) \right| \\
&\quad + \left|f_{m(0_K)}(0_K) G(0_K)+\sum_{i=1}^n\left(\int_{x_{i-1}}^{x_i} f_{m(t_i)}\, dG -f_{m(t_{i})}(x_{i-1}) G(x_{i-1})\right)-A\right| .
\end{align*}

Let us label each term on the right-hand side of the relation above as $I_1$, $I_2$, and $I_3$, in that order. We will now analyze each of these terms individually. For the first term, we have
\begin{align*}
I_1&\leq |f(0_K) - f_{m(0_K)}(0_K)| G(0_K)+\sum_{i=1}^n |f(t_i) - f_{m(t_i)}(t_i)| (G(x_{i}) - G(x_{i-1}))\\
&<\frac{\varepsilon}{3(G(1_K)+1)}G(1_K)<\frac{\varepsilon}{3}.
\end{align*}
For the second term, let $p_1, \ldots, p_s$ be integers such that the collection $\{\varGamma_1, \ldots, \varGamma_s\}$, where  
$\varGamma_j = \{i \in \{1, \ldots, n\} : m(t_i) = p_j\}$ for each $1 \leq j \leq s$, forms a partition of $\{1, \ldots, n\}$.
For each $1 \leq j \leq s$, the collection $\mathcal{S}_j = \{([x_{i-1}, x_i], t_i) : i \in \varGamma_j\}$ forms a tagged system. Therefore, by applying Lemma~\ref{lem:SH} (Saks--Henstock), we obtain, for each $j \in {1, \ldots, s}$:
\[\left| \sum_{i \in \varGamma_j} \left( f_{p_j}(t_i)(G(x_{i}) - G(x_{i-1})) + f_{p_j}(x_{i-1}) G(x_{i-1}) - \int_{x_{i-1}}^{x_i} f_{p_j} \, dG \right) \right| < \frac{\varepsilon}{3 \cdot 2^{p_j}}.\]
It follows that 
\begin{align*}
I_2 &\leq \sum_{j=1}^k\frac{\varepsilon}{3 \cdot 2^{p_j}}<\frac{\varepsilon}{3}\sum_{j=1}^\infty\frac{\varepsilon}{2^{j}}=\frac{\varepsilon}{3}.
\end{align*}
  For the third term, we set $J_0=\{0_K\}$, and for each $1\leq i\leq n$, we define $J_i=(x_{i-1},x_i]$. Observe that, for each $1\leq i \leq n$, Propositions~\ref{prop:IntegrationOnASubinterval} and~\ref{Prop:int singleton} yield:  
\[f_{m(0_K)}(0_K) G(0_K) = \int_K f_{m(t_0)}|_{J_0} \, dG,\]
and for each $i \in \{1, \ldots, n\}$,
\begin{align*}
\int_{x_{i-1}}^{x_i} f_{m(t_i)} \, dG & - f_{m(t_i)}(x_{i-1}) G(x_{i-1})= \left( \int_{x_{i-1}}^{x_i} f_{m(t_i)} \, dG - f_{m(t_i)}(x_{i-1}) L_G(x_{i-1}) \right) \\
& \quad - f_{m(t_i)}(x_{i-1}) \left( G(x_{i-1}) - L_G(x_{i-1}) \right)= \int_K f_{m(t_i)}|_{J_i} \, dG.
\end{align*}
Therefore, we can write
\[
I_3 = \left| \sum_{i=0}^n \int_K f_{m(t_i)}|_{J_i} \, dG - A \right|.
\]

Now, let $N_1 = \max\{m(t_1), \ldots, m(t_n)\}$. Since $(f_m)_m$ is a nondecreasing sequence and $G$ is a nondecreasing positive function, we have:
\[\int_K f_{N_0}|_{J_i} \, dG \leq \int_K f_{m(t_i)}|_{J_i} \, dG \leq \int_K f_{N_1}|_{J_i} \, dG.\]
Using the additivity of the integral, we obtain:
\[
0 \leq A - \int_K f_{N_1} \, dG \leq A - \sum_{i=0}^n \int_K f_{m(t_i)}|_{J_i} \, dG \leq A - \int_K f_{N_0} \, dG < \frac{\varepsilon}{3}.
\]
Thus, we conclude that $I_3 < \frac{\varepsilon}{3}$.

Gathering all the information above, we have:
\[\left| S(f,G,P) - A \right|\leq I_1 + I_2 + I_3< \frac{\varepsilon}{3} + \frac{\varepsilon}{3} + \frac{\varepsilon}{3}= \varepsilon.\]

Hence, we deduce that $f$ is $G$-integrable and that $A = \int_K f \, dG$.
\end{proof}

With the Monotone Convergence Theorem at our disposal, we can readily adapt \cite[Theorem 8.7]{bartle2001modern} and \cite[Theorem 8.8]{bartle2001modern} to our integration framework.

\begin{lemma}[Fatou's Lemma]\label{Lem:FatouLemma}
Let $K$ be a compact line, and let $G: K \to \mathbb{R}$ be a non-decreasing, positive, and amenable function. Consider a sequence $(f_m)_{m \in \mathbb{N}}$ of $G$-integrable functions $f_m: K \to \mathbb{R}$ such that $g \leq f_m$ for every $m \in \mathbb{N}$, where $g$ is a $G$-integrable function. Then, $\liminf_{m} f_m$ is $G$-integrable, and 
\[\int_K\liminf_{m} f_m \, dG = \liminf_{m} \int_K f_m \, dG.\]
\end{lemma}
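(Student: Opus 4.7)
The plan is to reduce the statement to the Monotone Convergence Theorem (Theorem~\ref{Thm:MCT}) via the standard Fatou trick. Define $h_m := \inf_{k \geq m} f_k$; this sequence is nondecreasing, satisfies $g \leq h_m \leq f_m$, and converges pointwise to $\liminf_m f_m$. Once each $h_m$ is shown to be $G$-integrable with $\int_K h_m\, dG \leq \int_K f_m\, dG$, the nondecreasing numerical sequence $\bigl(\int_K h_m\, dG\bigr)_m$ is bounded above by $\liminf_m \int_K f_m\, dG$, and provided this latter quantity is finite (the natural reading of the hypothesis, since otherwise the assertion is vacuous), it converges. Theorem~\ref{Thm:MCT} then delivers $G$-integrability of $\liminf_m f_m$ together with
\[
\int_K \liminf_m f_m\, dG = \lim_m \int_K h_m\, dG \leq \liminf_m \int_K f_m\, dG,
\]
which is the classical form of the conclusion (the statement as written with equality appears to be a typographical slip, since the reverse inequality already fails for the Lebesgue integral, e.g.\ $f_m = m\chi_{[0,1/m]}$ on $[0,1]$).

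The step I expect to be the main obstacle is the $G$-integrability of $h_m$, which I would handle in two stages. First, I would establish that if $u$ and $v$ are $G$-integrable functions both bounded below by $g$, then $\inf(u,v)$ is $G$-integrable. Writing $\inf(u,v) = \tfrac{1}{2}(u + v - |u - v|)$ and invoking bilinearity (Proposition~\ref{Prop:BilinearityOfHKIntegral}), this reduces to proving that $|u - v|$ is $G$-integrable. By linearity, $u - v$ is itself $G$-integrable, and the pointwise estimate $|u - v| \leq (u - g) + (v - g)$ exhibits a $G$-integrable nonnegative majorant, so Corollary~\ref{Cor:|f|IsIntegrable} gives $G$-integrability of $|u - v|$. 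Iterating this construction, $h_m^{(n)} := \inf_{m \leq k \leq m + n} f_k$ is $G$-integrable for every $n$.

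In the second stage, for each fixed $m$, the sequence $(h_m^{(n)})_n$ is nonincreasing in $n$, bounded below pointwise by $g$, and converges pointwise to $h_m$. The numerical sequence $\int_K h_m^{(n)}\, dG$ is then nonincreasing and bounded below by $\int_K g\, dG$, hence convergent. A further application of Theorem~\ref{Thm:MCT} yields $G$-integrability of $h_m$ together with $\int_K h_m\, dG = \lim_n \int_K h_m^{(n)}\, dG \leq \int_K f_m\, dG$, where the final inequality uses $h_m^{(n)} \leq f_m$ for every $n$. This closes the reduction, and the conclusion follows from the first paragraph. The essential technical ingredient beyond the Monotone Convergence Theorem is thus the passage from $u - v$ to $|u - v|$, whose justification hinges on the nontrivial characterization of absolute $G$-integrability provided by Corollary~\ref{Cor:|f|IsIntegrable}.
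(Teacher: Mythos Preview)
Your proposal is correct and follows essentially the same route as the paper: define the running infima $h_m=\inf_{k\ge m}f_k$, establish their $G$-integrability via Corollary~\ref{Cor:|f|IsIntegrable} (for finite infima) followed by Theorem~\ref{Thm:MCT} (to pass to the full infimum), and then apply Theorem~\ref{Thm:MCT} once more to the nondecreasing sequence $(h_m)_m$. Your write-up is in fact more explicit than the paper's, which defers the integrability of $\varphi_m=h_m$ to a reference, and your remark that the displayed equality in the statement should be the inequality $\le$ is well taken.
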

\begin{proof}
Reasoning as in \cite[Theorem 8.6]{bartle2001modern}, but employing Corollary~\ref{Cor:|f|IsIntegrable} and Theorem~\ref{Thm:MCT}, one can show that the function $\varphi_m: K \to \mathbb{R}$ defined by $\varphi_m(x) = \inf\{f_n(x) : n \geq m\}$ is $G$-integrable for each $m \in \mathbb{N}$, and satisfies
\[\int_K g \, dG \leq \int_K \varphi_m \, dG \leq \int_K f_m \, dG.\]
Since the sequence $\left(\int_K \varphi_m \, dG \right)_{m}$ is bounded and monotone, it converges to a real number. Moreover, as the sequence $(\varphi_m)_m$ is monotone and converges pointwise to $\liminf_m f_m$, the conclusion follows by another application of Theorem~\ref{Thm:MCT}.
\end{proof}

\begin{theorem}[Dominated Convergence]\label{Thm:Dominated}
Let $K$ be a compact line, and let $G: K \to \mathbb{R}$ be a non-decreasing, positive, and amenable function. Consider a sequence $(f_m)_{m \in \mathbb{N}}$ of $G$-integrable functions $f_m: K \to \mathbb{R}$ such that $g\leq f_m \leq h$ for every $m \in \mathbb{N}$, where $g,h$ are $G$-integrable functions. If $(f_m)_m$ converges pointwise to a function $f$, then 
$f$ is $G$-integrable and holds 
\[\lim_{m\to \infty} \int_K f_m \, dG = \int_K f \, dG.\]
\end{theorem}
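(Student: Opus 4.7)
The plan is to reduce to Fatou's Lemma (Lemma~\ref{Lem:FatouLemma}), applied twice, in the manner of the classical Dominated Convergence argument (compare \cite[Theorem 8.9]{bartle2001modern}). The strategy is to sandwich $\int_K f\,dG$ between $\liminf_m \int_K f_m\,dG$ and $\limsup_m \int_K f_m\,dG$ by exploiting the two dominating functions $g$ and $h$.

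First, I would apply Fatou to the sequence $(f_m - g)_{m \in \mathbb{N}}$. Each $f_m - g$ is $G$-integrable by linearity (Proposition~\ref{Prop:BilinearityOfHKIntegral}) and nonnegative, so the zero function serves as the required $G$-integrable lower bound. Since $f_m \to f$ pointwise, Lemma~\ref{Lem:FatouLemma} gives the $G$-integrability of $f - g$ (hence of $f$, again by linearity) together with the inequality $\int_K (f - g)\,dG \leq \liminf_m \int_K (f_m - g)\,dG$. Subtracting $\int_K g\,dG$ from both sides yields $\int_K f\,dG \leq \liminf_m \int_K f_m\,dG$.

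Second, I would apply Fatou to the sequence $(h - f_m)_{m \in \mathbb{N}}$, which is likewise nonnegative, $G$-integrable, and converges to $h - f$ pointwise. This produces $\int_K (h - f)\,dG \leq \liminf_m \int_K (h - f_m)\,dG = \int_K h\,dG - \limsup_m \int_K f_m\,dG$, from which $\limsup_m \int_K f_m\,dG \leq \int_K f\,dG$ follows. Combining the two inequalities forces equality of $\liminf$ and $\limsup$, so $\bigl(\int_K f_m\,dG\bigr)_m$ converges to $\int_K f\,dG$.

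I do not anticipate any substantial obstacle specific to the compact-line setting, since the argument is formally identical to the classical one and the analytic content has already been absorbed into Lemma~\ref{Lem:FatouLemma} and the Monotone Convergence Theorem that underlies it. The only minor bookkeeping is converting the $\liminf$ of a sign-flipped sequence into $-\limsup$, which is a purely real-variable manipulation, together with invoking linearity (Proposition~\ref{Prop:BilinearityOfHKIntegral}) to pass the constant integrals $\int_K g\,dG$ and $\int_K h\,dG$ in and out of the inequalities.
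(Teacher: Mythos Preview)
Your proposal is correct and matches the paper's approach: the paper's proof simply says ``We proceed as in \cite[Theorem 8.8]{bartle2001modern}, making use of Lemma~\ref{Lem:FatouLemma},'' which is precisely the two-fold Fatou argument you describe. The only cosmetic difference is that you subtract the dominating functions to reduce to a nonnegative sequence, whereas the paper's Fatou (Lemma~\ref{Lem:FatouLemma}) is already stated with a general $G$-integrable lower bound, so one could apply it directly to $(f_m)$ and $(-f_m)$; either route yields the same sandwich.
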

\begin{proof}
We proceed as in \cite[Theorem 8.8]{bartle2001modern}, making use of Lemma~\ref{Lem:FatouLemma}.
\end{proof}

In this paper, given a compact line $K$, a function $f: K \to \mathbb{R}$ is said to be \emph{measurable} if it is measurable with respect to the Borel $\sigma$-algebra on $K$. A \emph{simple function} is a measurable function with finite image; that is, there exist scalars $c_1, \ldots, c_m \in \mathbb{R}$ such that $f[K] = \{c_1, \ldots, c_m\}$ and each level set $f^{-1}[\{c_k\}]$ belongs to the Borel $\sigma$-algebra on $K$ for $1 \leq k \leq m$. As a consequence of Theorem~\ref{Thm:Dominated}, we obtain the following standard lemma.

\begin{lemma}\label{Lem:BorelCharMeasurable}
Let $K$ be a compact line, and let $G: K \to \mathbb{R}$ be a non-decreasing, positive, and amenable function, and let $\mu_G\in \mathcal{M}(K)$ be the Radon measure induced by $G$. Then every simple function $f: K\to\mathbb{R}$ is $G$-integrable, and one has
\[\int_K f\,dG \,=\, \int_K f\,d\mu_G.\]
\end{lemma}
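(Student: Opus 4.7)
The overall strategy is to reduce the identity to characteristic functions of Borel sets via linearity, and then apply a Dynkin-style monotone class argument. By the bilinearity of the $G$-integral (Proposition~\ref{Prop:BilinearityOfHKIntegral}) and the standard linearity of the Lebesgue integral with respect to $\mu_G$, it suffices to prove that for every Borel set $A \subset K$, the function $\chi_A$ is $G$-integrable and satisfies $\int_K \chi_A\,dG = \mu_G(A)$. Let $\mathcal{E}$ denote the collection of Borel sets enjoying this property.

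The first step is to verify that $\mathcal{E}$ contains all initial segments $[0_K, x]$. For $x > 0_K$, applying Proposition~\ref{prop:IntegrationOnASubinterval} to the constant function $1$ on $I = [0_K, x]$ gives
\[
\int_K \chi_{[0_K, x]}\,dG \;=\; \int_{[0_K, x]} 1\,dG - L_G(0_K) \;=\; G(x) - 0 \;=\; G(x),
\]
since the inner integral equals $G(x)$ by a direct telescoping Riemann-sum computation. On the other hand, $\mu_G([0_K, x]) = \mu_G(\{0_K\}) + \mu_G((0_K, x]) = G(0_K) + (G(x) - G(0_K)) = G(x)$. The degenerate case $A = \{0_K\}$ is handled by Proposition~\ref{Prop:int singleton}, yielding $\int_K \chi_{\{0_K\}}\,dG = G(0_K) - L_G(0_K) = G(0_K) = \mu_G(\{0_K\})$.

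Next, I would show that $\mathcal{E}$ is a Dynkin $\lambda$-system. That $K \in \mathcal{E}$ follows from Theorem~\ref{Thm:ContinuousIsGintegrable} together with the Riemann-sum identity $\int_K 1\,dG = G(1_K) = \mu_G(K)$. Closure under complements is immediate from linearity, giving $\int_K \chi_{K\setminus A}\,dG = G(1_K) - \mu_G(A) = \mu_G(K\setminus A)$. For closure under countable disjoint unions $A = \bigsqcup_{n} A_n$ with $A_n \in \mathcal{E}$, the partial sums $S_N = \sum_{n=1}^{N} \chi_{A_n} = \chi_{\bigsqcup_{n \leq N} A_n}$ form a nondecreasing sequence of $G$-integrable functions satisfying $\int_K S_N\,dG = \mu_G\big(\bigsqcup_{n \leq N} A_n\big) \leq G(1_K)$, so the integrals converge. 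Since $S_N \nearrow \chi_A$ pointwise, the Monotone Convergence Theorem (Theorem~\ref{Thm:MCT}) yields $\chi_A \in \mathcal{E}$ with $\int_K \chi_A\,dG = \mu_G(A)$ by $\sigma$-additivity of $\mu_G$.

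The collection $\mathcal{P} = \{[0_K, x] : x \in K\}$ is a $\pi$-system contained in $\mathcal{E}$, so Dynkin's $\pi$-$\lambda$ theorem delivers $\sigma(\mathcal{P}) \subset \mathcal{E}$. The principal obstacle that I anticipate is showing that $\sigma(\mathcal{P})$ is actually the full Borel $\sigma$-algebra on $K$: since compact lines need not be second countable (as with $[0, \omega_1]$ or the Aleksandrov double arrow), open sets may decompose only as uncountable disjoint unions of order intervals. To circumvent this, I would enlarge $\mathcal{P}$ to the full $\pi$-system of all order intervals of $K$ (each of which lies in $\mathcal{E}$ by the base cases together with linearity and Proposition~\ref{Prop:int singleton} applied to singletons), or alternatively invoke the inner regularity of $\mu_G$ to sandwich $\chi_A$ between characteristic functions of sets already known to be in $\mathcal{E}$.
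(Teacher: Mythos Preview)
Your base case and the verification that $\mathcal E$ is a $\lambda$-system are fine, but the obstacle you flag at the end is genuine and neither of your proposed workarounds closes it. Enlarging $\mathcal P$ to all order intervals does not help: in $K=[0,\omega_1]$, the family of sets $A$ such that either $A$ or $K\setminus A$ contains a punctured tail $(\alpha,\omega_1)$ for some $\alpha<\omega_1$ is a $\sigma$-algebra (by the uncountable cofinality of $\omega_1$) that contains every interval, yet the open set of successor ordinals belongs to neither alternative. Hence the $\sigma$-algebra generated by intervals is strictly smaller than the Borel $\sigma$-algebra, and the $\pi$--$\lambda$ argument cannot reach $\chi_S$ for such $S$. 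Your second suggestion, a ``regularity sandwich'', is the right instinct but is not a proof as stated: you would first need $\chi_U\in\mathcal E$ for open $U$, and open sets in a non-separable compact line may have uncountably many convex components, so this step itself requires an approximation-modulo-null-sets argument.

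The paper sidesteps the generation problem altogether by working directly with measure approximation. Given a Borel set $B$, outer regularity of $\mu_G$ produces, for each $n$, a finite union of open intervals $J_n$ with $\mu_G(B\triangle J_n)<1/n$ (one first takes an open superset, observes that only countably many of its convex components carry positive $\mu_G$-mass, and truncates). The step functions $\varphi_n=\chi_{J_n}$ are $G$-integrable with $\int_K\varphi_n\,dG=\int_K\varphi_n\,d\mu_G$, they converge to $\chi_B$ in $L^1(\mu_G)$, and by Riesz--Fischer a subsequence converges $G$-almost everywhere. The Dominated Convergence Theorem (Theorem~\ref{Thm:Dominated}), with the constant dominator $1$, then gives both the $G$-integrability of $\chi_B$ and the desired identity. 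The essential difference is that the paper reaches arbitrary Borel sets by \emph{almost-everywhere approximation} via the regularity of $\mu_G$, rather than by $\sigma$-algebra generation, which is precisely the mechanism that fails in non-separable $K$.
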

\begin{proof}
Since the integral is linear, it suffices to treat the case $f=\chi_B$ for a Borel set $B\subseteq K$.  By outer regularity of $\mu_G$, for each $n\in\mathbb{N}$ there exists an open set $U_n=\bigcup_{k=1}^\infty I_{k}^n$
which is a countable union of disjoint open intervals such that $B\subseteq U_n$, and satisfies
\[\mu_G\bigl(U_n\setminus B\bigr)<\tfrac1n.\]
Truncating to finitely many intervals, choose $N_n$ so that $J_n = \bigcup_{k=1}^{N_n} I_{k}^n$
satisfies $\mu_G\bigl(B\triangle J_n\bigr)<\tfrac1n$, and define the step function $\varphi_n = \chi_{J_n}$.  Then
\[\int_K|\chi_B-\varphi_n|\,d\mu_G = \mu_G(B\triangle J_n)<\tfrac1n.\]
By the Riesz--Fischer theorem (see \cite[Section~7.3]{royden2010real}), there exists a subsequence $(\varphi_{n_k})_k$ converging pointwise $G$-almost everywhere to $\chi_B$.  Since $|\varphi_{n_k}|\le1$, Theorem~\ref{Thm:Dominated} (Dominated Convergence) implies
\[\int_K\chi_B\,dG = \lim_{k\to\infty} \int_K\varphi_{n_k}\,dG = \lim_{k\to\infty} \int_K\varphi_{n_k}\,d\mu_G = \int_K\chi_B\,d\mu_G.\]
This completes the proof. 
\end{proof}

\begin{remark}\label{Rem:ModKurz}
Under the hypotheses of the previous lemma, it follows that for every measurable function $ f:K\to\mathbb{R} $, if $ |f| $ is $ G $-integrable, then $ f $ is also $ G $-integrable. Indeed, we can fix a sequence $ (\varphi_n)_n $ of simple functions converging pointwise to $ f $, with $ -|f| \leq \varphi_n \leq |f| $ for every $ n\in\mathbb{N} $. Then, using Lemma~\ref{Lem:BorelCharMeasurable} and then applying Theorem~\ref{Thm:Dominated}, we deduce that $ f $ is $ G $-integrable.
\end{remark}

We are now in a position to establish the following characterization, which relates our notion of integration to Lebesgue integration with respect to Radon measures.

\begin{theorem}\label{Thm:CharacterizationGRadon}
Let $K$ be a compact line, and let $G: K \to \mathbb{R}$ be a non-decreasing, positive, and amenable function, and let $\mu_G\in\mathcal{M}(K)$ denote the Radon measure induced by $G$.  Then a measurable function $f\colon K\to\mathbb{R}$ is Lebesgue $\mu_G$-integrable if and only if both $f$ and $|f|$ are $G$-integrable.  In that case,
\[\int_K f\,dG \;=\; \int_K f\,d\mu_G.\]
\end{theorem}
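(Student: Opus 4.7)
The plan is to establish the forward implication first (Lebesgue $\mu_G$-integrability implies $G$-integrability of both $f$ and $|f|$, with matching integrals), and then to deduce the converse by bootstrapping from this. The key ingredient throughout is the monotone approximation of non-negative measurable functions by simple functions, which is the bridge between the two integration theories via Lemma \ref{Lem:BorelCharMeasurable}.

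For the forward direction, suppose $f$ is Lebesgue $\mu_G$-integrable. Then $|f|$ is as well. Fix an increasing sequence $(\varphi_n)_n$ of non-negative simple functions with $\varphi_n \nearrow |f|$ pointwise. By Lemma \ref{Lem:BorelCharMeasurable}, each $\varphi_n$ is $G$-integrable and $\int_K \varphi_n\, dG = \int_K \varphi_n\, d\mu_G$. The sequence $\bigl(\int_K \varphi_n\, d\mu_G\bigr)_n$ is monotone and bounded above by $\int_K |f|\, d\mu_G < \infty$, hence convergent. Theorem \ref{Thm:MCT} then yields that $|f|$ is $G$-integrable with
\[\int_K |f|\, dG = \lim_n \int_K \varphi_n\, dG = \int_K |f|\, d\mu_G.\]
By Remark \ref{Rem:ModKurz}, $f$ is also $G$-integrable. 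To obtain the equality of integrals for $f$ itself, apply the same argument separately to $f^+$ and $f^-$ (each measurable, non-negative, and dominated by $|f|$, hence Lebesgue $\mu_G$-integrable), then combine via the linearity established in Proposition \ref{Prop:BilinearityOfHKIntegral}.

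For the backward direction, suppose that both $f$ and $|f|$ are $G$-integrable. Again fix non-negative simple functions $\varphi_n \nearrow |f|$. Each $\varphi_n$ is $G$-integrable, and since $|f| - \varphi_n \geq 0$ is $G$-integrable (by linearity) with non-negative Riemann sums (because $G$ is nondecreasing), we have $\int_K \varphi_n\, dG \leq \int_K |f|\, dG$. Combining with Lemma \ref{Lem:BorelCharMeasurable} gives $\int_K \varphi_n\, d\mu_G \leq \int_K |f|\, dG$ for every $n$. The classical monotone convergence theorem for the Lebesgue integral then yields $\int_K |f|\, d\mu_G \leq \int_K |f|\, dG < \infty$, so $f$ is Lebesgue $\mu_G$-integrable. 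Applying the forward direction already proved, we conclude that $\int_K f\, dG = \int_K f\, d\mu_G$.

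The main point to watch is the monotonicity of the $G$-integral on non-negative functions, which is not stated explicitly earlier but follows painlessly from the fact that $G$ is nondecreasing: every Riemann sum of a non-negative function with respect to such $G$ is non-negative, so the limit is non-negative, and linearity then gives $\varphi_n \leq |f| \Rightarrow \int_K \varphi_n\, dG \leq \int_K |f|\, dG$ whenever both are $G$-integrable. Beyond this small verification, the proof is a routine interplay between the standard simple-function approximation scheme and the convergence theorems developed in this section.
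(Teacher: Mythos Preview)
Your proof is correct and follows essentially the same approach as the paper's: approximate by simple functions, pass between the two integrals via Lemma~\ref{Lem:BorelCharMeasurable}, and invoke the appropriate Monotone Convergence Theorem on each side. The only organizational differences are that the paper works directly with $f^+$ and $f^-$ from the outset (rather than treating $|f|$ first and then invoking Remark~\ref{Rem:ModKurz}), and in the converse it carries out the computation for $f^\pm$ explicitly rather than bootstrapping through the forward implication; your explicit isolation of the monotonicity step (nonnegative Riemann sums when $G$ is nondecreasing and positive) is a point the paper leaves implicit under ``arguing as in the previous case.''
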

\begin{proof}
 Suppose $f$ is $\mu_G$-integrable.  Then $f^+=\max\{f,0\}$ is also $\mu_G$-integrable and nonnegative.  By standard measure-theoretic approximation there exists a nondecreasing sequence of simple functions converging pointwise to $f^+$ with $0\le\varphi_n\le f^+$ for all $n$.  By Lemma~\ref{Lem:BorelCharMeasurable}, each $\varphi_n$ is $G$-integrable, and the Monotone Convergence Theorem~\ref{Thm:MCT} gives
\[\int_K f^+\,dG
=\lim_{n\to\infty}\int_K\varphi_n\,dG
=\lim_{n\to\infty}\int_K\varphi_n\,d\mu_G
=\int_K f^+\,d\mu_G.\]
An identical argument applies to $f^-=-\min\{f,0\}$, so $f=f^+-f^-$ and $|f|=f^++f^-$ are both $G$-integrable.

Conversely, assume that $f$ and $|f|$ are $G$-integrable. By Remark~\ref{Rem:ModKurz}, both $f^+$ and $f^-$ are then $G$-integrable. Let $(\varphi_n)_n$ be a nondecreasing sequence of simple functions converging pointwise to $f^+$, with $0 \leq \varphi_n \leq f^+$ for every $n$. Arguing as in the previous case, we find that the sequence $\left(\int_{K} \varphi_n\, dG\right)_n$ converges to $\int_{K} f^+\, dG$, and therefore to a real number. Moreover, by Lemma~\ref{Lem:BorelCharMeasurable}, we have $\int_{K} \varphi_n\, d\mu_G = \int_{K} \varphi_n\, dG
\quad \text{for every } n \in \mathbb{N}$.
Applying the Monotone Convergence Theorem for Radon measures, we conclude that $f^+$ is $\mu_G$-integrable and that
\[\int_{K} f^+\, d\mu_G = \int_{K} f^+\, dG.\]
Repeating the same argument for $f^-$, we obtain the desired result by the additivity of the integral.
\end{proof}

\section*{Appendix}

 Here we detail the comparison of our integral to the classic Kurzweil--Stieltjes integral on a real interval  $[a,b]$, as it was presented for example in \cite[Definition 6.2.2]{monteiro2019kurzweil}, 
and its generalization (due to Peterson and Thompson, see \cite{peterson2006henstock}) to time scales known as the Kurzweil--Stieltjes nabla (or $\nabla$) integral. It will suffice to compare with the Kurzweil--Stieltjes $\nabla$-integral on time scales, since it generalizes the former. We recall the basic terminology behind the definition. A time scale is a nonempty set of the form $[a,b]\cap\mathbb T$, where $\mathbb T$ is a closed subset of $\mathbb R$. We can assume that $\min [a,b]\cap\mathbb T=a$ and $\max [a,b]\cap\mathbb T = b$, and let us recall that for each subinterval $I\subset [a,b]$ we denote $I_\mathbb T = I\cap \mathbb T$. The \emph{backward jump operator} on $[a,b]_\mathbb T$  is the function $\rho : [a,b]_\mathbb T  \to [a,b]_\mathbb T$ defined by
$$
    \rho(x)=  \begin{cases}
        \sup\{y\in [a,b]_\mathbb T:y<x\}, &\text{if }x\in (a,b]_\mathbb T\\
        a,& \text{if } x=a. 
    \end{cases}
$$
A \emph{$\nabla$-gauge} on $[a,b]_\mathbb T$ is a pair of real functions $\gamma=(\gamma_L,\gamma_R)$ on $[a,b]_\mathbb T$ satisfying
\begin{itemize}
    \item for each $x\in [a,b)_\mathbb T$, $\gamma_R(x)>0$,
    \item for each $x\in (a,b]_\mathbb T$, $\gamma_L(x)>0$ and $\gamma_L(x)\geq x-\rho(x)$, and
    \item $\gamma_L(a)\geq 0$ and $\gamma_R(b)\geq 0$. 
\end{itemize}
 Given $\nabla$-gauge $\gamma= (\gamma_L,\gamma_R)$, a partition $P = \{([x_{i-1},x_i],t_i):1\leq i \leq n\}$ is said to be  $\gamma$-fine if $[x_{i-1},x_i]\subset [t_i-\gamma_L(t_i),t_i+\gamma_R(t_i)]$. A function $f: [a,b]_\mathbb T\to \mathbb R$ is said to be  Kurzweil--Stieltjes $\nabla$-integrable on $[a,b]_\mathbb T$ with respect to some $G:[a,b]_\mathbb T\to\mathbb R$ if there is an $A\in\mathbb R$ such that, for each $\varepsilon >0$ there is a $\nabla$-gauge $\gamma$ on $[a,b]_\mathbb T$ such that, for every $\gamma$-fine partition $P$ of $[a,b]_\mathbb T$, $|S_\mathrm{RS}(f,G,P) - A|<\varepsilon$, where $S_\mathrm{RS}(f,G,P)$ is the ``classic'' Riemann--Stieltjes sum without the summand $f(a)G(a)$, that is,
 $$
 S_\mathrm{RS}(f,G,P)= \sum_{i=1}^n f(t_i)(G(x_i)-G(x_{i-1})).
 $$
 In affirmative case, we denote $A=\int_a^bf(x)\,d\nabla G(x)$. The relationship between this integral and the one presented in this work, when restricted to a time scale setting, is established as follows:

\begin{prop*}
    Let $f,G:[a,b]_\mathbb T\to\mathbb R$. If $f$ is  Kurzweil--Stieltjes $\nabla$-integrable with respect to $G$, then it is $G$-integrable in the sense of Definition \ref{def:integrability} and moreover 
    $$
    \int_a^bf\,dG = f(a)G(a)+ \int_a^bf(x)\,d\nabla G(x).
    $$
    In particular, when $G(a)=0$, the integral presented in Definition \ref{def:integrability} generalizes the Kurzweil--Stieltjes $\nabla$-integral on time scales. 
\end{prop*}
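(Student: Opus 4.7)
The plan is to construct, from any $\nabla$-gauge $\gamma=(\gamma_L,\gamma_R)$ on $[a,b]_{\mathbb T}$, a gauge $\delta$ on the compact line $K=[a,b]_{\mathbb T}$ in the sense of Definition \ref{def:integrability}, with the property that every $\delta$-fine partition is automatically $\gamma$-fine. Once this comparison of partitions is in place, the two Riemann sums differ only by the constant term $f(a)G(a)$, and the statement will follow at once.

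Explicitly, I would set
\[
\delta(x)=
\begin{cases}
\{y\in K: y<a+\gamma_R(a)\}, & x=a,\\
\{y\in K: y>b-\gamma_L(b)\}, & x=b,\\
\{y\in K: x-\gamma_L(x)<y<x+\gamma_R(x)\}, & \text{otherwise.}
\end{cases}
\]
Each $\delta(x)$ is an open interval of $K$ containing $x$, using $\gamma_R(a)>0$ and $\gamma_L(b)>0$ from the $\nabla$-gauge axioms. For a $\delta$-fine partition $P=\{([x_{i-1},x_i],t_i):1\leq i\leq n\}$, the inclusion $(x_{i-1},x_i]\subset\delta(t_i)$ forces $x_{i-1}\geq t_i-\gamma_L(t_i)$ and $x_i<t_i+\gamma_R(t_i)$ for interior tags, and analogous inequalities at the endpoint tags $t_i=a$ or $t_i=b$; in every case $[x_{i-1},x_i]\subset[t_i-\gamma_L(t_i),t_i+\gamma_R(t_i)]$, so $P$ is $\gamma$-fine.

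With this in hand, set $A=\int_a^b f(x)\,d\nabla G(x)$. Given $\varepsilon>0$, I choose a $\nabla$-gauge $\gamma$ satisfying $|S_{\mathrm{RS}}(f,G,P)-A|<\varepsilon$ for every $\gamma$-fine $P$, build the associated $\delta$, and invoke Cousin's Lemma (Lemma \ref{lem:cousin}) to ensure $\delta$-fine partitions exist. For any such $P$, the identity
\[
S(f,G,P)=f(a)G(a)+S_{\mathrm{RS}}(f,G,P)
\]
immediately yields $|S(f,G,P)-(f(a)G(a)+A)|<\varepsilon$. Hence $f$ is $G$-integrable in the sense of Definition \ref{def:integrability} with $\int_a^b f\,dG=f(a)G(a)+A$, as required.

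The only delicate point is the bookkeeping of open versus closed endpoints: $\nabla$-fineness is a closed inclusion $[x_{i-1},x_i]\subset[t_i-\gamma_L(t_i),t_i+\gamma_R(t_i)]$, whereas our $\delta$-fineness is a half-open inclusion into an open set. I would handle this case by case, showing that the strict inequalities forced by $\delta$-fineness absorb the non-strict endpoints of the $\nabla$-condition, exploiting in particular that $\gamma_L(a)\geq 0$ and $\gamma_R(b)\geq 0$ are allowed to vanish precisely at the two points where the one-sided intervals in the definition of $\delta$ take over.
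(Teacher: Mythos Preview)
Your approach matches the paper's---build an interval gauge $\delta$ from the $\nabla$-gauge $\gamma$ and argue that every $\delta$-fine partition is $\gamma$-fine---but your particular choice of $\delta$ does not achieve this. The claim that $(x_{i-1},x_i]\subset\delta(t_i)$ forces $x_{i-1}\ge t_i-\gamma_L(t_i)$ is false: the half-open interval $(x_{i-1},x_i]$ imposes no constraint on $x_{i-1}$ itself, and in a time scale there may be no points of $K$ between $x_{i-1}$ and $t_i-\gamma_L(t_i)$ to witness a violation. Concretely, take $\mathbb T=\{0,1\}\cup[2,3]$ with $a=0$, $b=3$, and consider the tag $t_i=2$. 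Since $\rho(2)=1$, the choice $\gamma_L(2)=1.5$ is admissible; set $\gamma_R(2)=0.5$. Your gauge gives
\[
\delta(2)=\{y\in\mathbb T:0.5<y<2.5\}=\{1\}\cup[2,2.5),
\]
and the component $([0,2],2)$ is $\delta$-fine because $(0,2]_{\mathbb T}=\{1,2\}\subset\delta(2)$. Yet $x_{i-1}=0<0.5=t_i-\gamma_L(t_i)$, so this component is not $\gamma$-fine.

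The paper addresses this by first \emph{shrinking}: it selects $r(x)>0$ with $x-\gamma_L(x)<x-r(x)$ and $x+r(x)<x+\gamma_R(x)$, and sets $\delta(x)=(x-r(x),x+r(x))\cap\mathbb T$. The strict shrinkage is precisely what converts the half-open containment into the closed one required by $\gamma$-fineness. Your closing paragraph misdiagnoses the difficulty as a matter of strict versus non-strict endpoint inequalities; the real obstruction is that $x_{i-1}$ never enters the inclusion $(x_{i-1},x_i]\subset\delta(t_i)$ at all, so no case analysis of that inclusion can bound $x_{i-1}$ from below without an additional idea.
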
    
\begin{proof}
    Suppose that $f$ is Kurzweil--Stieltjes $\nabla$-integrable with $\int_a^bf(x)\,d\nabla x=A$, fix $\varepsilon>0$, and let $\gamma$ be a $\nabla$-gauge such that $P\ll\gamma\Rightarrow |S_{\mathrm{RS}}(f,G,P)-A|<\varepsilon$. For each $x\in (a,b)_\mathbb T$, choose $r(x)>0$ such that
    $$
    x-\gamma_L(x)<x-r(x)<x<x+r(x)<x+\gamma_R(x).
    $$
    Fix also $r(a)$ and $r(b)$ such that $a<a+r(a)<a+\gamma_R(a)$ and $b-\gamma_L(b)<b-r(b)<b$. Consider the interval gauge on $[a,b]_\mathbb T$ given by $\delta(x)=(x-r(x),x+r(x))\cap [a,b]_\mathbb T$. Given an arbitrary $\delta$-fine partition $P=\{([x_{i-1},x_i],t_i):1\leq i \leq n\}$, note that $(x_{i-1},x_i]_{\mathbb{T}} \subset (x-r(x),x+r(x))_{\mathbb T}$ implies that $[x_{i-1},x_i] \subset [x-\gamma_L(x),x+\gamma_R(x)]_{\mathbb{T}}$ for each $i\in\{1,\dots,n\}$. Thus $P\ll \gamma$, from which
    $$
    |S(f,G,P)-(f(a)G(a) + A)| = |S_\mathrm{RS}(f,G,P)+f(a)G(a)-(f(a)G(a) + A)|<\varepsilon,
    $$
    which concludes the proof. 
\end{proof}

\bibliographystyle{siam}
\bibliography{references}

\end{document}